\newtheorem{lemma}{Lemma}
\newtheorem{theorem}{Theorem}
\newtheorem{proposition}{Proposition}
\newtheorem{corollary}{Corollary}
\newtheorem{definition}{Definition}
\newtheorem{remark}{Remark}
\newtheorem{example}{Example}
\newcommand{\nth}[1]{$#1 {\rm - th }$}
\newcommand{\rad}{{\rm rad } }
\newcommand{\Rad}{{\rm Rad } }
\newcommand{\sexp}{{\rm sexp} }
\newcommand{\ord}{{\rm ord} }
\newcommand{\rk}{{\rm rank } \ }
\newcommand{\zprk}{\Z_p\hbox{-rk}}
\newcommand{\Z}{\mathbb{Z}}
\newcommand{\ZM}[1]{\Z /( #1 \cdot \Z)}
\newcommand{\diag}{{\bf Diag}}
\newcommand{\rg}[1]{\mbox{\bf #1}}
\newcommand{\eu}[1]{\mathfrak{#1}}
\newcommand{\id}[1]{\mathcal{#1}}
\newcommand{\Gal}{\mbox{ Gal }}
\newcommand{\Gals}{\hbox{\tiny Gal}}
\newcommand{\Ker}{\mbox{ Ker }}
\newcommand{\prk}{p{\rm -rk } }
\newcommand{\rf}[1]{(\ref{#1})}
\newcommand{\Norm}{\mbox{\bf N}}
\newcommand{\lchooses}[2]{\left( \frac{#1}{#2 } \right)} 
\newcommand{\Mat}{\hbox{Mat}}
\newcommand{\F}{\mathbb{F}}
\newcommand{\U}{\mathbb{U}}
\newcommand{\B}{\mathbb{B}}
\newcommand{\K}{\mathbb{K}}
\newcommand{\KH}{\mathbb{H}}
\newcommand{\KL}{\mathbb{L}}
\newcommand{\M}{\mathbb{M}}
\newcommand{\T}{\mathbb{T}}
\newcommand{\Q}{\mathbb{Q}}
\newcommand{\C}{\mathbb{C}}
\newcommand{\N}{\mathbb{N}}
\newcommand{\pn}{^{1/p^{n}}}
\def\ra{\rightarrow}
\def\tj{\tilde{\jmath}}
\def\tt{\tilde{\tau}}
 \newcommand{\ran}{\rangle}
\newcommand{\lan}{\langle}
\begin{document}
{\obeylines \small
\vspace*{-1.0cm}
\hspace*{3.5cm}One by one the guests arrive
\hspace*{3.5cm}The guests are coming through
\hspace*{3.5cm}And ``Welcome, welcome'' cries a voice
\hspace*{3.5cm}``Let all my guests come in!''\footnote{Leonard Cohen: {\em The Guests}.}.
\vspace*{0.4cm}
\hspace*{5.0cm} {\it To S. J. Patterson, at his $60$ - th birthday}
\vspace*{0.5cm}
\smallskip
}
\title[SNOQIT I] {SNOQIT I\footnote{SNOQIT=``Seminar Notes on Open Questions
in Iwasawa Theory'' refers to a seminar held together with J.S.Patterson in
2007/08.}: Growth of $\Lambda$-modules and Kummer theory} \author{Preda
Mih\u{a}ilescu} \address[P. Mih\u{a}ilescu]{Mathematisches Institut der
Universit\"at G\"ottingen} \email[P. Mih\u{a}ilescu]{preda@uni-math.gwdg.de}
\thanks{The first results of this paper were developed before the year $2008$,
while the author was sponsored by a grant of the Volkswagen Foundation. Some
of the central questions were first considered during the month of February
$2010$, spent as a guest professor in Caen, at the invitation of Bruno
Angl\`es: these led also to the more recent developments.}

\date{Version 2.10 \today}
\vspace{1.0cm}
%\huge
\begin{abstract}
Let $A = \varprojlim_n$ be the projective limit of the $p$-parts of the class
groups in some $\Z_p$-cyclotomic extension. The main purpose of this paper is
to investigate the transition $\Lambda a_n \ra \Lambda a_{n+1}$ for some
special $a = (a_n)_{n \in \N} \in A$, of infinite order. Using an analysis of
the $\F_p[ T ]$-modules $\id{A}_n/p \id{A}_n$ and $\id{A}_n[ p ]$ we deduce
some restrictive conditions on the structure and rank of these modules.  Our
model can be applied also to a broader variety of cyclic $p$-extensions and
associated modules. In particular, it applies to certain cases of subfields of
Hilbert or Takagi class fields, i.e. finite cyclic extensions.

As a consequence of this taxonomy (The term of \textit{taxonometric} research
was coined by Samuel Patterson; it very well applies to this work and is part
of the dedication at the occasion of his 60-th birthday) we can give a proof
in CM fields of the conjecture of Gross concerning the non vanishing of the
$p$-adic regulator of $p$-units.
\end{abstract}
\maketitle
\tableofcontents

\section{Introduction}
Let $p$ be an odd prime and $\K \supset \Q[ \zeta ]$ be a galois extension
containing the \nth{p} roots of unity, while $(\K_n)_{n \in \N}$ are the
intermediate fields of its cyclotomic $\Z_p$-extension $\K_{\infty}$. Let $A_n
= (\id{C}(\K_n))_p$ be the $p$-parts of the ideal class groups of $\K_n$ and
$\rg{A} = \varprojlim_n A_n$ be their projective limit. The subgroups
$\rg{B}_n \subset A_n$ are generated by the classes containing ramified primes
above $p$ and we let 
\begin{eqnarray}
\label{bram}
A'_n & = & A_n/\rg{B}_n, \\
\rg{B} & = & \varprojlim_n \rg{B}_n, \quad \rg{A}' = \rg{A}/\rg{B}.\nonumber
\end{eqnarray}
We denote as usual the galois group $\Gamma = \Gal(\K_{\infty}/\K)$ and
$\Lambda = \Z_p[ \Gamma ] \cong \Z_p[[ \tau ]] \cong \Z_p[[ T ]]$, where $\tau
\in \Gamma$ is a topological generator and $T = \tau-1$; we let
\[ \omega_n = (T+1)^{p^{n-1}} - 1 \in \Lambda, \quad 
\nu_{n+1,n} = \omega_{n+1}/\omega_n \in \Lambda. \]
If $X$ is a finite abelian group, we denote by $X_p$ its $p$ - Sylow
group. The exponent of $X_p$ is the smallest power of $p$ that annihilates
$X_p$; the \textit{subexponent}
\[ \sexp(X_p) = \min \{ \ \ord(x) \ : \ x \in X_p \setminus X_p^p \ \} . \]

Fukuda proves in \cite{Fu} (see also Lemma \ref{fukuda} below) that if
$\mu(\K) = 0$, then there for the least $n_0 \geq 0$ such that
$\prk(A_{n_0+1}) = \prk(A_{n_0})$ we also have $\prk(\rg{A}) = prk(A_{n_0})$:
the $p$-rank of $A_n$ becomes stationary after the first occurance of a
stationary rank. It is a general property of finitely generated
$\Lambda$-modules of finite $p$-rank, that their $p$-rank must become
stationary after some fixed level -- the additional fact that this already
happens after the first rank stabilization is a consequence of an early
theorem of Iwasawa (see Theorem \ref{iw6} below) which relates the
$\Lambda$-module $\rg{A}$ to class field theory. The theorem has a class field
theoretical proof and one can show that the properties it reveals are not
shared by arbitrary finitely generated $\Lambda$-modules.

The purpose of this paper is to pursue Fukuda's observation at the level of
individual cyclic $\Lambda$-modules and also investigate the
\textit{prestable} segment of these modules. We do this under some
simplifying conditions and focus on specific cyclic $\Lambda$-modules defined
as follows:
\begin{definition}
\label{dconic}
Let $\K$ be a CM field and $a = (a_n)_{n \in \N} \in
\rg{A}^-$ have infinite order. 
We say that $a$ is {\em conic}\footnote{One can easily provide
examples of non conic elements, by considering $\K_{\infty}$ as a
$\Z_p$-extension of $\K_n$ for some $n > 1$. It is an interesting question to
find some conditions related only to the field $\K$, which assure the
existence of conic elements.} if the following conditions are fulfilled:
\begin{itemize}
\item[ 1. ] There is a $\Lambda$-submodule $\rg{C} \subset \rg{A}^-$ such that 
\[ \rg{A}^- = \rg{C} \oplus \Lambda a. \]
We say in this case that $\Lambda a$ is $\Lambda$-complementable.
\item[ 2. ] Let $c = (c_n)_{n \in \N} \in \Lambda a$. If $c_n = 1$ for some $n
> 0$, then $c \in \omega_{n} (\Lambda a)$.  
\item[ 3. ] If $b \in \rg{A}^-$ and there is a power $q = p^k$ with $b^q \in
\Lambda a$, then $b \in \Lambda a$.
\item[ 4. ] If $f_a(T) \in \Z_p[ T ]$ is the exact annihilator of $\Lambda a$,
  then $(f_a(T), \omega_n(T)) = 1$ for all $n > 0$.
\end{itemize}
\end{definition}
The above definition is slightly redundant, containing all the properties that
we shall require. See also \S 2.1 for a more detailed discussion of the
definition. 

The first purpose of this paper is to prove the following theorem.
\begin{theorem}
\label{main}
Let $p$ be an odd prime, $\K$ be a galois CM extension containing a
\nth{p} root of units and let $ \K_n, A_n$ and $\rg{A}$ be defined
like above. Let $a = (a_n)_{n \in \N} \in \rg{A}^- \setminus
(\rg{A}^-)^p$ be conic, $q = \ord(a_1)$ and let $f_a(T)$ be the exact
annihilator polynomial of $a$. If $q = p$, then $f_a(T)$ is an
Eisenstein polynomial. Otherwise, if $n_0$ is the least integer with
$\prk(\id{A}_{n_0}) = \prk(\id{A}_{n_0+1})$ then either $n_0 \leq 3$
and the rank is bounded by
\begin{eqnarray}
\label{bound}
v_p(a_1) > 1 \quad \Rightarrow \quad \prk(\Lambda a) < p(p-1),
\end{eqnarray}
or $n_0 > 3$ and $\sexp(\id{A}_{n_0-1}) = \exp(\id{A}_{n_0-1})$.
Moreover $\id{A}_{n_0}$ has an annihilator polynomial 
\[ f_{n_0}(T) = T^{\lambda} - q w(T), \quad \lambda = \prk(\id{A}), \quad w(T)
\in (\Z_p[ T ])^{\times}, \]
and $f_a(T) - f_{n_0}(T) \in a_{n_0}^{\top} = \{ x \in Z_p[ T ] : a_{n_0}^x =
1\}$.
\end{theorem}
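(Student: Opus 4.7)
The plan is to extract the Weierstrass shape of $f_a$ from the conic hypotheses, combine this with Fukuda's theorem and the $\Lambda$-splitting from conic property (1) to control $n_0$, and then analyze the cyclic module $\id{A}_{n_0} = \Lambda/(f_a, \omega_{n_0})$ to produce the normal form $T^\lambda - qw$ and the dichotomy.

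First I would pin down $f_a(0)$. Applying conic property (2) to $a^q$, whose first component $a_1^q = 1$ is trivial, yields $a^q \in \omega_1 \Lambda a = T\Lambda a$, so $q \in (T) + (f_a)$ in $\Lambda$; evaluating at $T = 0$ gives $f_a(0) \mid q$. Conversely, $a_1^{q/p} \neq 1$ says $a^{q/p} \notin T\Lambda a$, i.e.\ $q/p \notin (T) + (f_a)$; combined with the identity $((T) + (f_a)) \cap \Z_p = f_a(0)\Z_p$ this forces $v_p(f_a(0)) \geq s$, where $q = p^s$. Hence $f_a(0) = qu$ with $u \in \Z_p^\times$. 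Because $f_a$ is distinguished, $\bar f_a(T) = T^\lambda$ in $\F_p[T]$; if $q = p$ then $v_p(f_a(0)) = 1$, so $f_a$ is Eisenstein, settling the first assertion.

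Next I would compute $\prk(\Lambda a_n) = \dim_{\F_p}\F_p[[T]]/(T^\lambda, T^{p^{n-1}}) = \min(\lambda, p^{n-1})$, so the first stabilization $n_0$ satisfies $p^{n_0 - 2} < \lambda \leq p^{n_0 - 1}$. The conic splitting $\rg{A}^- = \rg{C} \oplus \Lambda a$, being a $\Lambda$-module decomposition, descends to $A_n^- = \rg{C}_n \oplus \Lambda a_n$; Fukuda's lemma applied to $\rg{A}^-$, together with lower semicontinuity of $p$-rank on each summand, then forces $\prk(\Lambda a_{n_0}) = \prk(\Lambda a) = \lambda$. For the normal form, write $f_a(T) = T^\lambda + M(T) + qu$ with $M(T) = \sum_{i=1}^{\lambda - 1} c_i T^i$, $c_i \in p\Z_p$. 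Finding $w \in \Z_p^\times$ with $T^\lambda - qw \in (f_a, \omega_{n_0})$ is equivalent to producing $M(T) + q(u + w) \in (f_a, \omega_{n_0})$; I would identify the constant component of $M(T) a_{n_0}$ inside $\Lambda a_{n_0}$ (spanned over $\Z_p$ by the $T^i a_{n_0}$ for $0 \leq i < \lambda$), use $M \in p\Lambda$ together with $f_a \cdot a_{n_0} = 0$ to locate it in $q\Z_p$, and choose $w$ accordingly. The congruence $f_a - f_{n_0} \in \ann(a_{n_0})$ is then automatic since both polynomials annihilate $a_{n_0}$.

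The main content is the dichotomy for $q > p$. From $p^{n_0 - 2} < \lambda \leq p^{n_0 - 1}$, the case $n_0 \leq 3$ is equivalent to $\lambda \leq p^2$. In the small-rank case, I would sharpen $\lambda \leq p^2$ to $\lambda < p(p-1)$ by exploiting $s \geq 2$: the absorbability of the middle coefficients of $f_a$ against $\omega_{n_0} = p^{n_0-1}T + \binom{p^{n_0-1}}{2}T^2 + \cdots$ breaks at the threshold controlled by $\binom{p}{2} = p(p-1)/2$, the first coefficient past the linear term of $\omega_2$. In the large-rank case $n_0 > 3$, I would establish $\sexp(\id{A}_{n_0-1}) = \exp(\id{A}_{n_0-1})$ by showing that every element of $\Lambda a_{n_0-1}$ not in its $p$-th powers is, modulo $p$-th powers, of the form $T^j a_{n_0-1}$, and that the $T$-orbit of $a_{n_0-1}$ is order-homogeneous with common order equal to the exponent. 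The principal obstacle will be the sharp bound $p(p-1)$ in the small-rank case, whose proof requires delicate control of how the middle $T$-powers of $a_{n_0}$ interact with the binomial expansion of $\omega_{n_0}$ under the constraint $s \geq 2$.
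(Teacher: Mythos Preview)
Your opening moves are correct and in fact cleaner than the paper's. The identification $\id{A}_n \cong \Lambda/(f_a,\omega_n)$ via conic property~(2), the computation $\prk(\id{A}_n)=\min(\lambda,p^{n-1})$, and the determination $p^{n_0-2}<\lambda\le p^{n_0-1}$ are all valid; the paper reaches the same endpoint through its transition machinery (Corollary~\ref{termt} and the regular/terminal classification) rather than by this one-line reduction. Your treatment of $f_a(0)$ and the Eisenstein case is essentially Lemma~\ref{val0}.

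The genuine gap is everything after that. The quantities $\sexp$, $\exp$, and the bound $p(p-1)$ are statements about the \emph{abelian-group} structure of $\id{A}_n$ (elementary divisors), not about its $p$-rank. Your isomorphism $\id{A}_n\cong\Lambda/(f_a,\omega_n)$ gives the rank for free, but extracting the orders of the individual $T^i a_n$ from it is exactly the hard problem, and this is what the paper's Chapter~2 is for. Concretely: the bound $\lambda<p(p-1)$ does not come from any threshold involving $\binom{p}{2}$; it comes from Lemma~\ref{hasroot}, which shows that once $\iota(a)\in pB$ with $\ord(a)>p$ the socle $S(B)$ coincides with the norm kernel $K$, forcing $r'\le(p-1)d$. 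That lemma is a delicate comparison of $S(B)$, $K$, and $\iota(A)$ inside $B$, and your sketch offers no substitute for it. Likewise, the flatness assertion for $n_0>3$ is Lemma~\ref{lregind}, which rests on Lemmata~\ref{wild} and~\ref{finflat}: a wild or exponent-increasing transition is shown to be terminal within one further step, so four or more growth steps force every intermediate module to be flat. Your proposal to show ``the $T$-orbit of $a_{n_0-1}$ is order-homogeneous'' is a restatement of $\sexp=\exp$, not an argument for it. The binomial form $f_{n_0}(T)=T^\lambda-qw(T)$ is obtained in the paper case-by-case from those same lemmata (see the end of Lemma~\ref{wild} and Lemma~\ref{finflat}); your plan to absorb $M(T)a_{n_0}$ into $q\Z_p\cdot a_{n_0}$ presupposes exactly the order information you have not established.

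In short: your direct route handles the rank and the constant term elegantly, but the dichotomy and the binomial shape are theorems about exponents, and for those you need either the paper's socle/kernel analysis of transitions or an equivalent replacement, which the proposal does not supply.
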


The theorem is obtained by a tedious algebraic analysis of the rank
growth in the \textit{transitions} $\id{A}_n \hookrightarrow
\id{A}_{n+1}$. 

A class of examples of conic modules is encountered for quadratic ground
fields $\K$, such that the $p$-part $A_1(\K)$ of the class group is
$\Z_p$-cyclic. We shall give in section 3.2 a series of such examples, drawn
from the computations of Ernvall and Mets\"ankil\"a in \cite{EM}. A further
series of applications concern the structure of the components $e_{p-2k}
\rg{A}$ of the class group of $p$-cyclotomic extensions, when the Bernoulli
number $B_{2k} \equiv 0 \bmod p$. If the conjecture of Kummer - Vandiver or
the cyclicity conjecture holds for this component, then the respective modules
are conic.

The question about the detailed structure of annihilator polynomials in
Iwasawa extensions is a difficult one and it has been investigated in a series
of papers in the literature. For small, e.g. quadratic fields, a probabilistic
approach yields already satisfactory results. In this respect, the
Cohen-Lenstra \cite{CL} and Cohen-Martinet \cite{CM} heuristics have imposed
themselves, being confirmed by a large amount of empirical results; see also
Bhargava's use of these heuristics in \cite{Ba} for recent developments.

At the other end, for instance in $p$-cyclotomic fields, computations only
revealed linear annihilator polynomials. In spite of the improved resources of
modern computers, it is probably still infeasible to pursue intensive numeric
investigations for larger base fields. In this respect, we understand the
present paper as a proposal for a new, intermediate approach between empirical
computations and general proofs: {\em empirical case distinctions} leading to
some structural evidence. In this sense, the conditions on conic elements are
chosen such that some structural results can be achieved with feasible
effort. The results indicate that for large base fields, the repartitions of
exact annihilators of elements of $\rg{A}^-$ can be expected to be quite
structured and far from uniform repartition within all possible distinguished
polynomials.
\subsection{Notations}
We shall fix some notations. The field $\K$ is assumed to be a CM galois
extension of $\Q$ with group $\Delta$, containing a \nth{p} root of unity
$\zeta$ but no \nth{p^2} roots of unity. We let $(\zeta_{p^n})_{n \in \N}$ be
a norm coherent sequence of \nth{p^n} roots of unity, so $\K_n = \K[
\zeta_{p^n} ]$. Thus we shall number the intermediate extensions of
$\K_{\infty}$ by $\K_1 = \K, \K_n = \K[ \zeta_{p^n} ]$. We have uniformly that $\K_n$ contains the \nth{p^n} but not the
\nth{p^{n+1}} roots of unity. In our numbering, $\omega_n$ annihilates
$\K_n^{\times}$ and all the groups related to $\K_n$ ($A_n, \id{O}(\K_n)$,
etc.)

Let $A = \id{C}(\K)_p$, the $p$ - Sylow subgroup of the class group
$\id{C}(\K)$. The $p$-parts of the class groups of $\K_n$ are denoted by $A_n$
and they form a projective sequence with respect to the norms $N_{m,n} :=
\Norm_{\K_m/\K_n}, m > n > 0$, which are assumed to be surjective. The
projective limit is denoted by $\rg{A} = \varprojlim_n A_n$. The submodule
$\rg{B} \subset \rg{A}$ is defined by \rf{bram} and $\rg{A}' =
\rg{A}/\rg{B}$. At finite levels $A'_n = A_n/\rg{B}_n$ is isomorphic to the
ideal class group of the ring of the $p$-units in $\K_n$. The maximal
$p$-abelian unramified extension of $\K_n$ is $\KH_n$ and $\KH'_n \subset
\KH_n$ is the maximal subfield that splits all the primes above $p$. Then
$\Gal(\KH'_n/\K_n) \cong A'_n$ (e.g. \cite{Iw}, \S 3. - 4.).

If the coherent sequence $a = (a_n)_{n \in \N} \in \rg{A}^-$ is a conic
element, then $\prk(\Lambda a) < \infty$. We write $\id{A} = \Lambda a$ and
$\id{A}_n = \Lambda a_n$: the finite groups $\id{A}_n$ form a projective
sequence of $\Lambda$-modules with respect to the norms. The exact annihilator
polynomial of $\id{A}$ is denoted by $f_a(T) \in \Z_p[ T ]$.

If $f \in \Z_p[ T ]$ is some distinguished polynomial that divides the
characteristic polynomial of $\rg{A}$, we let $\rg{A}(f) = \cup_n \rg{A}[ f^n
]$ be the union of all power $f$-torsions in $\rg{A}$. Since $\rg{A}$ is
finitely generated, this is the maximal submodule annihilated by some power of
$f$. If $B \subset \rg{A}(f)$ is some $\Lambda$-module, then we let $k =
\ord_f(B)$ be the least integer such that $f^k B = 0$.

\subsection{List of symbols}
We give here a list of the notations introduced below in connection with
Iwasawa theory
\begin{eqnarray*}
\begin{array}{l c l}
p & & \hbox{A rational prime}, \\ 
\zeta_{p^n} & & \hbox{Primitive \nth{p^n}
roots of unity with $\zeta_{p^n}^p = \zeta_{p^{n-1}}$ for all $n > 0$.},\\
\mu_{p^n} &  & \{ \zeta_{p^n}^k, k \in \N \}, \\
\K &  & \hbox{ A galois CM extension of $\Q$ containing the \nth{p} roots of unity}, \\
\K_{\infty}, \K_{n}&  & \hbox{The cyclotomic $\Z_p$ - extension of $\K$, and intermediate fields,}\\
\Delta &  & \Gal(\K/\Q), \\
A(\rg{K}) & = & \hbox{$p$-part of the ideal class group of the field $\rg{K}$}, \\
s &  & \hbox{The number of primes above $p$ in $\K$}, \\
\Gamma &  & \Gal( \K_{\infty}/\K ) = \Z_p \tau, \quad \hbox{$\tau$ a
  topological generator of $\Gamma$} \\
T & = & \tau -1,\\ 
* &  & \hbox{Iwasawa's involution on $\Lambda$ induced by $T^* = (p-T)/(T+1)$},\\
\end{array}
\end{eqnarray*}
\begin{eqnarray*}
\begin{array}{l c l}
% A(\rg{K}) & = & \hbox{$p$-part of the ideal class group of the field $\rg{K}$}, \\
% s &  & \hbox{The number of primes above $p$ in $\K$}, \\
% \Gamma &  & \Gal( \K_{\infty}/\K ) = \Z_p \tau, \quad \hbox{$\tau$ a
%   topological generator of $\Gamma$} \\
% T & = & \tau -1, \\
\Lambda &  & \Z_p[[ T ]], \quad \Lambda_n = \Lambda/(\omega_n \Lambda), \\
\omega_n &  & (T+1)^{p^{n-1}} - 1, \quad (\K_n^{\times})^{\omega_n} = \{ 1 \}, \\
A'_n = A'(\K_n) &  & \hbox{The $p$ - part of the ideal class group of the $p$ - integers of $\K_n$}, \\
A' & = & \varprojlim A'_n, \\
\rg{B} & = & \lan \{ b = (b_n)_{n \in \N} \in A : b_n = [ \wp_n ], \wp_n \supset (p) \} \ran_{\Z_p}, \\
\KH_{\infty} &  & \hbox{The maximal $p$ - abelian unramified extension of $\K_{\infty}$},\\
\KH'_{\infty} \subset \KH_{\infty} &  & \hbox{The maximal subextension of $\KH_{\infty}$ that splits the primes above $p$}.
\end{array}
\end{eqnarray*}
The following notations are specific for transitions:
\begin{eqnarray*}
\begin{array}{l c l}
(A, B) & = & \hbox{A conic transition, $A, B$ are finite $\Z_p[ T ]$-modules}, \\
G & = & < \tau > , \hbox{a cyclic $p$-group acting on the modules of the transition}, \\
T & = &  \tau - 1, \\
S(X) & = & X[ p ], \ \hbox{the $p$-torsion of the $p$ group $X$, or its socle}, \\
R(X) & = & X/(p X), \ \hbox{the ``roof'' of the $p$ group $X$}, \\
N, \iota & = & \hbox{The norm and the lift associated to the transition $(A,B)$}, \\
K & = & \Ker(N : B \ra A), \\
\omega & = & \hbox{Annihilator of $A$, such that $N = p + \omega N'$}, \\
d & = & \deg(\omega(T)); \quad \nu = \frac{(\omega+1)^p-1}{\omega}, \\
\nu \omega & = & \hbox{Annihilator of $B$}, \\
\id{T} & = & B/\iota(A), \ \hbox{The transition module associated to $(A, B)$}, \\
s, s' & = & \hbox{Generators of $S(A), S(B)$ as $\F_p[ T ]$-modules}, \\
a, b & = & \hbox{Generators of $A, B$ as $\Z_p[ T ]$-modules}, \\
r, r' & = & \prk(A), \prk(B).
\end{array}
\end{eqnarray*}

\subsection{Ramification and its applications}
Iwasawa's Theorem 6 \cite{Iw} plays a central role in our investigations. Let
us recall the statement of this theorem in our context (see also \cite{Wa},
Lemma 13.14 and 13.15 and \cite{La}, Chapter 5, Theorem 4.2):
\begin{theorem}[Iwasawa, Theorem 6 \cite{Iw}]
\label{iw6}
Let $\K$ be a number field and $P = \{\wp_i : i = 1, 2, \ldots, s\}$ be the
primes of $\K$ above $p$ and assume that they ramify completely in
$\K_{\infty}/\K$. Let $\KH_{\infty}/\K_{\infty}$ be the maximal $p$-abelian
unramified extension of $\K_{\infty}$ and $H =
\Gal(\KH_{\infty}/\K)$, while $I_i \subset H, i = 1, 2, \ldots, s$
are the inertia groups of some primes of $\KH_{\infty}$ above $\wp_i$. Let
$a_i \in \rg{A}$ be such that $\varphi(\sigma_i) I_1 = I_i$, for $i = 2,
3, \ldots, s$.

For every $n > 0$ there is a module $Y_n = \omega_n \rg{A} \cdot
\left[a_2, \ldots, a_s \right]_{\Z_p}$ such that $\rg{A}/Y_n \cong
\rg{A}_n$.
\end{theorem}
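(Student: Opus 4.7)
The plan is to identify $\rg{A}$ with $\Gal(\KH_\infty/\K_\infty)$ by Artin reciprocity and then to realise the surjection $\rg{A} \twoheadrightarrow A_n$ as a restriction map whose kernel is shown to equal $Y_n$. The restriction along $\KH_n \K_\infty / \K_\infty$ combined with the classical equality $\KH_n \cap \K_\infty = \K_n$ --- which follows because at least one prime above $p$ is totally ramified in $\K_\infty/\K$, while $\KH_n/\K_n$ is everywhere unramified --- yields $\rg{A}/Y_n^* \cong A_n$ for $Y_n^* := \Gal(\KH_\infty / \K_\infty \KH_n)$. The theorem then reduces to the identification $Y_n^* = Y_n$.

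The main structural input is an analysis of the inertia groups $I_i \subset H := \Gal(\KH_\infty/\K)$ above the $\wp_i$. Because $\KH_\infty/\K_\infty$ is unramified while $\K_\infty/\K$ is totally ramified at each $\wp_i$, the projection $H \to \Gamma$ restricts to an isomorphism $I_i \cong \Gamma$, so every $I_i$ is pro-$p$-cyclic with a distinguished generator $\tilde\tau_i$ mapping to $\tau$. Writing $\tilde\tau_i = \tilde\tau_1 \cdot a_i$ for uniquely determined $a_i \in \rg{A}$ recovers the elements $a_i$ appearing in the statement. Inside $\KH_\infty$, the composite $\K_\infty \KH_n$ is then the maximal subfield $M \supset \K_\infty$ for which $M/\K_n$ is abelian and the images in $\Gal(M/\K_n)$ of all $I_i$ coincide --- they collapse to $\Gal(M/\KH_n) \cong \Gal(\K_\infty/\K_n)$, the common inertia. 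Translating these two conditions into subgroup generators in $H$ yields first the commutator $[\tilde\tau^{p^{n-1}}, \rg{A}] = \omega_n \rg{A}$, and second the \emph{inertia differences} $\tilde\tau_i^{p^{n-1}} \tilde\tau_1^{-p^{n-1}}$ for $i \geq 2$, as the two kinds of generators of $Y_n^*$ to be made explicit in $\rg{A}$.

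The main technical step is the computation of these inertia differences. Expanding $(\tilde\tau_1 a_i)^{p^{n-1}}$ using the relation $\tilde\tau_1 x \tilde\tau_1^{-1} = \tau(x)$ for $x \in \rg{A}$ yields a telescoping sum in the $a_i$, which upon reducing modulo $\omega_n \rg{A}$ provides the generators to be added to $\omega_n \rg{A}$ in order to describe $Y_n^*$. The principal obstacle lies in this reduction: the naive expansion produces a factor $\nu_n(T) = \omega_n/T$ multiplying $a_i$ rather than the bare $a_i$, and one must verify that the $\Lambda$-closure of these twisted generators together with $\omega_n \rg{A}$ gives exactly $\omega_n \rg{A} + [a_2, \ldots, a_s]_{\Z_p}$. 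It is this identification --- the only place where one uses not just the cyclicity of each $I_i$ but the uniformity $I_i \cong \Gamma$ across all $i$ --- that carries the Iwasawa-theoretic content of the theorem; the rest is formal Galois theory.
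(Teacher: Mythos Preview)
The paper does not give its own proof of this theorem: it is quoted from Iwasawa's original paper, with pointers to Washington (Lemmas~13.14--13.15) and Lang (Chapter~5, Theorem~4.2). Your outline is exactly the standard argument found in those references: identify $\rg{A}$ with $X = \Gal(\KH_\infty/\K_\infty)$, recognise $\K_\infty \KH_n$ as the maximal subfield of $\KH_\infty$ abelian over $\K_n$, compute the commutator $[G_n,G_n] = \omega_n X$, and express the inertia differences $\tilde\tau_i^{\,p^{n-1}} \tilde\tau_1^{-p^{n-1}}$ inside $X/\omega_n X$. Your observation that the expansion of $(\tilde\tau_1 a_i)^{p^{n-1}}$ produces the factor $\omega_n/T$ in front of $a_i$ is correct and is precisely the content of the classical computation.

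The gap is in your last step. The ``verification'' you propose --- that $\omega_n \rg{A}$ together with the $\Lambda$-span of the $(\omega_n/T)\,a_i$ coincides with $\omega_n \rg{A} + [a_2,\ldots,a_s]_{\Z_p}$ --- cannot be carried out, because it is false in general. The correct result of the computation is
\[
Y_n \;=\; \omega_n\,\rg{A} \;+\; \frac{\omega_n}{T}\,[a_2,\ldots,a_s]_{\Z_p}
\;=\; \frac{\omega_n}{T}\cdot Y_1,
\]
which is the formula in Washington and Lang. The expression $\omega_n \rg{A} + [a_2,\ldots,a_s]_{\Z_p}$ written in the paper's statement is not even a $\Lambda$-submodule for $n>1$ (there is no reason for $T a_i$ to lie in it), whereas $Y_n$, being normal in $\Gal(\KH_\infty/\K)$, must be one. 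So the discrepancy you noticed is an imprecision in the statement, not a difficulty to be overcome. For $n=1$ one has $\omega_1/T = 1$ and the two formulas agree; this is in fact the only case the paper actually uses (see the proof of Lemma~\ref{cons} and Example~\ref{e5}, where only $Y_1$ appears).
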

% It is our restrictive assumption that $a_i \in T \rg{A}$. Since the
% fields in which this holds will play further an important role, we introduce
% the
% \begin{definition}
% \label{diw6}
% Let $\K$ be a number field and $\K_{\infty}$ be its cyclotomic
% $\Z_p$-extension. We say that the (primes above in the) field $\K$ have
% \textit{simple ramification} if for all $\sigma_i, i = 2, 3, \ldots, s$
% defined in the Theorem \ref{iw6} we have $\sigma_i \in T \rg{A}$. The field
% has \textit{property F} if $T \sigma_i = 1$ for all $i$.
% 
% The module $\Pi := [ I_i : i = 1, 2, 3, \ldots ]_{\Z_p}$ is the
% canonic module of $\K$ spanned by the inertias of the primes above $p$ and
% \begin{eqnarray}
%  \label{ram}
% \Pi' = [ a_i : i = 2, 3, \ldots, s ]_{\Z_p} \subset \rg{A},
% \end{eqnarray}
% is its image in $\rg{A}$: then $\Pi$ is the \textit{ramification module} of
% $\K$ and $\Pi'$ is its image in the class group. The field has simple
% ramification, if $\Pi' \subset T \rg{A}$. The conjectures of Gross and Leopoldt are strongly related to the assumption $\Pi'(T) \sim \rg{A}[ T ]$.
% \end{definition}
Note that the context of the theorem is not restricted to
CM extensions. In fact Iwasawa's theorem applies also to non cyclotomic
$\Z_p$-extensions, but we shall not consider such extensions in this paper.

The following Theorem settles the question about $Y_1^-$ in CM extensions:
\begin{theorem}
\label{simram}
Let $\K$ be a galois CM extension of $\Q$ and $\rg{A}$ be defined
like above. Then $\rg{A}^-(T) = \rg{A}^-[ T ] = \rg{B}^-$.
\end{theorem}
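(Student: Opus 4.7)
The plan is to prove the theorem via the chain of inclusions
$\rg{B}^- \subseteq \rg{A}^-[T] \subseteq \rg{A}^-(T) \subseteq \rg{B}^-$;
the middle inclusion is tautological, so only the outer two require argument.

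\textbf{First inclusion.} $\rg{B}^- \subseteq \rg{A}^-[T]$ is immediate from total ramification. Since $\K \supset \mu_p$, every prime $\wp \mid p$ of $\K$ is totally ramified in $\K_\infty/\K$, so there is a unique prime $\wp_n$ of $\K_n$ above each such $\wp$, and $\tau(\wp_n) = \wp_n$ as ideals. Hence $T \cdot [\wp_n]_p = 0$ in $A_n$ for every $n$; passing to the projective limit, $T$ annihilates all of $\rg{B}$, and in particular $\rg{B}^- \subseteq \rg{A}^-[T]$.

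\textbf{Nontrivial inclusion.} For $\rg{A}^-(T) \subseteq \rg{B}^-$ I would apply Theorem \ref{iw6} at level $n = 1$, which gives $\rg{A}/Y_1 \cong A_1$ with $Y_1 = T\rg{A} + J$ and $J = [a_2, \ldots, a_s]_{\Z_p}$. Under Artin reciprocity $\rg{A} \cong \Gal(\KH_\infty/\K_\infty)$, each $a_i$ is attached to the inertia at the ramified prime $\wp_i \mid p$, so $a_i \in \rg{B} + T\rg{A}$; consequently $Y_1^- \subseteq T\rg{A}^- + \rg{B}^-$. Now set $M = (\rg{A}')^- = \rg{A}^-/\rg{B}^-$. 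For $a \in \rg{A}^-(T)$, the image $\bar{a} \in M$ is still $T$-power-torsion, and the desired inclusion becomes equivalent to $M(T) = 0$.

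\textbf{Main obstacle.} The crux is thus proving $M(T) = 0$. A dévissage reduces this to $M[T] = 0$: if $T^k \bar{a} = 0$ with $k \geq 1$ minimal, then $T^{k-1}\bar{a}$ is a nonzero element of $M[T]$, contradicting $M[T] = 0$. The vanishing $M[T] = 0$ in the CM setting combines two classical facts: first, that $M$ has no non-trivial finite $\Lambda$-submodules, which excludes any $p$-primary $T$-torsion; and second, a Chevalley-type ambiguous-class computation for the cyclic layers $\K_n/\K$ which, once the classes of ramified primes above $p$ have been divided out, forces the $\Gamma$-invariants $M^{\Gamma} = M[T]$ to vanish. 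Establishing this last step---no non-trivial $\Gamma$-fixed element survives in $M$ after removing the ramified contribution---is the technical heart of the proof.
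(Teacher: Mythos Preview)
Your outline reduces the theorem to the right target, namely $(\rg{A}')^-[T]=0$, but the two steps you invoke to reach it are both circular.

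First, the assertion that ``each $a_i$ is attached to the inertia at $\wp_i$, so $a_i \in \rg{B} + T\rg{A}$'' is not a known input; it is essentially the theorem itself. In Iwasawa's Theorem~6 the elements $a_i$ are defined by the relation $\sigma_i = a_i \sigma_1$ between topological generators of two inertia groups inside $\Gal(\KH_\infty/\K)$; nothing in that definition identifies $a_i$ with the Artin symbol of a ramified prime. The paper spends most of Chapter~4 proving exactly this identification: in the imaginary quadratic prototype one computes $\varphi(a) = \tau^{\jmath}\tau^{-1}$ and shows, because $\tau$ restricts to a Frobenius on the unramified part, that this equals $\lchooses{\U/\KL}{\eu{p}^{\jmath-1}}$, whence $a = [\eu{p}^{\jmath-1}] \in \rg{B}^-$. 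That computation is the content, not a preliminary.

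Second, your appeal to a ``Chevalley-type ambiguous-class computation'' for $M[T]=0$ is exactly the statement of Corollary~\ref{gross} (the Gross--Kuz'min conjecture for CM fields), which the paper deduces \emph{from} Theorem~\ref{simram}, not conversely. Greenberg proved it for abelian $\K$, but for general galois CM $\K$ no genus-theory argument of the kind you sketch is available; if it were, the conjecture would have been settled long ago.

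The paper's route is quite different from yours: it builds the maximal $p$-ramified abelian extension $\Omega^-$ via explicit unit radicals (Lemma~\ref{r2}), proves an order-reversal principle for Kummer pairings on $T$-power torsion (Lemma~\ref{revers}), and then uses local class field theory to construct, for each pair $\wp,\overline{\wp}$, a canonical $\Z_p$-extension $\U_\wp \subset \Omega^- \cap \KH_\infty$ (Lemma~\ref{atwp}). The inertia-commutator computation above then shows each $\U_\wp$ is \emph{not} totally split at $p$, so $\KH_B^- \cap \KH'_\infty$ is finite over $\K_\infty$; Lemma~\ref{gaussum} converts this splitting obstruction into $\rg{A}^-(T)=\rg{B}^-$.
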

We prove the theorem in Chapter 4. Then we derive from Theorem \ref{simram}
the following result:
\begin{corollary}
\label{gross}
Let $\K$ be a CM $s$-field and $\rg{B}_n, A'_n$ be defined by \rf{bram}. Then
$(\rg{A}')^-[ T ] = \{ 1 \}$.
\end{corollary}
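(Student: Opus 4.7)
The plan is to extract Corollary \ref{gross} as a formal consequence of Theorem \ref{simram}, since the latter has absorbed all the essential content (the identification $\rg{A}^-[T] = \rg{B}^-$ and the absence of higher $T$-nilpotency). First, I would record the minus-part exact sequence. Because $p$ is odd and $\K$ is CM, complex conjugation gives an idempotent $e^- = (1-j)/2$ acting on every $\Lambda$-module in sight, so applying $e^-$ to the defining sequence $1 \to \rg{B} \to \rg{A} \to \rg{A}' \to 1$ yields the split short exact sequence
\[
0 \longrightarrow \rg{B}^- \longrightarrow \rg{A}^- \longrightarrow (\rg{A}')^- \longrightarrow 0,
\]
so elements of $(\rg{A}')^-$ can be lifted to $\rg{A}^-$ with controlled ambiguity in $\rg{B}^-$.

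Next, I would unfold the $T$-torsion condition. Pick $\bar{x}\in(\rg{A}')^-[T]$ and choose a lift $x\in\rg{A}^-$; then $T\bar{x}=0$ translates to $Tx\in\rg{B}^-$. Applying the first identity of Theorem \ref{simram}, namely $\rg{B}^-=\rg{A}^-[T]$, this says that $Tx$ is itself killed by $T$, i.e.\ $T^2 x = 0$. Thus $x$ lies in the $T$-primary submodule $\rg{A}^-(T)$.

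Now I would invoke the second equality of Theorem \ref{simram}, $\rg{A}^-(T)=\rg{A}^-[T]$, which asserts that no nontrivial $T$-nilpotency occurs in $\rg{A}^-$: every element killed by a power of $T$ is already killed by $T$. Applied to $x$, this upgrades $T^2 x = 0$ to $Tx = 0$, so $x\in\rg{A}^-[T]=\rg{B}^-$, and consequently $\bar{x}=0$ in $(\rg{A}')^-$. This establishes $(\rg{A}')^-[T]=\{1\}$.

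The argument is purely formal once Theorem \ref{simram} is in hand; the only point worth being careful about is the $\pm$-decomposition used to pass between $(\rg{A}')^-$ and $\rg{A}^-/\rg{B}^-$, which is legitimate because $p$ is odd and $\K$ is CM so that $e^-\in\Z_p[\Delta]$. The genuine obstacle — proving $\rg{A}^-[T]=\rg{B}^-$ and the absence of higher $T$-nilpotency on the minus side — is confined entirely to Theorem \ref{simram}; its proof will require the ramification-theoretic input of Iwasawa's Theorem \ref{iw6} applied in the CM setting, but that work does not need to be repeated for the corollary.
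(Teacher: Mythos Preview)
Your proposal is correct and follows essentially the same route as the paper: both derive the corollary as a formal consequence of Theorem \ref{simram}, using the two identities $\rg{A}^-(T)=\rg{A}^-[T]=\rg{B}^-$ to show that any lift of an element of $(\rg{A}')^-[T]$ already lies in $\rg{B}^-$. The paper compresses this into a single sentence (``it follows directly from the definitions''), while you have written out the lift-and-chase argument explicitly; one small quibble is that the minus-part sequence $0\to\rg{B}^-\to\rg{A}^-\to(\rg{A}')^-\to 0$ is \emph{exact} (because $e^-$ is an exact functor for odd $p$) but need not be \emph{split} --- fortunately you only use exactness.
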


This confirms a conjecture of Gross and Kuz'min stated by Federer and Gross in
\cite{FG} in the context of $p$-adic regulators of $p$-units of number fields,
and earlier by Kuz'min \cite{Ku} in a class field oriented statement, which
was shown by Federer and Gross to be equivalent to the non vanishing of
$p$-adic regulators of $p$-units. We prove here the class field theoretic
statement for the case of CM fields. The conjecture was known to be true for
abelian extensions, due to previous work of Greenberg \cite{Gr1}.

\subsection{Sketch of the proof}
We start with an overview of the proof. Our approach is based on the
investigation of the growth of the ranks $r_n := \prk(\id{A}_n) \ra
\prk(\id{A}_{n+1})$; for this we use \textit{transitions} $C_n :=
\id{A}_{n-1}/\iota_{n,n+1}(\id{A}_n)$, taking advantage of the fact that our
assumptions assure that the ideal lift maps are injective for all $n$. Since
we also assumed $\prk(\id{A}) < \infty$, it is an elementary fact that the
ranks $r_n$ must stabilize for sufficiently large $n$. Fukuda proved recently
that this happens after the first $n$ for which $r_n = r_{n+1}$. We call this
value $n_0$: the {\em stabilization index}, and focus upon the
\textit{critical section } $\id{A}_n : n < n_0$. In this respect, the present
work is inspired by Fukuda's result and extends it with the investigation of
the critical section; this reveals useful criteria for stabilization, which
make that the growth of conic $\Lambda$-modules is quite controlled: at the
exception of some modules with \textit{flat} critical section, which can grow
in rank indefinitely, but have constant exponent $p^k$, the rank is bounded by $p(p-1)$.

The idea of our approach consists in modeling the \textit{transitions} $(A,B)
= (\id{A}_n, \id{A}_{n+1})$ by a set of dedicated properties that are derived
from the properties of conic elements. The conic transitions are introduced in
Definition \ref{dcontrans} below. Conic transitions do not only well describe
the critical section of conic $\Lambda$-modules, but they also apply to
sequences $A_1, A_2, \ldots, A_n$ of more general finite modules on which a
$p$-cyclic group $< \tau >$ acts via the group ring $\rg{R} = \ZM{p^N}[ \tau ]
= \Z_p[ T ], T = \tau - 1$, with $p^N A_n = 0$; the modules $A,B$ are in
particular assumed to be cyclic as $\rg{R}$-modules and they fulfill some
additional properties with respect to norms and lifts. As a consequence, the
same theory can be applied for instance to sequences of class groups in cyclic
$p$-extensions, ramified or unramified.

The ring $\rg{R}$ is a local ring with maximal ideal $(p, T)$; since
this ideal is not principal, it is customary to use the Fitting ideals
for the investigation of modules on which $\rg{R}$ acts. Under the
additional conditions of conicity however, the transitions $(A, B)$
come equipped with a wealth of useful $\F_p[ T ]$-modules. Since
$\F_p[ T ]$ has a maximal ideal $(T)$, which is principal, this highly
simplifies the investigation. The most important $\F_p[ T ]$-modules
related to a transition are the \textit{socle}, $S(B) = B[ p ]$ and
the \textit{roof}, $R(B) = B/p B$. It is a fundamental, but not evident
fact, that $S(B)$ is a cyclic $\F_p[ T ]$, and we prove this by
induction in Lemma \ref{cycsoc}. With this, the transitions are caught
between two pairs of cyclic $\F_p[ T ]$-modules, and the relation
between these modules induces obstructions on the growth types. These
obstructions are revealed in a long sequence of tedious case
distinctions, which develop in a natural way. 

The relation between rank growth and norm coherence reveals in Corollary
\ref{termt} the principal condition for termination of the rank growth:
assuming that $\prk(A_1) = 1$, this must happen as soon as $\prk(A_n) <
p^{n-1}$. This is a simple extension of Fukuda's results, giving a condition
for growth termination, not only for rank stabilization. A further important
module associated to the transition is the kernel of the norm, $K := \Ker(N :
B \ra A)$. The structure of $K$ is an axiom of conic transitions, which is proved
to hold in the case of conic $\Lambda$-modules. The analysis of growth in
conic transitions is completed in the Chapter 2.
 
In Chapter 3, the analysis of transitions can be easily adapted to conic
$\Lambda$-modules, yielding an inductive proof of their structure, as
described in Theorem \ref{main}. In the fourth chapter we prove the Theorem \ref{simram} and Corollary \ref{gross}.

Except for the second Chapter, the material of this paper is quite simple and
straight forward. In particular, the main proof included in Chapter 3 follows
easily from the technical preparation in Chapter 2. Therefore the reader
wishing to obtain first an overview of the main ideas may skip the second
chapter in a first round and may even start with Chapter 4, in case her
interest goes mainly in the direction of the proof of the conjectures included
in that chapter.

The Lemmata \ref{fukuda}, \ref{ab} are crucial for our approach to
Kummer theory. They imply the existence of some index $n_0 \geq 0$
such that for all coherent sequences $a = (a_n)_{n \in \N} \in \rg{A}$
of infinite order, there is a constant $z = z(a) \in \Z$ such that:
\begin{eqnarray}
\label{stab}
\prk(\id{A}_n) & = & \prk(\id{A}_{n_0}), \nonumber \\
a_{n+1}^p & = & \iota_{n,n+1}(a_n) \\
\ord(a_n) & = & p^{n+z}. \nonumber
\end{eqnarray}

\section{Growth of $\Lambda$-modules}
We start with a discussion of the definition of conicity:
\subsection{The notion of conic modules and elements}
We have chosen in this paper a defensive set of properties for conic modules,
in order to simplify our analysis of the growth of $\Lambda$-modules. We give
here a brief discussion of these choices. The restriction to CM fields and
submodules $\Lambda a \subset \rg{A}^-$ is a sufficient condition for ensuring
that all lift maps $\iota_{n,n+1}$ are injective. One can prove in general
that for $a$ of infinite order, these maps are injective beyond a fixed
stabilization index $n_0$ that will be introduced below. For $n < n_0$ the
question remains still open, if it suffices to assume that $\ord(a) = \infty$
in order to achieve injectivity at all levels. It is conceivable that the
combination of the methods developed in this paper may achieve this goal, but
the question allows no simple answer, so we defer it to latter investigations.

By assuming additionally that $(f_a(T), \omega_n(T)) = 1$, we obtain as a
consequence of these assumptions, that for $x = (x_m)_{m \in \N}$ with $x_m =
1$, we have $x \in \omega_n \rg{A}$. In the same vein, if $x_m^{\omega_n} = 1$
for $m > n$, then $x_m \in \iota_{n,m}(\id{A}_n)$. These two consequences are
very practical and will be repeatedly applied below.

The fundamental requirement to conic elements $a \in \rg{A}$, is that the
module $\Lambda a$ has a direct complement which is also a $\Lambda$-module.
Conic modules exist -- see for instance Corrolary 1 or the case of imaginary
quadratic extensions $\K$ with $\Z_p$ cyclic $(\id{C}(\K))_p$ and only one
prime above $p$. The simplifying assumption allows to derive interesting
properties of the growth of $\Lambda$-modules, that may be generalized to
arbitrary modules.

This condition in fact implies the property 3. of the definition
\ref{dconic}, a condition which we also call
\textit{$\Z_p$-coalescence closure} of $\Lambda a$, meaning that
$\Lambda a$ is equal to the smallest $\Z_p$-submodule of $\rg{A}$,
which contains $\Lambda a$ and has a direct complement as a
$\Z_p$-module. Certainly, given property 1 and using additive notation,
$b = g(T) a + x, x \in \rg{B}, g \in \Z_p[ T ]$, and then $q b \in \Lambda
a$ implies by property 1 that $q x = 0$, so $b$ is twisted by a
$p$-torsion element, which is inconsistent with the fact that
$\rg{A}^-$ was assumed $\Z_p$-torsion free. It is also an interesting
question, whether the assumption of property 1 and $a_i \in T
\rg{A}$ are sufficient to imply property 1.

\subsection{Auxiliary identities and lemmata}
We shall frequently use some identities in group rings, which are
grouped below. For $n > 0$ we let $\rg{R}_n = \ZM{p^N}[ T
]/(\omega_n)$ for some large $N > 0$, satisfying $N > \exp(A_n)$. The
ring $\rg{R}_n$ is local with maximal ideal $(\omega_n)$ and we write
$\overline{T}$ for the image of $T$ in this ring. Since
$\overline{T}^{p^n} \in p \rg{R}_n$, it follows that
$\overline{T}^{p^{n+N}} = 0$; thus $\overline{T} \in \rg{R}_n$ is
nilpotent and $\rg{R}_n$ is a principal ideal domain.

We also consider the group ring $\rg{R}'_n:=\ZM{p^N}[ \omega_n
]/(\nu_{n+1,n})$, which is likewise a local principal ideal domain with
maximal ideal generated by the nilpotent element $\omega_n$. From the binomial
development of $\nu_{n+1,n}$ we deduce the following fundamental identities in
$\Lambda$
\begin{eqnarray}
\label{norm} 
\nu_{n+1,n} & = & \frac{(\omega_n+1)^p-1}{\omega_n} = \omega_n^{p-1} +
\sum_{i=1}^{p-1} \binom{p}{i}/p \cdot \omega_n^{i-1} \nonumber \\ & = & p (1 +
O(\omega_n)) + \omega_n^{p-1} = \omega_n^{p-1} + p u(\omega_n), \\ & &
u(\omega_n) = 1 + \frac{p-1}{2} \omega_n + \ldots + \omega_n^{p-2} \in
\Lambda^{\times}, \nonumber \\ \omega_{n}^p & = & \omega_n \cdot (\nu_{n+1,n}
- p u(\omega_n)) = \omega_{n+1} - p \omega_n u(\omega_n). \nonumber
\end{eqnarray}

The above identities are equivariant under the Iwasawa involution $* :
\tau \mapsto (p+1) \tau^{-1}$.  Note that we fixed the cyclotomic
character $\chi(\tau) = p+1$. If $f(T) \in \Lambda$, we write $f^*(T)
= f(T^*)$, the \textit{reflected} image of $f(T)$. The reflected norms
are $\nu^*_{n+1,n} = \omega_{n+1}^*/\omega_n^*$. From the definition
of $\omega_n^*$ we have the following useful identity:
\begin{eqnarray}
\label{om*om} 
\omega_n + t \omega_n^* = p^{n-1} c, \quad t \in \Lambda_n^{\times}, c \in
\Z_p^{\times}.
\end{eqnarray} 

We shall investigate of the growth of the modules $A_n$ for $n \ra
n+1$. Suppose now that $A$ is a finite abelian $p$-group which is cyclic as
an $\Z_p[ T ]$-module, generated by $a \in A$. We say that a monic polynomial
$f \in \Z_p[ T ]$ is a \textit{minimal polynomial} for $a$, if $f$ has minimal
degree among all monic polynomials $g \in a^{\top}= \{ x \in \Z_p[ T ] : x a =
0 \} \subset \Z_p[ T ]$.
 
We note the following consequence of Weierstrass preparation:
\begin{lemma}
\label{minp}
Let $I = (g(T)) \subset \Z_p[ T ]$ be an ideal generated by a monic polynomial $g(T) \in \Z_p[ T ]$. If $n = \deg(g)$ is minimal amongst all the degrees of
monic polynomials generating $I$, then $g(T) = T^n + p h(T)$, with $h(T) \in
\Z_p[ T ]$ and $\deg(h) < n$.
\end{lemma}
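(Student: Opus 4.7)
The plan is to invoke Weierstrass preparation, viewing $g$ as an element of $\Lambda = \Z_p[[T]]$: if $g$ fails to be distinguished, I will extract a distinguished polynomial $P$ of strictly smaller degree that generates the same ideal, contradicting the minimality of $n$.

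Concretely, I would write $g(T) = T^n + \sum_{i=0}^{n-1} a_i T^i$ and let $s$ denote the smallest index $i$ for which $a_i \notin p \Z_p$, adopting the convention $s = n$ if no such index exists. Because $g$ is monic, the reduction $\bar g \in \F_p[T]$ has $T$-adic valuation exactly $s$, and the case $s = n$ is precisely the statement that $g$ is distinguished, delivering at once the desired form $g = T^n + p h(T)$ with $\deg h < n$. So assume $s < n$. Weierstrass preparation applied to $g \in \Lambda$ then yields a factorization $g = U \cdot P$, where $U \in \Lambda^{\times}$ is a unit power series and $P \in \Z_p[T]$ is a distinguished polynomial of degree exactly $s$. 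Since $U$ is a unit of $\Lambda$, the ideals $(g)$ and $(P)$ coincide in $\Lambda$, so $P$ is a monic polynomial generator of $I$ of degree $s < n$, contradicting the minimality hypothesis; hence $s = n$ and we are done.

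There is no substantial technical obstacle beyond Weierstrass preparation itself; the only subtlety is interpretational. Inside $\Z_p[T]$ alone the monic generator of a principal ideal is unique, since $\Z_p[T]^{\times} = \Z_p^{\times}$ forces any two monic generators to agree, so the minimality hypothesis is only meaningful once one compares $g$ against the monic polynomial generators of $(g)$ obtained via unit factors from $\Lambda^{\times}$. This is precisely the extra ring-theoretic room that the auxiliary factor $U^{-1}$ provides, and it is what allows the passage $g \leadsto P$ to shrink the degree whenever $g$ is not already distinguished.
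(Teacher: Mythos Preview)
Your proof is correct and follows the same approach as the paper, which also invokes Weierstrass preparation (citing Washington, Theorem~7.3) to produce a lower-degree distinguished generator when some coefficient below the leading term is a unit. Your closing remark about the interpretational subtlety is apt: the paper's proof tacitly passes to $\Lambda = \Z_p[[T]]$ when asserting that $g(T)$ and the Weierstrass factor generate the same ideal, since as you observe the monic generator in the polynomial ring $\Z_p[T]$ alone is unique.
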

\begin{proof}
Let $g(T) = T^n + \sum_{i=0}^{n-1} c_i T^i$. Suppose that there is some $i <
n$ such that $p \nmid c_i$. Then the Weierstrass Preparation Theorem
(\cite{Wa}, Theorem 7.3) implies that $g(T) \Z_p[ T ] = g_2(T) \Z_p[ T ]$, for
some polynomial with $\deg(g_2(T)) \leq n$, which contradicts the choice of
$g$. Therefore $p \mid c_i$ for all $0 \leq i < n$, which completes the proof
of the lemma.
\end{proof}
\begin{remark}
\label{anpol}
As a consequence, if $A$ is a finite abelian group which is a $\Lambda$-cyclic
module of $p$-rank $n$, then there is some polynomial $g(T) = T^r - p h(T)$
which annihilates $A$.
\end{remark}

We shall use the following simple application of Nakayama's Lemma:
\begin{lemma}
\label{naka}
Let $X$ be a finite abelian $p$-group of $p$-rank $r$ and $\id{X} = \{ x_1,
x_2, \ldots, x_r \} \subset X$ be a system with the property that the images
$\overline{x}_i \in X/ p X$ form a base of this $\F_p$-vector space. Then
$\id{X}$ is a system of generators of $X$.
\end{lemma}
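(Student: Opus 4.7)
The plan is the standard Nakayama-style argument, specialized to the fact that $p$ acts nilpotently on the finite $p$-group $X$.

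Let $Y = \langle x_1, \ldots, x_r\rangle \subset X$ be the subgroup generated by $\id{X}$. The hypothesis that the images $\overline{x}_i$ form an $\F_p$-basis of $X/pX$ translates into the equality $X = Y + pX$, which I would record first. The goal is then to conclude $Y = X$.

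The core step is an iteration: starting from $X = Y + pX$, substitute the same identity back into $pX$ to obtain
\[
X \;=\; Y + p(Y+pX) \;=\; Y + p^2 X,
\]
and an immediate induction on $k$ gives $X = Y + p^k X$ for every $k \geq 0$. Since $X$ is a finite abelian $p$-group, its exponent is a finite power $p^N$, so $p^N X = 0$, and taking $k = N$ yields $X = Y$.

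The only mild subtlety is making sure the first equality $X = Y + pX$ is justified: if $x \in X$, its reduction $\overline{x}\in X/pX$ can be written as $\sum_i c_i \overline{x}_i$ with $c_i \in \F_p$; lifting the $c_i$ to integers $\tilde c_i\in\Z$, we get $x - \sum_i \tilde c_i x_i \in pX$, so $x \in Y + pX$. After that, everything is bookkeeping; there is no real obstacle, and no invocation of the ambient $\Lambda$-structure is needed, since the statement only involves the underlying $\Z_p$-module structure.
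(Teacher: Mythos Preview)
Your argument is correct and is exactly the standard proof of Nakayama's Lemma specialized to a finite $p$-group, where nilpotence of $p$ replaces the usual Jacobson-radical hypothesis. The paper's own proof simply cites Nakayama's Lemma from Lang, so you have unpacked what the paper invokes as a black box; the approaches are the same in substance.
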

\begin{proof}
This is a direct consequence of Nakayama's Lemma, \cite{La1}, Chapter VI, \S
6, Lemma 6.3.
\end{proof}

The following auxiliary lemma refers to elementary abelian $p$ groups with
group actions.
\begin{lemma}
\label{trank}
Let $E$ be an additively written finite abelian\footnote{These groups are
sometimes denoted by {\em elementary abelian $p$-groups},
e.g. \cite{Wa},\S 10.2.} $p$-group of exponent $p$. Suppose there is a cyclic
group $G = < \tau >$ of order $p$ acting on $E$, and let $T = \tau-1$. Then
$E$ is an $\F_p[ T ]$-module and $E/T E$ is an $\F_p$-vector space. It $r =
\dim_{\F_p}(E/ T E)$, then every system $\id{E} = \{ e_1, e_2, \ldots, e_r\}
\subset E$ such that the images $\overline{e}_i \in E/(T E)$ form a base of
the latter vector space, is a minimal system of generators of $E$ as an $\F_p[
T ]$-module.  Moreover $E[ T ] \cong E/(T E )$ as $\F_p$-vector spaces and $E
= \oplus_{i=1}^r \F_p[ T ] e_i$ is a direct sum of $r$ cyclic $\F_p[ T
]$-modules.
\end{lemma}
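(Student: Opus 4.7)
The plan is to recognise $E$ as a module over the local principal ideal ring $\rg{R} = \F_p[T]/(T^p)$ and to reduce all the assertions to Nakayama together with the structure theorem for modules over a chain ring. First, in characteristic $p$ one has $T^p = (\tau-1)^p = \tau^p - 1 = 0$ on $E$, so the $G$-action factors through $\F_p[G] \cong \rg{R}$. The ring $\rg{R}$ is local Artinian with maximal ideal $(T)$ and residue field $\F_p$, and its only ideals are $(T^i)$ for $0 \leq i \leq p$; in particular it is a principal ideal ring with a nilpotent uniformizer $T$.

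Applying Lemma~\ref{naka} to $E$ regarded as an $\rg{R}$-module gives the generation claim directly: the minimal number of $\rg{R}$-generators of $E$ equals $\dim_{\F_p}(E/TE) = r$, and any lift of an $\F_p$-basis of $E/TE$ to elements of $E$ is a minimal $\rg{R}$-generating system. I would then invoke the structure theorem for finitely generated modules over a chain ring to obtain $E \cong \bigoplus_{i=1}^r \rg{R}/(T^{k_i})$ with $1 \leq k_i \leq p$. An elementary proof proceeds by induction on $|E|$: pick $e_1$ of maximal $T$-order $k_1$, so that $\ann(e_1) = (T^{k_1})$; then the cyclic submodule $\rg{R} e_1$ is pure, in fact injective since $\rg{R}$ is a Frobenius ring, hence splits off as a direct summand, and one iterates on the quotient $E/\rg{R} e_1$, which by Nakayama has $p$-rank $r-1$.

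From the decomposition all remaining assertions are immediate: each summand $\rg{R}/(T^{k_i})$ has one-dimensional $T$-socle generated by the image of $T^{k_i-1}$, whence $E[T] \cong \F_p^r \cong E/TE$ as $\F_p$-vector spaces, and $E$ is manifestly a direct sum of $r$ cyclic $\F_p[T]$-modules. The principal obstacle is the splitting in the middle step: for a given lift $\{e_i\}$ of a basis of $E/TE$ the sum $\sum \F_p[T] e_i$ need not be direct on the nose, so one has to produce a suitable choice of generators via the Frobenius/self-injective character of $\rg{R}$; granted this, the rest is pure bookkeeping.
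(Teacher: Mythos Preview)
Your argument is correct and follows essentially the same line as the paper's proof: both recognise that $\F_p[G]\cong\F_p[T]/(T^p)$ is local with nilpotent maximal ideal, invoke Nakayama for the generation statement, and then appeal to a structure theorem to obtain the cyclic decomposition. The only cosmetic difference is the language used for that structure theorem: the paper views $T$ as a nilpotent linear endomorphism of the $\F_p$-vector space $E$ and quotes the Jordan normal form, whereas you phrase the same fact as the classification of modules over the chain ring $\F_p[T]/(T^p)$ (equivalently, self-injectivity of this Frobenius ring). These are the same theorem in two dialects. The paper additionally reads off $E[T]\cong E/TE$ from the four-term exact sequence $0\to E[T]\to E\xrightarrow{T}E\to E/TE\to 0$, which gives that isomorphism without first producing the full decomposition; your route via the decomposition is equally valid. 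Your caveat that an \emph{arbitrary} lift of a basis of $E/TE$ need not split $E$ as a direct sum is well taken and is a point the paper's formulation leaves ambiguous; the Jordan/chain-ring argument indeed only furnishes \emph{some} generating system with the direct-sum property.
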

\begin{proof}
The modules $E[ T ]$ and $E/ T E$ are by definition annihilated by $T$; since 
$\F_p[ T ]/(T \F_p[ T ]) \equiv \F_p$, they are finite dimensional 
$\F_p$-vector spaces. Let $\id{E}$ be defined like in the hypothesis. The 
ring $\F_p[ T ]$ is local with principal maximal ideal $T \F_p[ T ]$, and $T$ 
is a nilpotent of the ring since $\tau^p = 1$ so we have the following 
identities in $\F_p[ \tau ] = \F_p[ T ]$: $0 = \tau^p - 1 = (T+1)^p-1 = T^p$. 
It follows from Nakayama's Lemma, that $\id{E}$ is a minimal system of 
generators. The map $T : E \ra E$ is a nilpotent linear endomorphism of the 
$\F_p$-vector space $E$, so the structure theorem for Jordan normal forms of 
nilpotent maps implies that $E = \oplus_{i=1}^r \F_p[ T ] e_i$ 
One may also read this result by considering the exact sequence
\begin{eqnarray*}
\xymatrix{ 0 \ar@{->}[r] & E[ T ] \ar@{->}[r] & E \ar@{->}[r] & E
\ar@{->}[r] & E/(T E) \ar@{->}[r] & 0 }
\end{eqnarray*}
in which the arrow $E \ra E$ is the map $e \mapsto T e$. 
The diagram indicates that $E[ T ] \cong E/(T E)$, hence the claim.
\end{proof}

In the situation of Lemma \ref{trank}, we denote the common $\F_p$-dimension
of $E[ T ]$ and $E/T E$ by {\em $T$-rank of $E$}.
\subsection{Stabilization}
We shall prove in this section the relations \rf{stab}. First we introduce the
following notations: 
\begin{definition}
\label{socroof}
given a finite abelian $p$-group $X$, we write $S(X) = X[ p ]$ for its
$p$-torsion: we denote this torsion also by {\em the socle} of
$X$. Moreover the factor $X/X^p = R(X)$ -- the {\em roof} of $X$. Then
$S(X)$ and $R(X)$ are $\F_p$-vector spaces and we have the classical
definition of the $p$-rank given by $\prk(X) = \rk(S(X)) = \rk(R(X))$, the
last two ranks being dimensions of $\F_p$-vector spaces. 
We say that $x \in
X$ is {\em $p$-maximal}, or simply maximal, if $x \not \in X^p$.

Suppose there is a cyclic $p$-group $G = \tau$ acting on $X$, such that $X$ is
a cyclic $\Z_p[ T ]$-module with generator $x \in X$, where $T =
\tau-1$. Suppose additionally that $S(X)$ is also a cyclic $\F_p[ T
]$-module. Let $s:= (\ord(x)/p) x \in S(X)$. Then we say that $S$ is {\em
straight} if $s$ generates $S(X)$ as an $\F_p[ T ]$-module; otherwise, $S(X)$
is {\em folded}.
\end{definition}
The next lemma is a special case of Fukuda's Theorem 1 in \cite{Fu}:
\begin{lemma}[Fukuda]
\label{fukuda}
Let $\K$ be a CM field and $A_n, \rg{A}$ be defined like
above. Suppose that $\mu(\rg{A}^-) = 0$ and there is an $n_0 > 0$,
such that $\prk(A^-_{n_0}) = \prk(A^-_{n_0+1})$. Then $\prk(A^-_n) =
\prk(A^-_{n_0}) = \lambda^-$ for all $n > n_0$.
\end{lemma}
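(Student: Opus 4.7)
The plan is to reduce the statement to an application of Nakayama's Lemma on a finite $\F_p[[T]]$-module. By Iwasawa's Theorem \ref{iw6},
\[ A_n^- \;\cong\; \rg{A}^-/\bigl(\omega_n \rg{A}^- + [a_2^-,\ldots,a_s^-]_{\Z_p}\bigr), \]
with $a_i \in \rg{B}$. Theorem \ref{simram} yields $\rg{B}^- = \rg{A}^-[T]$; in particular every $a_i^-$ is $T$-torsion, so $\omega_n a_i^- = 0$ for all $n \geq 1$. Consequently $[a_2^-,\ldots,a_s^-]_{\Z_p}$ is already a $\Lambda$-submodule, and setting $M := \rg{A}^-/\rg{B}^-$ delivers a genuine $\Lambda$-quotient satisfying $A_n^- \cong M/\omega_n M$. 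By additivity of $\mu$ along the exact sequence $0 \to \rg{B}^- \to \rg{A}^- \to M \to 0$, one has $\mu(M) = 0$, so $M$ is finitely generated as a $\Z_p$-module, and $\overline{M} := M/pM$ is a finite $\F_p[[T]]$-module on which $T$ -- and therefore each $\omega_n \equiv T^{p^{n-1}} \pmod p$ -- acts nilpotently, i.e., lies in the Jacobson radical $(T)$ of $\F_p[[T]]$.

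Granted this reduction, the Nakayama step is short. Since $\omega_{n+1} = \nu_{n+1,n}\,\omega_n$ in $\Lambda$, one has an inclusion of $\F_p$-subspaces $\omega_{n+1}\overline{M} \subseteq \omega_n\overline{M}$ whose codimensions are $\prk(A_{n+1}^-)$ and $\prk(A_n^-)$ respectively; the hypothesis $\prk(A_{n_0}^-) = \prk(A_{n_0+1}^-)$ therefore forces
\[ \omega_{n_0}\overline{M} \;=\; \omega_{n_0+1}\overline{M} \;=\; \nu_{n_0+1,n_0}\,\omega_{n_0}\overline{M}. \]
Identity \eqref{norm} gives $\nu_{n_0+1,n_0} = \omega_{n_0}^{p-1} + p\,u(\omega_{n_0})$ with $u \in \Lambda^\times$, so modulo $p$, $\nu_{n_0+1,n_0} \equiv \omega_{n_0}^{p-1}$. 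Setting $X := \omega_{n_0}\overline{M}$, this yields $X = \omega_{n_0}^{p-1} X$; Nakayama's Lemma applied to the finite $\F_p[[T]]$-module $X$, with $\omega_{n_0}^{p-1}$ in the Jacobson radical, forces $X = 0$.

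It follows that $\omega_{n_0}\overline{M} = 0$, and from $\omega_{n_0} \mid \omega_n$ in $\Lambda$ one obtains $\omega_n \overline{M} = 0$ for every $n \geq n_0$; equivalently $A_n^-/pA_n^- \cong \overline{M}$ is independent of $n$ in this range, so $\prk(A_n^-)$ is constant there. The equality of the stabilized value with $\lambda^-$ follows from the general Iwasawa asymptotic $\prk(A_n^-) \to \lambda^-$ as $n \to \infty$, valid whenever $\mu^- = 0$. The principal obstacle in the plan is the first step, namely reducing to a clean $\Lambda$-quotient $M$ on which $\omega_n$ acts as honest multiplication; this uses Theorem \ref{simram}, proved only later in Chapter~4. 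Once that identification is in place, the rest is essentially a one-line Nakayama computation.
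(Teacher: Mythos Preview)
Your Nakayama step (paragraphs two and three) is exactly Fukuda's argument, and the paper itself gives no proof here---it simply cites \cite{Fu}. The difficulty is entirely in your reduction.

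Two problems with the first paragraph. First, Theorem~\ref{iw6} does not assert that $a_i \in \rg{B}$: the $a_i$ are defined by the relation $I_i = \varphi(a_i) I_1$ between inertia groups, not as classes of ramified primes, so Theorem~\ref{simram} (which only says $\rg{B}^- = \rg{A}^-[T]$) does not by itself make the $a_i^-$ $T$-torsion. Second, and more seriously, the forward reference you flag is in fact circular in this paper: the proof of Theorem~\ref{simram} in Chapter~4 uses the radical constructions of Lemmata~\ref{rads} and~\ref{at}, which rest on the stable growth \eqref{ordinc}--\eqref{ordinc2} of Proposition~\ref{sf}, and Proposition~\ref{sf} explicitly invokes the present lemma for its $n_0$.

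The detour through $\rg{B}^-$ is also unnecessary. The standard form of Iwasawa's theorem (\cite{Wa}, Lemma~13.15, cited alongside Theorem~\ref{iw6}) gives directly $Y_n = \nu_{n,1}\, Y_1$ for all $n \geq 1$, with no need to identify the $a_i$. Writing $\overline{X} := \rg{A}^-/p\rg{A}^-$ (finite since $\mu^- = 0$) and $\overline{Y_n}$ for the image of $Y_n^-$, one has $\prk(A_n^-) = \dim_{\F_p} \overline{X}/\overline{Y_n}$ and $\overline{Y_{n+1}} = \nu_{n+1,n}\,\overline{Y_n}$. Your own Nakayama argument, applied to $\overline{Y_{n_0}}$ in place of $\omega_{n_0}\overline{M}$, then finishes: the hypothesis forces $\overline{Y_{n_0}} = \nu_{n_0+1,n_0}\,\overline{Y_{n_0}}$, so $\overline{Y_{n_0}} = 0$, and hence $\prk(A_n^-) = \dim_{\F_p}\overline{X} = \lambda^-$ for all $n \geq n_0$.
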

\begin{remark}
  The above application of Fukuda's Theorem requires $\mu = 0$; it is
  known that in this case the $p$-rank of $A_n$ must stabilize, but
  here it is shown that it must stabilize after the first time this
  rank stops growing from $A_n$ to $A_{n+1}$. We have restricted the
  result to the minus part which is of interest in our context. Note
  that the condition $\mu = 0$ can be easily eliminated, by
  considering the module $(\rg{A}^-)^{p^m}$ for some $m > \mu$.
\end{remark}

The following elementary, technical lemma will allow us
to draw additional information from Lemma \ref{fukuda}. 
\begin{lemma}
\label{ab}
Let $A$ and $B$ be finitely generated abelian $p-$groups denoted additively,
and let $N:B\ra A$, $\iota:A\ra B$ two $\Z_p$ - linear maps such that:
\begin{itemize}
\item[1.] $N$ is surjective and $\iota$ is injective\footnote{The same results
can be proved if the injectivity assumption is replaced by the assumption that
$\sexp(A) > p$ -- injectivity then follows. In our context we injectivity is 
however part of the premises, so we give here the proof of the simpler variant 
of the lemma};
\item[2.] The $p-$ranks of $A$ and $B$ are both equal to $r$ and $| B |/| A |
= p^r$.
\item[3.]  $N(\iota(a))=p a,\forall a\in A$ and $\iota$ is rank preserving, 
so $\prk(\iota(A)) = \prk(A)$;
\end{itemize}
Then $\iota(A) = p B$ and $\ord(x) = p \cdot \ord( N x)$ for all $x \in B$.
\end{lemma}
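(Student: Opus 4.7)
The plan is to extract both conclusions by identifying two natural subgroups of $B$ exactly: first $\iota(A) = pB$, then $\ker N = B[p]$. Once these are in hand, the order formula drops out mechanically.

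For the first identification ($\iota(A) = pB$), I would reduce everything modulo $p$. The induced map $\overline{N} : B/pB \to A/pA$ is a surjection of $\F_p$-vector spaces of the common dimension $r$, hence an isomorphism. The composition $\overline{N}\circ\overline{\iota}$ is induced by $N\iota = p\cdot\mathrm{id}_A$, which vanishes mod $p$, so injectivity of $\overline{N}$ forces $\overline{\iota} = 0$, that is, $\iota(A)\subseteq pB$. A cardinality count closes the step: $|\iota(A)| = |A|$ by injectivity of $\iota$, while $|pB| = |B|/|B[p]| = |B|/p^r = |A|$ by hypothesis 2, so $\iota(A) = pB$.

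For the second identification ($\ker N = B[p]$), both subgroups have order $p^r$ ($|\ker N| = |B|/|A| = p^r$, and $|B[p]| = p^r$ by rank), so it suffices to prove one inclusion. I would pick an elementary-divisor decomposition $B = \bigoplus_{i=1}^{r}\Z_p\,b_i$ with $\ord(b_i)=p^{l_i}$. Because $\iota(A) = pB = \bigoplus p\Z_p\,b_i$ has $p$-rank $r$, every $l_i\ge 2$; setting $a_i := \iota^{-1}(pb_i)$ yields a dual decomposition $A = \bigoplus \Z_p\,a_i$ with $\ord(a_i) = p^{l_i-1}$. Writing $N(b_i) = \sum_j \alpha_{ij}a_j$, the relation $N\iota = p\cdot\mathrm{id}$ translates to $p\alpha_{ij} = p\delta_{ij}$ in $\Z/p^{l_j-1}\Z$, i.e., $\alpha_{ij}-\delta_{ij}$ is $p$-torsion in the $j$-th summand. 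Evaluating $N$ at the $B[p]$-generator $p^{l_k-1}b_k$, the diagonal term $p^{l_k-1}a_k$ vanishes by the order of $a_k$, and each correction $p^{l_k-1}(\alpha_{kj}-\delta_{kj})a_j$ vanishes because $l_k\ge 2$ lets me extract one factor of $p$ that kills $\alpha_{kj}-\delta_{kj}$. Hence $B[p]\subseteq\ker N$, and equality follows.

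The order formula is then mechanical. For nonzero $x\in B$ with $\ord(x) = p^k$, the element $p^{k-1}x$ lies in $B[p] = \ker N$, so $p^{k-1}N(x) = 0$ and $\ord(N(x))\le p^{k-1}$; conversely, $p^m N(x) = 0$ with $m<k-1$ would place $p^m x$ in $\ker N = B[p]$ and force $\ord(x)\le p^{m+1}<p^k$, a contradiction. Hence $\ord(x) = p\cdot\ord(N(x))$. The main obstacle is the middle step: two subgroups of the same order $p^r$ in $B$ need not coincide, and ruling this out uses the full strength of the coupling $N\iota = p$. The elementary-divisor bookkeeping is what turns that axiom into the usable statement that $N$ agrees with the naive projection up to an $A[p]$-valued correction, and the bound $l_i\ge 2$ (itself a consequence of Step 1 together with rank-preservation of $\iota$) is exactly what swallows that correction on $B[p]$.
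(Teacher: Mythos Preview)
Your proof is correct. The first step---reducing modulo $p$ to get $\iota(A)\subseteq pB$---is exactly the paper's opening move, but your closure by a bare cardinality count ($|\iota(A)|=|A|=|B|/p^r=|pB|$) is considerably shorter than the paper's route: the paper instead chooses minimal generating sets for $A$ and $B$, relates them by a unimodular matrix, and argues that the exponents $e_i$ in $p^{e_i}b_i=\iota(a_i)$ are all equal to $1$ in order to conclude $|B/\iota(A)|=p^r$.

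For the order formula your path is genuinely different. You interpose the identification $\ker N=B[p]$, obtained by an elementary-divisor computation that exploits $l_i\ge 2$ and the congruence $p\alpha_{ij}\equiv p\delta_{ij}$; the order formula then follows formally. The paper never isolates $\ker N$; it uses instead that $q=\ord(Nx)\ge p$ forces $qx\in pB=\iota(A)$, whence $N(qx)=p\cdot\iota^{-1}(qx)=0$ gives $\iota^{-1}(qx)\in A[p]$ and $pqx=0$, together with the socle equality $S(B)=S(\iota(A))$. Your route is cleaner and yields the useful byproduct $\ker N=B[p]$; the paper's generator-and-matrix bookkeeping, on the other hand, is closer to what one would need under the weaker hypothesis $\sexp(A)>p$ mentioned in the footnote, where the direct cardinality count via injectivity of $\iota$ is not available.
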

\begin{proof}
The condition 3. is certainly fulfilled when $\iota$ is injective, as we did, but it also follows from $\sexp(A) > p$, even for lift maps that are not injective.
  We start by noting that for any finite abelian $p$ - group $A$ of
  $p$ - rank $r$ and any pair $\alpha_i, \beta_i; \ i = 1, 2, \ldots,
  r$ of minimal systems of generators there is a matrix $E \in \Mat(r,
  \Z_p)$ which is invertible over $\Z_p$, such that
\begin{eqnarray}
 \label{unimod}
\vec{\beta} = E \vec{\alpha}.
\end{eqnarray}
This can be verified directly by
extending the map $\alpha_i \mapsto \beta_i$ linearly to $A$ and,
since $(\beta_i)_{i=1}^r$ is also a minimal system of generators,
deducing that the map is invertible, thus regular. It represents a
unimodular change of base in the vector space $A \otimes_{\Z_p} \Q_p$.

The maps $\iota$ and $N$ induce maps
\[ {\overline \iota}: A/pA \ra B/pB,\;{\overline N}:B/pB\ra A/pA.\]
From 1, we see ${\overline N}$ is surjective and since, by 2., it is a
map between finite sets of the same cardinality, it is actually an
isomorphism. But 3. implies that ${\overline N} \circ {\overline
  \iota}:A/pA\ra A/pA$ is the trivial map and since ${\overline N}$ is
an isomorphism, ${\overline \iota}$ must be the trivial map, hence
$\iota(A) \subset pB$.

Since $\iota$ is injective, it is rank preserving, i.e. $\prk(A) =
\prk(\iota(A))$.  Let $b_i, \ i = 1, 2, \ldots, r$ be a minimal set of
generators of $B$: thus the images $\overline{b}_i$ of $b_i$ in $B/p B$ form
an $\F_p$ - base of this algebra. Let $a_i = N(b_i)$; since $\prk(B/p B) =
\prk(A/p A)$, the set $(a_i)_i$ also forms a minimal set of generators for
$A$. We claim that $| B/\iota(A) | = p^r$.

Pending the proof of this equality, we show that $\iota(A) = p B$. 
Indeed, we have the equality of $p$- ranks:
\[ | B/ p B | = | A/ p A | = | B/\iota(A) | = p^r, \] implying that $|
p B | = | \iota(A) |$; since $\iota(A) \subset p B$ and the $p$ -
ranks are equal, the two groups are equal, which is the first
claim. The second claim will be proved after showing that $|
B/\iota(A) | = p^r$.

Let $S(X)$ denote the socle of the finite abelian $p$ - group
$X$. There is the obvious inclusion $S(\iota(A)) \subset S(B) \subset
B$ and since $\iota$ is rank preserving, $\prk(A) = \prk(S(A)) =
\prk(B) = \prk(S(B)) = \prk(S(\iota(A)))$, thus $ S( B ) = S(\iota(A)
) $. Let $(a_i)_{i=1}^r$ be a minimal set of generators for $A$ and
$a'_i = \iota(a_i) \in B, i = 1, 2, \ldots, r$; the $(a'_i)_{i=1}^r$
form a minimal set of generators for $\iota(A) \subset B$. We choose
in $B$ two systems of generators in relation to $a'_i$ and the matrix
$E$ will map these systems according to \rf{unimod}.

First, let $b_i \in B$ be such that $p^{e_i} b_i = a'_i$ and $e_i > 0$
is maximal among all possible choices of $b_i$. From the equality of
socles and $p$ - ranks, one verifies that the set $(b_i)_{i=1}^r$
spans $B$ as a $\Z_p$-module; moreover, $\iota(A) \subset p B$ implies
$e_i \geq 1$. On the other hand, the norm being surjective, there is a
minimal set of generators $b'_i \in B, \ i = 1, 2, \ldots, r$ such
that $N(b'_i) = a_i$. Since $b_i, b'_i$ span the same finite
$\Z_p$-module $B$, \rf{unimod} in which $\vec{\alpha} = \vec{b}$ and
$\vec{\beta} = \vec{b'}$ defines a matrix with $\vec{b} = E \cdot
\vec{b'}$.  On the other hand,
\[ \iota(\vec{a}) = \vec{a'} = \diag(p^{e_i}) \vec{b} =
\diag(p_i^{e_i}) E \cdot \vec{b'}, \] 

The linear map $N : B \rightarrow A$ acts component-wise on vectors
$\vec{x} \in B^r$. Therefore,
\begin{eqnarray*}
  N \vec{b} & = & \vec{N b_i} = N (E \vec{b'}) = N \left( (\prod_j
    {b'}_j^{\sum_j e_{i,j}})_{i=1}^r \right) \\ & = & \left( \prod_j (N
    b'_j)^{\sum_j e_{i,j}} \right)_{i=1}^r = \left( \prod_j (a_j)^{\sum_j
    e_{i,j}} \right)_{i=1}^r \\ & = & E (\vec{a}).
\end{eqnarray*}
Using the fact that the subexponent is not $p$, we obtain thus two expressions
for $N \vec{a'}$ as follows:
\begin{eqnarray*}
  \vec{N a'} & = & p \vec{a} = p I \cdot \vec{a} \\ & = & N\left(\diag(p^{e_i})
    \vec{b}\right) = \diag(p^{e_i}) \cdot N (\vec{b}) = \diag(p^{e_i}) \cdot E
  \vec{a}, \quad \hbox{so } \\ \vec{a} & = & \diag(p^{e_i -1 }) \cdot E
  \vec{a}
\end{eqnarray*}
The $a_j$ form a minimal system of generators and $E$ is regular over $\Z_p$;
therefore $\vec(\alpha) := (\alpha_j)_{j=1}^r = E \vec{a}$ is also minimal
system of generators of $A$ and the last identity above becomes
\[ \vec{a} = \diag(p^{e_i -1 }) \cdot \vec{\alpha}.\] If $e_i > 1$ for
some $i \leq r$, then the right hand side is not a generating system of $A$
while the left side is: it follows that $e_i = 1$ for all $i$.
Therefore $| B /\iota(A) | = p^R$ and we have shown above that this
implies the injectivity of $\iota$.

Finally, let $x \in B$ and $q = \ord(N x) \geq p$. Then $q N (x) = 1 = N( q x
)$, and since $q x \in \iota(A)$, it follows that $N(q x) = p q x = 1$ and
thus $p q$ annihilates $x$. Conversely, if $\ord(x) = p q$, then $p q x = 1 =
N( q x ) = q N(x)$, and $\ord(N x) = q$. Thus $\ord(x) = p \cdot \ord(N x)$
for all $x \in B$ with $\ord(x) > p$. If $\ord( x ) = p$, then $x \in S(B) =
S(\iota(A) \subset \iota(A)$ and $N x = p x = 1$, so the last claim holds in
general.
\end{proof}

One may identify the modules $A, B$ in the lemma with subsequent levels
$A_n^-$, thus obtaining:
\begin{proposition}
\label{sf}
Let $\K$ be a CM field, let $\rg{A}^- = \varprojlim_n A_n^-$ and assume that
$\mu(\rg{A}) = 0$.  Let $n_0 \in \N$ be the bound proved in Lemma
\ref{fukuda}, such that for all $n \geq n_0$ and for all submodules $B \subset
\rg{A}^-$ we have $\prk(B_n) = \zprk(B) = \lambda(B)$.  Then the following
hold:
\begin{eqnarray}
\label{ordinc}
\quad p x & = & \iota( N_{n+1,n}(x) ), \quad \iota(A^-_n) = p A^-_{n+1},
\nonumber \\ \quad \omega_{n} x & \in & \iota_{n,n+1}(A^-_n[ p ])
\end{eqnarray}
In particular, for $n > n_0$
\begin{eqnarray}
\label{ordinc2}
\nu_{n+1,n}(a_{n+1}) = p a_{n+1} = \iota_{n,n+1}(a_n).
\end{eqnarray}
\end{proposition}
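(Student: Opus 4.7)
The strategy is to apply Lemma~\ref{ab} to the pair $(A,B) = (A_n^-, A_{n+1}^-)$ with $N = N_{n+1,n}$ and $\iota = \iota_{n,n+1}$, once we are in the stable range $n \geq n_0$, and then squeeze the three claims in \rf{ordinc} from that lemma together with the identities for $\nu_{n+1,n}$ recorded in \rf{norm}. The first step is to verify the hypotheses: $N$ is surjective by our setup, the common $p$-rank of $A_n^-$ and $A_{n+1}^-$ equals $\lambda^-$ by Lemma~\ref{fukuda}, the ratio $|A_{n+1}^-|/|A_n^-|$ equals $p^{\lambda^-}$ via the Iwasawa class-number formula (with $\mu^- = 0$), $N\iota$ is multiplication by $[\K_{n+1}:\K_n] = p$ from class field theory, and injectivity of $\iota$ on the CM minus part follows in this range by the cardinality comparison $|\iota(A_n^-)| \le |pA_{n+1}^-| = |A_{n+1}^-|/p^{\lambda^-} = |A_n^-|$. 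Lemma~\ref{ab} then immediately gives the middle equality $\iota(A_n^-) = pA_{n+1}^-$ together with the order relation $\ord(x) = p\cdot\ord(Nx)$.

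For the third inclusion $\omega_n x \in \iota(A_n^-[p])$ I would exploit $\omega_{n+1} = \omega_n\nu_{n+1,n}$: since $\omega_{n+1}A_{n+1}^- = 0$, we get $\nu_{n+1,n}(\omega_n x) = 0$, and using $\iota\circ N = \nu_{n+1,n}$ together with injectivity of $\iota$ forces $N(\omega_n x) = 0$. The order relation puts $\ker(N)\subseteq A_{n+1}^-[p]$, and the cardinality count $|\ker(N)|=p^{\lambda^-}=|A_{n+1}^-[p]|$ upgrades this to equality. The socle identity $S(\iota(A_n^-))=S(A_{n+1}^-)$ from the proof of Lemma~\ref{ab}, rewritten as $\iota(A_n^-[p]) = A_{n+1}^-[p]$ via injectivity of $\iota$, then places $\omega_n x$ inside $\iota(A_n^-[p])$.

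The first identity $px = \iota(Nx)$ is where the main care is needed. The extra input is that $\omega_n$ annihilates $A_n^-$ (since $\tau^{p^{n-1}}$ is trivial on $\K_n$) and that $\iota$ is $\Lambda$-equivariant; together these give $\omega_n\cdot\iota(A_n^-) = \iota(\omega_n A_n^-) = 0$, i.e.\ $\omega_n\cdot pA_{n+1}^-=0$. Combined with the containment $\omega_n A_{n+1}^- \subseteq A_{n+1}^-[p] \subseteq pA_{n+1}^-$ from the preceding step, this forces $\omega_n^2 A_{n+1}^- = 0$; since $p$ is odd, $\omega_n^{p-1}\in\omega_n^2\Lambda$ annihilates $A_{n+1}^-$. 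Using \rf{norm} to write $\nu_{n+1,n}-p = \omega_n^{p-1}+p(u(\omega_n)-1) = p\omega_n w$ with $u-1 = \omega_n w\in\omega_n\Lambda$, we obtain $(\nu_{n+1,n}-p)x = pw(\omega_n x) = w(p\omega_n x) = 0$ because $\omega_n x$ is $p$-torsion; hence $\iota(Nx) = \nu_{n+1,n}x = px$. Finally, \rf{ordinc2} is the specialization $x = a_{n+1}$, using $Na_{n+1}=a_n$ from coherence.
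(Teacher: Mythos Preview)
Your proof follows essentially the same route as the paper's: apply Lemma~\ref{ab} to the pair $(A_n^-,A_{n+1}^-)$, deduce $\iota(A_n^-)=pA_{n+1}^-$, observe that $\omega_n$ annihilates $\iota(A_n^-)$ hence $p\omega_n x=0$ and $\omega_n^2 x=0$, and read off $\nu_{n+1,n}x=px$ from the expansion \rf{norm}. The paper writes this using $N'=p+\omega_n\bigl(\binom{p}{2}+\omega_n w\bigr)$ rather than $N'=pu(\omega_n)+\omega_n^{p-1}$, but the computation is the same.

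There is one genuine gap: your justification of the injectivity of $\iota$ is circular. You argue $|\iota(A_n^-)|\le |pA_{n+1}^-|=|A_n^-|$, but this inequality is the trivial $|\iota(A_n^-)|\le |A_n^-|$ and says nothing about the kernel of $\iota$. (To invoke $\iota(A_n^-)\subseteq pA_{n+1}^-$ you are already using the part of Lemma~\ref{ab} that presupposes the hypotheses you are checking, and in any case the resulting inequality points the wrong way.) The paper does not prove injectivity here either; it takes it as a standing fact for the minus part in CM extensions containing $\mu_p$ (no capitulation on $A_n^-$), as noted in \S2.1. You should simply cite that fact rather than attempt a cardinality argument.

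A minor point of presentation: the displayed identity $\nu_{n+1,n}-p=\omega_n^{p-1}+p(u(\omega_n)-1)=p\omega_n w$ is false as an equation in $\Lambda$; what you mean is that it holds after applying both sides to $x\in A_{n+1}^-$, since you have already shown $\omega_n^{p-1}x=0$. Once that is clarified, your computation $(\nu_{n+1,n}-p)x=w\cdot(p\omega_n x)=0$ is correct and matches the paper's.
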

\begin{proof} 
We let $n > n_0$. Since $\rg{A}^-$ is $\Z_p$-torsion free, we may also assume
that $\sexp(A^-_n) > p$. We use the notations from Lemma \ref{ab} and let
$\iota = \iota_{n,n+1}, N = N_{n+1,n}$ and $N' = \nu_{n+1,n}$.

For proving \rf{ordinc2}, thus $p x = \iota(N(x)) = N'(x)$, we consider the
  development $ t:=\omega_n = (T+1)^{p^n} - 1$ and
  \[ N' = p + t \cdot v = p + t \left(\binom{p}{2} + t w )\right),
  \quad v, w \in \Z[ t ], \] as follows from the binomial development of $N' =
  \frac{(t+1)^p-1}{t}$. By definition, $t$ annihilates $A^-_n$ and a fortiori
  $\iota(A^-_n) \subset A^-_{n+1}$; therefore, for arbitrary $x \in A^-_{n+1}$
  we have $(p t) x = t (p x) = t \iota(x_1) = 0$, where the existence of $x_1$
  with $p x = \iota(x_1), x_1 \in A^-_n$ follows from Lemma \ref{ab}. Since
  $\iota$ is injective and thus rank preserving, we deduce that $t x \in
  A^-_{n+1}[ p ] = \iota(A^-_n[p])$, which is the first claim in
  \rf{ordinc}. Then
  \[ t^2 x = t \cdot ( t x ) = t x_2 = 0 , \quad \hbox{ since $x_2 = t
    x \in \iota(A^-_n)$}. \] Using $t^2 x = p t x = 0$, the above development
  for $N'$ plainly yields $N' x = p x$, as claimed. Injectivity of the lift
  map then leads to \rf{ordinc}. Indeed, for $a = (a_n)_{n \in \N}$ and $n >
  n_0$ we have
\begin{eqnarray*}
  \ord(a_n) & = & \ord(\iota_{n+1,n}(a_n)) = \ord( \iota_{n+1,n} \circ
  N_{n+1,n} (a_{n+1})) \\ & = & \ord( \nu_{n+1,n} a_{n+1} ) = \ord( p a_{n+1} )
  = \ord(a_{n+1})/p.
\end{eqnarray*}
This completes the proof.
\end{proof}
\begin{remark}
\label{ordgen}
The restriction to the minus part $\rg{A}^-$ is perfectly compatible with the context of this paper. However, we note that Lemma \ref{ab} holds as soon as $\sexp(A) > p$. As a consequence, all the facts in Proposition \ref{sf} 
hold true for arbitrary cyclic modules $\Lambda a$ with $\ord(a) = \infty$. The proof being algebraic, it is not even necessary to assume that $\K_{\infty}$ is the cyclotomic $\Lambda$-extension of $\K$, it may be any $\Z_p$-extension and $\rg{A} = \varprojlim_n A_n$ is defined with respect to the $p$-Sylow groups of the class groups in the intermediate levels of $\K_{\infty}$. The field $\K$ does not need to be CM either. The Proposition \ref{sf} is suited for applications in Kummer theory, and we shall see some in the Chapter 4. This remark shows that the applications reach beyond the frame imposed in this paper.
\end{remark}

As a consequence we have the following elegant description of the growth of
orders of elements in $A_n^-$:
\begin{lemma}
\label{Gabriele}
Let $\K$ be a CM field and $A_n, \rg{A}$ be defined as above, with
$\mu(\rg{A}) = 0$. Then there exists an $n_0 > 0$ which only depends on $\K$,
such that:
\begin{itemize}
\item[ 1. ] $\prk(A^-_n) = \prk(A^-_{n_0}) = \lambda^-$ for $n \geq n_0$,
\item[ 2. ] For all $a = (a_n)_{n \in \N} \in \rg{A}^-$ there is a $z = z(a)
\in \Z$ such that, for all $n \geq n_0$ \rf{stab} holds.
\end{itemize}
\end{lemma}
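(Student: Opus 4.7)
The plan is to read Lemma \ref{Gabriele} as a straightforward packaging of Lemma \ref{fukuda} with Proposition \ref{sf}. First, I would take $n_0$ to be the stabilization index furnished by Lemma \ref{fukuda}: it depends only on $\K$ (and the hypothesis $\mu(\rg{A})=0$), and already yields item 1, namely $\prk(A^-_n)=\prk(A^-_{n_0})=\lambda^-$ for every $n\geq n_0$. Enlarging $n_0$ if necessary by a bounded amount I may also assume $\sexp(A^-_n)>p$ for $n\geq n_0$; this holds because $\rg{A}^-$ is $\Z_p$-torsion free and $A^-_n$ is the image of $\rg{A}^-$ modulo $\omega_n \rg{A}^-$, so the subexponent grows with $n$. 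With this choice of $n_0$ the hypotheses of Proposition \ref{sf} are in force on every submodule of $\rg{A}^-$, and in particular on the cyclic module $\Lambda a$ for any coherent sequence $a$.

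Item 2 then unwinds from Proposition \ref{sf} applied to $a=(a_n)$. The identity $\nu_{n+1,n}(a_{n+1})=p\,a_{n+1}=\iota_{n,n+1}(a_n)$ of \rf{ordinc2}, rewritten in the multiplicative convention used for class groups, is exactly $a_{n+1}^{\,p}=\iota_{n,n+1}(a_n)$, which is the second assertion of \rf{stab}. For the order formula I would invoke the closing computation in the proof of Proposition \ref{sf}:
\[
\ord(a_n)=\ord\bigl(\iota_{n,n+1}(a_n)\bigr)=\ord\bigl(\nu_{n+1,n}\,a_{n+1}\bigr)=\ord(p\,a_{n+1})=\ord(a_{n+1})/p,
\]
valid for $n\geq n_0$ because $\iota_{n,n+1}$ is injective in this range. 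Iterating gives $\ord(a_n)=p^{\,n-n_0}\,\ord(a_{n_0})$, and since $a_{n_0}\in A^-_{n_0}$ is an element of a finite $p$-group its order is a power of $p$; writing $\ord(a_{n_0})=p^{\,n_0+z}$ with $z=z(a)\in\Z$ yields $\ord(a_n)=p^{\,n+z}$ for all $n\geq n_0$, as required. The trivial case $a=0$ is excluded in the intended interpretation (or one sets $z=-\infty$ by convention), consistent with the fact that the formula asserts infinite order along the tower.

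The main, and rather mild, obstacle is the bookkeeping step of confirming that a single $n_0$ depending only on $\K$ simultaneously ensures Fukuda's stabilization, injectivity of all lift maps $\iota_{n,n+1}$ on $\rg{A}^-$, and the subexponent condition $\sexp(A^-_n)>p$ used silently in Proposition \ref{sf}. All three facts are uniform in $a\in\rg{A}^-$ (the first two depend only on $\K$, the third on $\K$ and $\mu=0$), so after fixing $n_0$ large enough for all three, items 1 and 2 of the lemma follow at once from the results already established. No fresh algebra is needed; the content of Lemma \ref{Gabriele} is essentially a tidy restatement of Proposition \ref{sf} in the form most convenient for the Kummer-theoretic applications of later chapters.
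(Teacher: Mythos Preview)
Your proposal is correct and follows essentially the same route as the paper: invoke Lemma \ref{fukuda} for the existence of $n_0$ and item 1, then apply Proposition \ref{sf} (specifically \rf{ordinc} and \rf{ordinc2}) to obtain $a_{n+1}^p=\iota_{n,n+1}(a_n)$ and iterate the order identity to produce $z$. The paper's own proof is simply a two-sentence compression of exactly this; your extra bookkeeping about $\sexp$ and injectivity of $\iota$ is harmless but unnecessary, since in the CM minus setting the lift maps are injective at every level.
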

\begin{proof}
The existence of $n_0$ follows from Lemma \ref{fukuda} and relation
\rf{ordinc} implies that $\ord(a_{n}) = p^{n-n_0} \ord(a_{n_0})$ for all $n
\geq n_0$, hence the definition of $z$. This proves point 2 and \rf{stab}.
\end{proof}
The above identities show that the structure of $\id{A}$ is completely
described by $\id{A}_{n_0}$: both the rank and the annihilator $f_a(T)$ of
$\id{A}$ are equal to rank and annihilator of $\id{A}_{n_0}$. Although
$\id{A}_{n_0}$ is a finite module and thus its annihilator ideal is not
necessarily principal, since it also contains $\omega_{n_0+1}$ and $p^{n+z}$,
the polynomial $f_a(T)$ is a distinguished polynomial of least degree, contained
in this ideal. Its coefficients may be normed by choosing minimal
representatives modulo $p^{n+z}$. It appears that the full information about
$\id{A}$ is contained in the \textit{critical section} $\{ \id{A}_n : n \leq
n_0 \}$.
\subsection{The case of increasing ranks}
In this section we shall give some generic results similar to Lemma
\ref{ab}, for the case when the groups $A$ and $B$ have distinct
ranks. Additionally, we assume that the groups $A$ and $B$ are endowed
with a common group action which is reminiscent from the action of
$\Lambda$ on the groups $\id{A}_n$ of interest.

The assumptions about the groups $A, B$ will be loaded with additional
premises which are related to the case $A = \id{A}_n, B = \id{A}_{n+1}$. We
define:
\begin{definition}
\label{dcontrans}
A pair of finite abelian $p$-groups $A, B$ is called a \textit{conic
transition}, if the following hold:
\begin{itemize}
\item[ 1. ] $A, B$ are abelian $p$-groups written additively and $N: B \ra A$
and $\iota: A \ra B$ are linear maps which are surjective, respectively
injective. Moreover $N \circ \iota = p$ as a map $B \ra B$. The ranks are $r =
\prk(A) \leq r' = \prk(B)$. Note that for $r=r'$ we are in the case of Lemma
\ref{ab}, so this will be considered as a {\em stable} case.
\item[ 2. ] There is a finite cyclic $p$-group $G = \Z_p \tau$ acting on $A$
and $B$, making $B$ into cyclic $\Z_p[ \tau ]$ - modules. We let $T = \tau-1$.
\item[ 3. ] We assume that there is a polynomial $\omega(T) \in
\rg{R}:=\ZM{p^N}[ T ]$, for $N > 2 \exp(B)$, with 
\begin{eqnarray*}
N & = &\frac{(\omega(T)+1)^p-1}{\omega} \in \rg{R}, \\
\omega & \equiv & T^{\deg(\omega)} 
\bmod p \Z_p[ T ], \quad \hbox{and} \quad \omega \equiv 0 \bmod T.
\end{eqnarray*}
In particular, \rf{norm} holds; we write $d = \deg(\omega(T)) \geq 1$.
We also assume that $\omega A = 0$.
\item[ 4. ] The kernel $K := \Ker(N : B \ra A) \subset B$ is assumed to verify
$K = \omega B$ and if $x \in B$ verifies $\omega x = 0$, then $x \in \iota(A)$.
\item[ 5. ] There is an $a \in A$ such that $a_i = T^i a, i = 0, \ldots, r-1$
form a $\Z_p$-base of $A$, and $a_0 = a$.
\end{itemize}
The transition is \textit{regular} if $r' = p d$; it is regular \textit{flat},
if $\sexp(B) = \exp(B)$ and it is regular wild, if it is regular and $\exp(B)
> \sexp(B)$. It is \textit{initial} is $r = d = 1$ and it is \textit{terminal}
if $r' < p d$. If $r = r'$, the transition is called \textit{stable}. The
module associated to the transition $(A,B)$ is the \textit{transition module}
$\id{T} = B/\iota(A)$.  We shall write $\nu = \iota \circ N : B \ra B$. Then
$\nu = \nu(T)$ is a polynomial of degree $\deg(\nu) = (p-1) d$ and $\omega
\nu$ annihilates $B$.

We introduce some notions for the study of socles. Let $\varpi: B \ra \N$ be 
the map $x \ra \ord(x)/p$ and $\psi: B \ra S(B)$ be given by $x \mapsto 
\varpi(x) \cdot x$, a $\Z_p$-linear map. Let $\Omega(b) = \{ q_i := 
\varpi(T^i b) \ : \ i = 0, 1, \ldots, r'-1\}$. Then $q_0 \geq q_1 \geq \ldots 
q_{r'-1}$. The {\em jumps} of $\Omega(b)$ are the set 
\[ J := \{ i \ : \ q_i > q_{i+1} \} \subset \{ 0, 1, \ldots, r'-2\} . \]
We shall write 
\[ B_j := \sum_{i = 0}^{j} \Z_p T^i b \subset B, \quad 0 \leq j < r'. \]
\end{definition}
We consider in the sequel only transitions that are not stable, thus we
assume that $r < r'$. We show below that point 4 of the definition reflects
the specific properties of conic modules, while the remaining ones are of
general nature and apply to transitions in arbitrary cyclic
$\Lambda$-modules. Throughout this chapter, $a$ and $b$ are generators of $A$
and $B$ as $\Z_p[ T ]$-modules. Any other generators differ from $a$ and $b$
by units.

We start with an elementary fact which holds for finite cyclic $\Z_p[ T
]$-modules $X$, such as the elements of conic transitions.
\begin{lemma}
\label{byun} 
If $(A, B)$ be a conic transition. If $y, z \in B \setminus p B$ are such
that $y - z \in p B$, then they differ by a unit:
\begin{eqnarray}
\label{unitdif}
\quad \quad y, z \not \in p B, \quad y-z \in p B \quad \Rightarrow \quad
\exists \ v(T) \in (\Z_p[ T ]^{\times}), \ z = v(T) y.
\end{eqnarray}

Moreover, if $S(A), S(B)$ are $\F_p[ T ]$-cyclic and $y \in B \setminus \{ 0
\}$ is such that $T y \in T S(B)$, then either $y \in S(B)$ or there are $a'
\in \iota(a)$ and $z \in S(B)$ such that
\begin{eqnarray}
\label{presoc}
y = z + a', \quad T a' = 0, \quad \ord(a') > p.
\end{eqnarray}
\end{lemma}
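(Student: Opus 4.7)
The plan is to work inside the local Artinian ring $R := \Z_p[T]/\ann(B)$. Since $B$ is cyclic with generator $b$, there is a canonical identification $B \cong R$ of $R$-modules. Because $\ann(B)$ contains $\omega\nu$ with $\omega\nu \equiv T^{pd} \bmod p$, the reduction $R/pR$ is isomorphic to $\F_p[T]/(T^{r'})$, whose unique maximal ideal is $(T)$. Hence $R$ itself is local with maximal ideal $\eu{m} = (p, T)$, its units are the elements of nonzero constant term modulo $p$, and membership in $pB$ corresponds to membership of the representative in $pR$.

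For the first claim, write $y = f b$ and $z = g b$ with $f, g \in R$. The hypotheses become $f, g \notin pR$ and $f - g \in pR$. Reducing modulo $p$, the common nonzero image of $f$ and $g$ in $R/pR$ has the form $T^j \bar u$ with $\bar u$ a unit and $0 \le j < r'$; a Weierstrass-type lift produces $f = T^j u + p f_1$ and $g = T^j u + p g_1$ with a common unit $u \in R$ (any two lifts of $\bar u$ differ by $1 + p \alpha$, which can be absorbed into the remainder terms). The task is then to find $v \in R^\times$ with $(v - 1) f = p(g_1 - f_1)$. The structural relation $\omega \nu = 0$, combined with $\nu = \omega^{p-1} + p u_0$ for a unit $u_0$, translates into the explicit identity $p \omega = -\omega^p u_0^{-1}$ in $R$, and together with the Weierstrass form of $f$ this produces an adjustment $\beta \in \eu{m}$ with $f \beta = p(g_1 - f_1)$. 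Setting $v := 1 + \beta$ yields $v \in R^\times$ with $z = v y$.

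For the second claim, the list of symbols for conic transitions gives $\ann(B) = (\omega \nu)$. From $T y \in T S(B)$, choose $w \in S(B)$ with $T y = T w$, and set $a' := y - w$, so that $T a' = 0$. If $p a' = 0$ then $a' \in S(B)$ and $y = w + a' \in S(B)$, giving the first alternative. Otherwise $\ord(a') > p$; it remains to show $a' \in \iota(A)$. Writing $a' = h b$ with $h \in \Z_p[T]$, the condition $T h \in \ann(B) = (\omega \nu)$ reads $T h = k \omega \nu$ for some $k \in \Z_p[T]$. Using the axiom $T \mid \omega$ (Definition \ref{dcontrans}.3) to write $\omega = T \omega'$, and using that $T$ is not a zero divisor in $\Z_p[T]$, one cancels to obtain $h = k \omega' \nu$. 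Therefore $a' = k \omega' \nu b \in \nu B = \iota(N(B)) = \iota(A)$, and setting $z := w$ completes the decomposition $y = z + a'$.

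The main obstacle is the first claim, where the zero-divisor structure of $R$ (both $\omega$ and $\nu$ are zero-divisors via $\omega \nu = 0$) makes the inclusion $p R \subseteq f R$ nontrivial and the choice of $\beta \in \eu{m}$ delicate. The second claim, by contrast, is essentially immediate once one exploits the divisibility $T \mid \omega$ and the explicit form of $\ann(B)$.
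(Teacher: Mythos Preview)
Your second-claim argument contains a real error: the identity $\ann(B) = (\omega\nu)$ cannot hold. The group $B$ is a finite $p$-group, whereas $\Z_p[T]/(\omega\nu)$ is a free $\Z_p$-module of rank $pd$ (since $\omega\nu$ is a distinguished polynomial of that degree) and hence infinite. The list of symbols only records that $\omega\nu$ is \emph{an} annihilator of $B$, i.e.\ $\omega\nu \in \ann(B)$; the annihilator ideal also contains $p^N$, the exponent of $B$, and so on. Consequently your step ``$Th \in \ann(B) = (\omega\nu)$, so $Th = k\,\omega\nu$, cancel $T$'' is unsupported. The paper's argument here is both correct and much shorter: from $T a' = 0$ and $T \mid \omega$ (axiom 3 of Definition~\ref{dcontrans}) one has $\omega a' = 0$, and axiom 4 of the same definition then gives $a' \in \iota(A)$ immediately --- no computation with $\ann(B)$ is needed at all.

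For the first claim your framework is reasonable, but as you yourself flag, the construction of $\beta$ is left undone: the single relation $p\omega = -\omega^p u_0^{-1}$ does not by itself yield $p(g_1 - f_1) \in fR$ for an arbitrary $f \notin pR$, and the zero-divisor structure you mention is a genuine obstruction (in a ring such as $\Z/p^2[T]/(T^2)$ the analogous statement fails). The paper takes a different route that sidesteps this difficulty. Rather than trying to prove $pR \subseteq fR$, it argues that any $y \notin pB$ admits a representation $y = g(T)\,b$ with $g(T) = T^k g_1(T)$ and $g_1(0) \in \Z_p^{\times}$, where $k$ is read off from the $\F_p[T]$-ideal $\overline y\,\F_p[T] = T^k R(B)$ in the roof $R(B) = B/pB$. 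Since $\overline y = \overline z$, the integer $k$ is the same for $z$, say $z = T^k h_1(T)\,b$ with $h_1$ a unit; then $z = (h_1 g_1^{-1})\,y$ and $v = h_1 g_1^{-1}$ is the required unit. The gain is that one never has to divide by $f$ inside $R$: one divides by the \emph{unit} $g_1$ instead, and the $T^k$ factors on the two sides are literally equal.
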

\begin{proof}
Let $b \in B$ generate this cyclic $\Z_p[ T ]$ module. Then $R(B) = B/p B$ is 
a cyclic $\F_p[ T ]$ module; since $\tau^{p^M} = 1$ for some
$M > 0$, it follows that $(T+1)^{p^M} - 1 = T^{p^M} = 0 \in \F_p[ T ]$, so the
element $T$ is nilpotent.

Let $y \in B$ with image $0 \neq \overline{y} \in R(B)$. Then there is a $k
\geq 0$ such that $T^k b \F_p[ T ] R(B) = y \F_p[ T ] R(B)$: consider
the annihilator ideal of the image $b' \in B/(p B, y)$. Since $b$ is a 
generator, we also have $y = g(T) b, g \in \Z_p[ T ]$. The above shows that 
$g(T) \equiv 0 \bmod T^k$, so let $g(T) = T^k g_1(T)$ with $g_1(T) = \sum_{j 
\geq 0} c_j T^j, \ c_j \in \Z_p$. The above equality of ideals in $B / p B$ 
implies that $c_0 \in \Z_p^{\times}$, since otherwise $T^k B \F_p[ T ] R(B) 
\supsetneq y \F_p[ T ] R(B)$. Therefore $g(T) \in (\Z_p[ T ])^{\times}$. 
Applying the same fact to $y, z$, we obtain \rf{unitdif} by transitivity.

Finally suppose that $0 \neq T y \in S(B) = B[ p ]$. Then $y \neq 0$; if
$\ord(y) = p$ then $y \in S(B)$ and we are done. Suppose thus that $\ord(y) =
p^e, e > 1$ and let $y' = p^{e-1} y \in S(B)$. The socle $S(B)$ is $\F_p[ T ]$
cyclic, so there is a $z \in S(B)$ such that $T y = T z \in T S(B)$. Then
$T(y-z) = 0$ and $y-z \in \iota(A)$ by point 4 of the definition
\ref{dcontrans}; therefore $y = z + a', a' \in \iota(a)$. Moreover, $\ord(y) =
\ord(a') > p$ while $T y = T z + T a'$, thus $T a' = 0$. This confirms
\rf{presoc}.
\end{proof}

\subsection{Transition modules and socles}
The following lemmata refer to conic transitions. We start with several 
results of general nature, which will then be used in the next section
for a case by case analysis of transitions and minimal polynomials.
\begin{lemma}
\label{cycsoc}
The following facts hold in conic transitions:
\begin{itemize}
 \item[(i)] Suppose that $S(A)$ is $\F_p[ T ]$-cyclic; then the socle $S(B)$
is also a cyclic $\F_p[ T ]$-cyclic module.
\item[(ii)] Let $x^{\top} = \{ t \in \Z_p[ T ] \ : \ t x = 0 \}$ be the
annihilator ideal of $x$ and $\overline{\omega} \in \Z_p[ T ]$ be a
representant of the class $(\omega \bmod b^{\top}) \in B/b^{\top}$. We have
\begin{eqnarray}
\label{sockern}
\iota(S(A)) & \subset & S(K), \quad K \cap \iota(A) = \iota(S(A)), \quad
\hbox{and} \quad \\
\label{exactk}
K & = & \overline{\omega} B = a^{\top} B. 
\end{eqnarray}
\end{itemize}
\end{lemma}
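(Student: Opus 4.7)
The claims in (ii) are essentially formal and I would dispatch them first. The inclusion $\iota(S(A)) \subset S(K)$ is immediate: for $x \in S(A)$, $p\,\iota(x) = \iota(px) = 0$ and $N(\iota(x)) = px = 0$, placing $\iota(x)$ in $S(B) \cap K = S(K)$. The equality $K \cap \iota(A) = \iota(S(A))$ follows by the same token, since $\iota$ is injective and $N \circ \iota = p$: any $\iota(y) \in K$ satisfies $py = N(\iota(y)) = 0$, forcing $y \in S(A)$. For the chain $K = \overline{\omega} B = a^\top B$, axiom 4 of Definition \ref{dcontrans} already gives $K = \omega B$. Since $\omega$ annihilates $A$ by axiom 3, we have $\omega \in a^\top$, so $\omega B \subset a^\top B$. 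Conversely, the $\Z_p[T]$-equivariance of $N$ (inherited from the compatible $G$-action, exactly as for Iwasawa norms) yields $N(fb) = f\,N(b) = 0$ for every $f \in a^\top$, since $N(b)$ generates $A$ and is thus killed by $a^\top$; as $b$ generates $B$, this gives $a^\top B \subset K$ and closes the loop.

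For (i) my plan is to reduce the cyclicity of $S(B)$ to the single identity $\dim_{\F_p} S(B)[T] = 1$. From identity \rf{norm}, $\omega \nu = (\omega + 1)^p - 1 \equiv \omega^p \pmod{p}$, and $\omega \equiv T^d \pmod{p}$ by axiom 3, so $\omega\nu \equiv T^{pd} \pmod p$; since $\omega\nu$ annihilates $B$ and $p$ annihilates $S(B)$, the element $T^{pd}$ annihilates $S(B)$. Hence $S(B)$ is a finitely generated module over the local Artinian principal ideal ring $\F_p[T]/(T^{pd})$, and the structure theorem decomposes $S(B)$ as a direct sum of cyclic modules $\F_p[T]/(T^{m_i})$ whose number of summands equals $\dim_{\F_p} S(B)[T]$. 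Cyclicity of $S(B)$ is thus equivalent to the identity $\dim_{\F_p} S(B)[T] = 1$.

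The key step is identifying $S(B)[T]$ with $\iota(S(A)[T])$. Given $x \in S(B)[T]$, axiom 3 provides $\omega' \in \rg{R}$ with $\omega = T\omega'$ (since $\omega \equiv 0 \bmod T$), hence $\omega x = \omega'(T x) = 0$. Axiom 4 then places $x$ in $\iota(A)$, and writing $x = \iota(y)$, the injectivity and $T$-equivariance of $\iota$ force $py = 0$ and $Ty = 0$, i.e.\ $y \in S(A)[T]$; the reverse inclusion $\iota(S(A)[T]) \subset S(B)[T]$ is immediate. Since $S(A)$ is $\F_p[T]$-cyclic by hypothesis and annihilated by $\omega \equiv T^d$, it is isomorphic to $\F_p[T]/(T^k)$ for some $k \leq d$; its $T$-socle is therefore one-dimensional over $\F_p$ (or trivial, in which case axiom 4 combined with Nakayama on $K = \omega B$ already reduces $B$ to zero). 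Hence $\dim_{\F_p} S(B)[T] = 1$, completing the argument. The only delicate point I foresee is the legitimacy of the step $\omega x = 0$ from $Tx = 0$ needed to invoke axiom 4; once the factorization $\omega = T\omega'$ in $\rg{R}$ is in hand everything else is routine.
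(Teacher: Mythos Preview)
Your proof is correct and follows essentially the same route as the paper's: for (i) both arguments reduce cyclicity to $\dim_{\F_p} S(B)[T] = 1$ by showing $S(B)[T] \subset \iota(S(A)[T])$ via the factorization $\omega = T\omega'$ and axiom~4, and for (ii) both verify the inclusions directly from $N\circ\iota = p$, axiom~4, and the $\Z_p[T]$-equivariance of $N$. The only cosmetic difference is that you invoke the structure theorem over $\F_p[T]/(T^{pd})$ where the paper cites its Lemma~\ref{trank}, which packages the same fact.
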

\begin{proof}
The point (i) follows from Lemma \ref{trank}. Indeed, $S(B) \supseteq 
\iota(S(A))$ are elementary $p$-groups by definition. If $x \in S(B)[ T ]$, 
then $T x = 0$ and point 3 of the definition of conic transitions implies 
that $x \in \iota(A) \cap S(B) = \iota(S(A))$. Thus $S(B)[ T ] \subseteq 
\iota(S(A))[ T ]$ and since $S(A)$ is $\F_p[ T ]$ cyclic, we know that 
$\prk(\iota(S(A))[ T ]) = \prk(S(B)[T]) = 1$. The Lemma \ref{trank} implies 
that the $T$-rank of $S(B)$ is one and $S(B)$ is cyclic as an 
$\F_p[ T ]$-module.

Let now $x \in \iota(S(A))$, so $\omega x = p x = 0$. Then $N x = (p u +
\omega^{p-1}) x = 0$, and thus $x \in S(K)$. If $x' \in \iota(A) \cap K$, then
$N x' = p x' = 0$ and thus $x' \in S(K) \cap \iota(A) = \iota(S(A))$, showing
that \rf{sockern} is true.

By point 4. of the definition of conic transitions, we have $K = \omega B =
\omega \Z_p[ T ] b$ and since $\omega$ acts on $b$ via its image modulo the
annihilator of this generator, it follows that $K = \overline{\omega} b$ for
any representant of this image in $\Z_p[ T ]$. For $t \in a^{\top}$ we have
$N( t b ) = t N(b) = t a = 0$; conversely, if $x = t' b \in K$, then $N (t' b)
= t' N( b ) = t' a = 0$ and thus $t' \in a^{\top}$. We thus have $K = a^{\top}
B$, which confirms \rf{exactk} and completes the proof of (ii).
\end{proof}
An important consequence of the structure of the kernel of the norm is
\begin{corollary}
\label{termt}
Let $(A, B)$ be a transition with $r < d$. Then $r' = r$.
\end{corollary}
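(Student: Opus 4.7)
The approach is to compute the $\F_p$-dimension of the roof $R(A) = A/pA$ in two ways: first by exhibiting $R(B)$ as a cyclic truncated $\F_p[T]$-module, and then by expressing $A$ as a quotient of $B$ via the norm. Comparing the two descriptions will force the equality $r = \min(r',d)$, and the hypothesis $r < d$ will then make the corollary immediate.

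First, because $B$ is a cyclic $\Z_p[T]$-module by point~2 of Definition~\ref{dcontrans}, its roof $R(B) = B/pB$ is a cyclic $\F_p[T]$-module. Since $\tau$ generates a finite cyclic $p$-group, $T = \tau - 1$ is nilpotent on $R(B)$, and $\F_p[T]$ being a PID, the annihilator of $R(B)$ must be a pure power $(T^k)$. Counting $\F_p$-dimensions gives $k = \dim_{\F_p} R(B) = r'$, so $R(B) \cong \F_p[T]/(T^{r'})$. Next, the surjection $N : B \to A$ has kernel $K = \omega B$ by point~4 and is $\Z_p[T]$-equivariant, yielding $A \cong B/\omega B$ as $\Z_p[T]$-modules. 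Passing to roofs and using the congruence $\omega \equiv T^d \pmod{p\,\Z_p[T]}$ of point~3 one obtains
\[
 R(A) \;\cong\; B/(\omega B + pB) \;=\; B/(T^{d}B + pB) \;\cong\; R(B)/T^{d}R(B) \;\cong\; \F_p[T]/\bigl(T^{\min(r',d)}\bigr).
\]
Hence $r = \prk(A) = \dim_{\F_p} R(A) = \min(r',d)$.

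Under the hypothesis $r < d$, the minimum $\min(r',d)$ cannot equal $d$, so necessarily $r' < d$ and therefore $\min(r',d) = r'$, giving $r = r'$ as claimed. The only subtle point in the whole argument, and the one I would verify most carefully, is the identification $A \cong B/\omega B$ as \emph{$\Z_p[T]$-modules} (and not merely as abelian groups); this follows from the first isomorphism theorem applied to the equivariant map $N$ together with the equality $K = \omega B$ provided by point~4 of the definition of a conic transition. Everything else reduces to the standard fact that a cyclic module over $\F_p[T]$ on which $T$ is nilpotent is a truncated polynomial ring.
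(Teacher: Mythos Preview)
Your proof is correct and in fact cleaner than the paper's. The paper argues by taking a minimal annihilator $\theta = T^{r} + p\,g(T)$ of $a$, observing via \rf{exactk} that $\theta b \in K = \overline{\omega}\,B$, writing $\theta b = \omega\,y\,b$ with $y \notin p\,\Z_p[T]$, and then applying Weierstrass preparation to the element $T^{r} + p\,g(T) - \omega\,y$ (using $d > r$) to extract a distinguished polynomial of degree $r$ in $b^{\top}$, forcing $\prk(B) = r$. Your route bypasses all of this by passing immediately to roofs: from $A \cong B/\omega B$ (first isomorphism theorem for the $T$-equivariant surjection $N$ with kernel $\omega B$) and $\omega \equiv T^{d}\pmod{p}$ you read off $R(A) \cong R(B)/T^{d}R(B)$, hence $r = \min(r',d)$. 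This last identity is actually a sharper statement than the paper proves here, since it also tells you that any non-stable conic transition must have $r = d$; the paper only extracts the contrapositive case $r < d \Rightarrow r' = r$. The one point worth flagging explicitly (you do mention it) is that the $T$-equivariance of $N$ is not spelled out in Definition~\ref{dcontrans} but is used freely in the paper (e.g.\ in the proof of \rf{exactk}), so you are on safe ground invoking it.
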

\begin{proof}
Let $\theta = T^r + p g(T) \in a^{\top}$ be a minimal annihilator polynomial
of $a$. Then $\theta B \subset K = \overline{\omega} B$ so there is a $y \in \Z_p[ T ]$
such that $\theta b = \omega y b$ and since $\theta b \not \in p B$, it
follows that $y \not \in p \Z_p[ T ]$. Let thus $y = c T^j + O (p, T^{j+1})$
with $j \geq 0$ and $(c,p) = 1$. Then 
\[ \alpha = T^r + p g(T) - \omega \cdot y = T^r + T^{d+j} + O(p, T^{d+j+1}) \in
b^{\top}.\] 
Using $d > r$, Weierstrass Preparation implies that there is a distinguished
polynomial $h(T)$ of degree $r$ and a unit $v(T)$ such that $\alpha = h v$.
Since $v$ is a unit, $h \in b^{\top}$. But then $T^r b \in P$ and thus $B = P$
and $\prk(B) = r$, which confirms the statement of the Lemma.
\end{proof}
The corollary explains the choice of the signification of flat and terminal
transitions: a terminal transition can only be followed by a stable one.

We analyze in the next lemma the transition module $\id{T}$ in detail.
\begin{lemma}
\label{transmod}
Let $(A,B)$ be a conic transition and $\id{T} = B/\iota(A)$ be its transition
module. Then 
\begin{itemize}
 \item[ 1. ] The module $\id{T}$ is $\Z_p[ T ]$-cyclic, annihilated by $\nu$.
 and
\[ r' \leq r + (p-1) d. \]
Moreover
\begin{eqnarray}
\label{exps}
\exp(B) \leq p \exp(A),
\end{eqnarray}
and there is an $\ell(B)$ with $\ord(T^{\ell-1} b) = \exp(B) = p \cdot \ord(
T^{\ell} B)$. 
\item[ 2. ] If $S(A) \subset T A$ and $S(B)$ is folded, then $r' = r$.
\end{itemize}
\end{lemma}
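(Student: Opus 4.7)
The plan is to handle Part 1 in three substeps (cyclicity with annihilator, rank bound, exponent with jump) and then address Part 2 by contradiction. For the cyclicity and annihilator of $\id{T} = B/\iota(A)$, the image $\overline{b}$ of the $\Z_p[T]$-generator $b$ of $B$ generates $\id{T}$; and since $\iota\circ N$ acts as multiplication by $\nu$ on $B$ by the definition of conic transitions, every $\nu x$ lies in $\iota(A)$, so $\nu$ annihilates $\id{T}$. For the rank bound, the short exact sequence $0 \to \iota(A) \to B \to \id{T} \to 0$ yields, via the left-exact $p$-torsion functor, $\prk(B) \leq \prk(\iota(A)) + \prk(\id{T}) = r + \prk(\id{T})$. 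Since $\id{T}$ is $\Z_p[T]$-cyclic and killed by $\nu$, it is a quotient of $\Z_p[T]/(\nu)$; by identity \eqref{norm} the polynomial $\nu$ is distinguished of degree $(p-1)d$, so $\Z_p[T]/(\nu)$ is $\Z_p$-free of that rank, giving $\prk(\id{T}) \leq (p-1)d$ and $r' \leq r + (p-1)d$.

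The exponent bound $\exp(B) \leq p\exp(A)$ together with the jump statement is the delicate part. Since $B$ is $\Z_p[T]$-cyclic with generator $b$, we have $\ord(b) = \exp(B)$, because $\ord(b)$ annihilates every $g(T)b$; correspondingly $\ord(\nu b) = \ord(\iota(a)) = \ord(a) = \exp(A)$. Writing $\nu b = p u b + \omega^{p-1} b$ from \eqref{norm} and using $\omega\nu B = 0$, which rearranges to the internal identity $\omega^p = -pu\omega$ on $B$, I would compare orders term by term: any drop from $\ord(b)$ to $\ord(\nu b)$ exceeding a factor $p$ would force $\omega^{p-1} b$ into $\iota(A)$ and hence, by the identification $K \cap \iota(A) = \iota(S(A))$ from Lemma \ref{cycsoc}, into a submodule of order at most $p$, contradicting the assumed excess in $\ord(b)$. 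For the jump, let $\ell$ be minimal with $\ord(T^{\ell-1} b) = \exp(B) = p^f$ but $\ord(T^\ell b) < p^f$; this exists since $q_i$ is nonincreasing and eventually vanishes. Then $p^{f-1} T^{\ell-1} b$ is a nonzero element of $S(B)[T]$, which by Lemma \ref{cycsoc} is $\F_p$-one-dimensional, and a unit comparison (Lemma \ref{byun}) applied to $p^{f-2} T^{\ell-1} b$ pins the drop to exactly one power of $p$. The main obstacle is bookkeeping the interplay between the polynomial identity $\omega\nu B = 0$ and the $\Z_p[T]$-cyclicity of $B$.

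For Part 2, suppose for contradiction that $r' > r$, so $\id{T}$ has positive $p$-rank and there exists $y \in B \setminus pB$ whose image in $R(B)$ lies outside $\overline{\iota}(R(A))$. Using $S(A) \subset TA$ I would arrange that the socle contribution of $y$ comes from $T$-shifted elements, so that $y - z \in pB$ for some $z \in S(B)$; applying \eqref{presoc} from Lemma \ref{byun} then produces a pre-socle element $a' \in \iota(A)$ with $T a' = 0$ and $\ord(a') > p$. Combined with the assumption that $S(B)$ is folded, i.e.\ $\varpi(b)\cdot b$ does not generate $S(B)$ as an $\F_p[T]$-module, this forces the generator of $S(B)$ to lie in $\F_p[T] \cdot \varpi(b)b$ after all, a contradiction; hence $r' = r$. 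The delicate point here is activating both hypotheses simultaneously so that neither is wasted.
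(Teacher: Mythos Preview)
Your treatment of the cyclicity of $\id{T}$, its annihilation by $\nu$, and the rank bound $r' \le r + (p-1)d$ matches the paper's argument and is correct.

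For the exponent bound your sketch has a gap: you claim that a drop of more than one power of $p$ from $\ord(b)$ to $\ord(\nu b)$ would ``force $\omega^{p-1}b$ into $\iota(A)$'', but from $\omega^{p-1}b = \iota(a) - p\,u(\omega)\,b$ this would follow only if $p b \in \iota(A)$, which you have not established. The paper's route is different and more direct: with $q=\exp(A)$ one has $q\iota(a)=0$, hence $q\nu b = 0$, i.e.\ $p q\,u(\omega)\,b = -q\,\omega^{p-1} b$; it then argues that $q\,\omega^{p-1} b = 0$ by observing that $qB$ is a quotient of $\id{T}$ (since $\iota(A)\subset B[q]$) and comparing ranks, whence $p q b = 0$.

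The genuine problem is Part~2. Your plan via \eqref{presoc} never gets off the ground: to invoke that identity you need an element $y$ with $T y \in T S(B)$, and nothing you write produces one. The phrase ``arrange that the socle contribution of $y$ comes from $T$-shifted elements'' is not an argument, and the concluding ``forces the generator of $S(B)$ to lie in $\F_p[T]\cdot\varpi(b)b$ after all'' is asserted without justification. The paper's proof of Part~2 is short and uses neither \eqref{presoc} nor a roof comparison. It goes as follows. Since $S(B)$ is $\F_p[T]$-cyclic (Lemma~\ref{cycsoc}) and $\iota(S(A))\subset S(B)$ has $\F_p$-dimension $r$, one has $\iota(S(A)) = T^{r'-r}S(B)$. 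The hypothesis $S(A)\subset TA$ lets one write a generator of $S(A)$ as $T g(T)\,a$, so for a generator $s'$ of $S(B)$ there is a unit $v(T)$ with $T^{r'-r}s' = v(T)\,T g(T)\,\iota(a)$. Assuming $r'>r$, factor out one $T$: the element $T^{r'-r-1}s' - v(T)g(T)\iota(a)$ is annihilated by $T$, hence by $\omega$, hence lies in $\iota(A)$ by axiom~4 of Definition~\ref{dcontrans}. Therefore $T^{r'-r-1}s' \in \iota(A)\cap S(B) = \iota(S(A))$, which gives $r' = \prk(S(B)) \le (r'-r-1) + r = r'-1$, a contradiction. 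The key move you are missing is this ``pull one $T$ out and invoke axiom~4'' step; note also that the folded hypothesis on $S(B)$ plays no visible role in the paper's argument---it is $S(A)\subset TA$ together with axiom~4 that does the work.
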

\begin{proof}
Since $\nu(B) = \iota(A)$, it follows that $\nu(\id{T}) = 0$, 
showing that 
\[ r' = \prk(B) \leq \prk(A) + \prk(\id{T}) \leq \prk(A) + \deg(\nu) = r + 
(p-1) d.\]
which confirms the first claim in point 1.

Let now $q = \exp(A)$, so $q \iota(A) = 0$ and $q B \supseteq \iota(A)$. Thus
$\id{T}^{\top} \supseteq (B/q B)$. We let $\ell(B) = \prk(B/q B)$ and prove
the claims of the lemma. We have
\begin{eqnarray*}
p q u(T) b = - q T^{p-1} b + q \iota(a) = - q T^{p-1} b,
\end{eqnarray*}
Assuming that $q T^{p-1} b \neq 0$, we obtain $\prk(\id{T}) \leq (p-1) d <
\prk(B/q B)$, in contradiction with the fact that $B/qB$ is a quotient of
$\id{T}$. Therefore $q T^{p-1} b = 0$ and thus $p q u(T) b = 0$, so $\exp(B) =
p q$. Therefore, the module $q B \subset S(B)$ and it has has rank
$\ell(B)$. Let $s' \in S(B)$ be a generator. Comparing ranks in the $\F_p[ T
]$-cyclic module $S(B)$, we see that $T^{r'-\ell} s = q b v(T), v(T) \in
(\F_p[ T ])^{\times}$.

Suppose now that $S(B)$ is folded; then $\iota(S(A)) = T^k S(B), k = r'-r$. If
$s = T g(T) \iota(a)$ is a generator of $\iota(S(A))$, then $T^{r'-r} s' =
v(T) \iota(s), v(T) \in (\F_p[ T ])^{\times}$. Thus
\[ T ( T^{r'-r-1} s' - g(T) v \iota(a)) = 0, \]
and by point 4 of the definition of conic transitions, $T^{r'-r-1} s' \in 
\iota(S(A))$. But then 
\[ r' = \prk(S(B)) \leq \prk(\iota(S(A)) + r'-(r+1) = r + r' - (r+1) = r'-1.\]
This is a contradiction which implies that $r = r'$ and $(A,B)$ is in this 
case a stable transition.  
\end{proof}

In view of the previous lemma, we shall say that the transition $(A,B)$ is
\textit{wild} if $r' = p d$ and $S(B)$ is folded. The flat transitions are
described by:
\begin{lemma}
\label{lreg}
Let $(A, B)$ be a conic transition. The following conditions are equivalent:
\begin{itemize}
\item[(i)] The exact sequence 
\begin{eqnarray}
 \label{transseq}
 0 \ra \iota(A) \ra B \ra \id{T} \ra 0,
\end{eqnarray}
is split.
\item[(ii)] The jump-set $J(B) = \emptyset$,
\item[(iii)] The socle $S(B)$ is straight,
\item[(iv)] $\sexp(B) = \exp(B)$,
\end{itemize}
Moreover, if $(A,B)$ is a transition verifying the above conditions and
$\exp(A) = q$, then $\exp(B) = q$.
\end{lemma}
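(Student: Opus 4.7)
The plan is to first prove (ii) $\Leftrightarrow$ (iii) $\Leftrightarrow$ (iv) by combinatorial bookkeeping with the non-increasing sequence $q_0 \ge q_1 \ge \cdots \ge q_{r'-1}$, then to establish (iv) $\Rightarrow$ (i) together with the moreover via a $T$-rank argument, and finally to close the cycle with (i) $\Rightarrow$ (iv), which I expect to be the main obstacle.

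For (ii) $\Leftrightarrow$ (iv), since $R(B) = \F_p[T]\bar b$ is $\F_p[T]$-cyclic of $\F_p$-dimension $r'$, every $x \in B \setminus pB$ admits a representation $x = u(T) T^i b + py$ with $u(T) \in (\Z_p[T])^\times$ and $0 \le i < r'$. Because $\ord(py) \le \exp(B)/p < p q_0$, one has $\ord(x) = \ord(T^i b) = p q_i$. Hence $\exp(B) = p q_0$ and $\sexp(B) = p q_{r'-1}$, so (iv) is equivalent to $q_0 = q_{r'-1}$, i.e.\ $J(B) = \emptyset$. For (ii) $\Leftrightarrow$ (iii), compute $T^i \psi(b) = q_0 \, T^i b$, which equals $\psi(T^i b) \ne 0$ when $q_i = q_0$ and vanishes as soon as $q_i < q_0$; thus the cyclic $\F_p[T]$-module $\F_p[T]\psi(b)$ has $\F_p$-dimension equal to the index of the first jump, or $r'$ if none. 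Since $\dim_{\F_p} S(B) = r'$ by Lemma \ref{cycsoc}, $\psi(b)$ generates $S(B)$ iff $J(B) = \emptyset$.

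For the moreover and (iv) $\Rightarrow$ (i): under (iv), $B$ is a homogeneous $\Z_p$-module, $B \cong (\Z/p^e)^{r'}$ with $e = v_p(\exp B)$. The key observation is that $\iota(a) = \nu b \equiv \omega^{p-1} b \equiv T^{d(p-1)} b \pmod{pB}$ is nonzero in $R(B)$; indeed, $\iota(A) \subseteq pB$ together with the axiom $\ker \omega = \iota(A)$ would force a collapse in $R(B)$ incompatible with $r \ge 1$, so necessarily $d(p-1) < r'$. Hence $\iota(a)$ is maximal in $B$, and under (iv) $\ord(\iota(a)) = \sexp(B) = \exp(B)$, while also $\ord(\iota(a)) = \exp(A)$; this yields the moreover $\exp(A) = \exp(B)$. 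The same $T$-rank count applied to $T^i \iota(a) \equiv T^{i+d(p-1)} b \pmod{pB}$ forces $r' = r + d(p-1)$, so the induced map $R(\iota A) \hookrightarrow R(B)$ is injective, i.e.\ $\iota(A)$ is pure in $B$ at height $1$. Homogeneity of $B$ then propagates purity to every height, so $\iota(A)$ is a direct summand of $B$, giving (i).

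The hard part will be (i) $\Rightarrow$ (iv). Given a splitting $B = \iota(A) \oplus C$, I would decompose the cyclic generator as $b = \iota(y) + c$ and exploit the conic transition axioms, especially the identification $\iota(A) = \nu B$ and the isomorphism $\omega : \id{T} \xrightarrow{\sim} K$ coming from $K = \omega B$ with $\ker \omega = \iota(A)$, to compare the orders of the $T^i b$ across the two summands. Because the $T$-action does not respect the direct sum, the main technical point is to track the $T$-orbit of $b$ simultaneously in $\iota(A)$ and $C$; the outcome one must obtain is $\ord(T^i b) = \exp(B)$ for every $i < r'$, i.e.\ $J(B) = \emptyset$, which is (ii) and closes the cycle.
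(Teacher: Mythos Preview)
Your treatment of (ii) $\Leftrightarrow$ (iii) $\Leftrightarrow$ (iv) is essentially the paper's, though in the step $\ord(x) = pq_i$ from $x = u(T)T^i b + py$ you should invoke Lemma~\ref{byun} to upgrade the congruence $x \equiv u(T)T^i b \pmod{pB}$ to an equality $x = v(T)T^i b$ with $v$ a unit; the additive estimate via $\ord(py)\le \exp(B)/p$ does not by itself pin down $\ord(x)$.

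There is a genuine gap: you do not prove (i) $\Rightarrow$ (iv). You flag it as ``the hard part'' and sketch a plan involving tracking the $T$-orbit of $b$ through an arbitrary splitting, but no argument is given. In fact the paper dispatches this direction quickly, and your worry that ``the $T$-action does not respect the direct sum'' is misplaced. The point is that once the sequence splits one may take the \emph{specific} complement $B_{r'-r-1} = \sum_{i=0}^{r'-r-1}\Z_p T^i b$: the splitting forces $R(\iota A)\hookrightarrow R(B)$ to be injective, hence $r' = r+(p-1)d$ and the images $\bar b,\ldots,T^{r'-r-1}\bar b,\overline{\iota(a)},\ldots,T^{r-1}\overline{\iota(a)}$ give a basis of $R(B)$, so by Nakayama $B = \iota(A)\oplus B_{r'-r-1}$. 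From this concrete decomposition the socle decomposes as $S(B) = \iota(S(A))\oplus S(B_{r'-r-1})$, and since $\iota(S(A))\subset K$ (by \rf{sockern}) while the $\F_p[T]$-generator $\psi(b)$ lives in the complementary piece, straightness of $S(B)$ follows. So the direction you singled out as the obstacle is actually the shorter one.

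Your route for (iv) $\Rightarrow$ (i) via homogeneity and purity is different from the paper's contradiction argument through $\psi(\iota(a))$, and is conceptually clean, but two steps need more care. First, the ``collapse'' argument for $\iota(a)\notin pB$ is incomplete: $\ker(\omega\vert_B)=\iota(A)\subset pB$ does not by itself make $T^d$ injective on $R(B)$, since $\omega x\in pB$ is weaker than $\omega x=0$. One can rescue this when $\ord(a)>p$ by invoking Lemma~\ref{hasroot} to get $K=S(B)$ and then comparing $|A|$ with $|B|/p^{r'}$ under homogeneity, but you should spell this out. Second, your deduction of $r' = r+(p-1)d$ from ``the same $T$-rank count'' implicitly uses that $T^i\iota(a)\notin pB$ for all $i<r$, which under (iv) amounts to $\ord(T^i a)=\exp(B)$ for all $i<r$, i.e.\ $\sexp(A)=\exp(A)$; this is not among the hypotheses and is exactly why the paper's proof of the ``moreover'' proceeds by induction on the rank of $A$ rather than directly.
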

\begin{proof}
The conditions (ii) and (iv) are obviously equivalent: if $q = \sexp(B) =
\exp(B)$, then the exponent $q_i = \ord(T^i b) = q$ are all equal, and
conversely, if these exponents are equal, then $\sexp(B) = \exp(B)$: to see
this, consider $x \in B \setminus p B$ such that $\ord(x) = \sexp(B)$. Since
$0 \neq \overline{x} \in R(B)$, Lemma \ref{byun} shows that $x = T^k v(T), k
\geq 0, v \in (\Z_p[ T ])^{\times}$. Therefore $\ord(x) = q_k = q$, as
claimed.

Suppose that $S(B)$ is straight, so $\psi(b)$ generates $S(B)$. Then $T^j
\psi(b) \neq 0$ for all $0 \leq j < r'$ and thus $T^j \varpi(b) \cdot b =
\varpi(b) (T^j b) \neq 0$. Since $\ord(\varpi(b) T^j b) \leq p$, it follows
that the order is $p$ and $\varpi(b) = \varpi(T^j b)$, so $\ord(b) = \ord(T^j
b)$ for all $j$, and thus $\sexp(B) = \exp(B)$. Hence $(iii) \Rightarrow (ii),
(iv)$. Conversely, suppose that the socle is folded. Then let $\psi(T^k b)$ be
a generator of the socle, $k > 0$. The same argument as above shows that
$\ord(T^{k-1} b) > \ord(T^k b)$ and thus $J(B) \neq \emptyset$. Therefore
$(ii)-(iv)$ are equivalent.

We show that \rf{transseq} is split if $(ii)-(iv)$ hold. Suppose that
$\iota(a) \not \in p B$; then $\iota(a) = \nu b$ has non trivial image in
$R(B)$ and thus $\ord(a) = \ord(b)$. If \rf{transseq} is not split, then
$\psi(\iota(a)) \in \sum_{i=0}^{\deg(\nu)-1} \Z_p T^i b$, in contradiction to
$S(B)$ being straight. Thus $J(b) = \emptyset$ implies \rf{transseq} being
split.

Conversely, we show that if \rf{transseq} is split, then $S(B)$ is straight 
and $\sexp(B) = \exp(B)$. We have $B = \iota(A) \oplus B_{r'-r-1}$ and $S(B) 
= S(B_{r'-r-1}) \oplus \iota(S(A))$. On the other hand, $\iota(S(A)) = 
\iota(A) \cap K \neq \emptyset$; therefore $S(B_{r-1}) \cap S(K) = 
\emptyset$, and it follows that $S(B)$ is straight. This completes the proof
of the equivalence of $(i)-(iv)$.

If $\exp(A) = q$ and the above conditions hold, then $\exp(B) = \sexp(B)$ and
thus $\ell(B) = \prk(B) = r'$. We prove by induction that $r' = p d$: Assume
thus that $\prk(A) = d$ and let $s' = (q/p) b \in S(B)$, a generator. Then
$s:= N s' = (q/b) \iota(a) \in \iota(S(A))$ will be a generator of
$\iota(S(A))$. A rank comparison then yields 
\[ r' = \prk(S(B)) = \prk(S(A)) + (p-1) d = p d. \]
From $\iota(a) = p b u(\omega) + \omega^{p-1}
b$, we gather that $\ord(a) \geq \max(p \ord(b), \ord(\omega^{p-1} b))$. Since
$r' = \prk(B) = p d$ and $\ell(B) = r'$, it follows that $\ord(\omega^{p-1} b)
= \ord(b) = \ord(a) = q$. The claim follows by induction on the rank of $A$.
\end{proof}

We have seen in the previous lemma that regular flat transitions can be
iterated indefinitely: this is the situation for instance in $\Lambda$-modules
of unbounded rank: note that upon iteration, the exponent remains equal to the
exponent of the first module and this may be any power of $p$. The regular
wild transitions will be considered below, after the next lemma that
generalizes Lemma \ref{ab} to the case of increasing ranks, and gives
conditions for a large class of terminal transitions.
\begin{lemma}
\label{hasroot}
Suppose that $q':= \ord(a) > p$, $r' > r$ and $\iota(a) \in p B$. 
For $b \in B$ with $N b = a$, 
we let the module $C = C(b) := \sum_{i=0}^{r-1} \Z_p T^i b$. Then
\begin{itemize}
\item[ 1. ] 
\begin{eqnarray}
\label{prep1}
C \supset \iota(A) \quad \hbox{ and } \quad \iota(A) = p C.
\end{eqnarray}  
\item[ 2. ]
The element $b$ spans $B$ as a cyclic $\Z_p[ T ]$ -module and 
\begin{eqnarray}
\label{pn}
K = S(B). 
\end{eqnarray}
\end{itemize}
Moreover, $r' \leq (p-1) d$ and the transition $(A, B)$ is terminal.
\end{lemma}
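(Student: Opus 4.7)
The plan is to establish the conclusions in the order: terminality, that $b$ generates $B$, the equality $K = S(B)$, and finally the assertions of part 1.

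For terminality, since $\iota(A) \subset pB$, the canonical surjection $B \to \id{T} := B/\iota(A)$ induces an isomorphism $R(B) \cong R(\id{T})$, because $pB \supset \iota(A)$ implies $p\id{T} = pB/\iota(A)$. But $\id{T}$ is a cyclic $\Z_p[T]$-module annihilated by $\nu$, whose mod-$p$ reduction is $\bar\omega^{p-1} = T^{d(p-1)}$; hence $R(\id{T})$ is cyclic over $\F_p[T]$ and annihilated by $T^{d(p-1)}$, so $r' = \dim_{\F_p} R(B) \leq (p-1)d < pd$. This gives both the stated bound and terminality.

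To show $b$ generates $B$, apply Nakayama to the local ring $\Z_p[T]/b^\top$ with maximal ideal $(p,T)$: it suffices that $\bar b \neq 0$ in the one-dimensional space $B/(p,T)B$. The norm $N$ induces a map $B/(p,T)B \to A/(p,T)A$ sending $\bar b \mapsto \bar a$, which is nonzero since $a$ generates $A$. In particular $b \notin TB$; since $K = \omega B \subset TB$ (because $\omega \in T\Z_p[T]$), the relation $a \in TA$ would imply $b \in TB + K \subset TB$, a contradiction, so $a \notin TA$. Writing $T^r a = \sum_{i<r} m_i a_i$ in the $\Z_p$-basis, this forces $v_p(m_0) \geq 1$.

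Now write $\iota(a) = pgb$ for some $g \in \Z_p[T]$, possible because $\iota(a) \in pB$ and $b$ generates $B$. I claim $g$ is a unit in the local ring $\Z_p[T]/b^\top$, equivalently that $\iota(a) \notin p^2 B + pTB$. Indeed, if $\iota(a) \in p^2 B + pTB$ then applying $N$ yields $a \in pA + TA + S(A)$, whose $a_0$-component equals $\gcd(p, m_0, \ord(a_0)/p) \cdot \Z_p \, a_0$. The hypothesis $\ord(a) > p$ gives $v_p(\ord(a_0)/p) \geq 1$, while $v_p(p) = 1$ and $v_p(m_0) \geq 1$, so the gcd has valuation $1$ and the $a_0$-component is $p\Z_p a_0 \not\ni a_0$, a contradiction. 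Hence $g$ is a unit, so $pb = \iota(g^{-1}a) \in \iota(A)$, giving $pB \subset \iota(A)$ and $\iota(A) = pB$. From $\omega\nu \in b^\top$ and $\nu - pg \in b^\top$ one obtains $p\omega g \in b^\top$; inverting $g$ yields $p\omega \in b^\top$, so $\omega b \in S(B)$ and $K = \Z_p[T]\omega b \subset S(B)$. The cardinality count $|K| = |B|/|A| = |B|/|pB| = p^{r'} = |S(B)|$ then forces $K = S(B)$.

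For part 1, I would show by induction on $j$ that $T^j b \in C + S(B)$: the case $j < r$ is immediate, and for $j = r$ the basis relation $T^r a = \sum m_i a_i$ lifts via $N$ to give $T^r b - \sum m_i T^i b \in K = S(B)$; the inductive step uses $T(C + S(B)) \subset C + S(B)$. Consequently $gb \in C + S(B)$ and $T^i gb \in C + S(B)$, and multiplication by $p$ annihilates the socle component, yielding $T^i \iota(a) = p T^i gb \in pC$. Thus $\iota(A) \subset pC \subset pB = \iota(A)$, whence $\iota(A) = pC$ and in particular $\iota(A) \subset C$. The main obstacle in the entire argument is the valuation computation showing $g$ is a unit; the rest is bookkeeping on $\Z_p[T]$-module structures.
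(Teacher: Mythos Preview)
Your approach is correct and takes a genuinely different route from the paper. The paper begins by choosing a maximal $c$ with $p^e c = \iota(a)$ and works through an intricate expansion of $N(c)$ to force $e=1$, then invokes Lemma~\ref{ab}; only afterwards does it establish terminality by computing $\omega^{p-1}\kappa = 0$ on a socle generator and obtain $K = S(B)$ via a rank comparison. You reverse the logic: terminality comes first and almost for free from $R(B)\cong R(\id{T})$ once $\iota(A)\subset pB$, the key equality $\iota(A)=pB$ is obtained directly by showing that the $g$ in $\iota(a)=pgb$ is a unit, and $K=S(B)$ then follows by a clean cardinality count. This is more streamlined.

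The one step that needs tightening is the ``$a_0$-component'' argument. Axiom~5 only asserts that the $a_i = T^i a$ form a minimal generating set of the finite group $A$, not a direct-sum decomposition, so there is no canonical projection onto an $a_0$-summand and the $\gcd$ computation does not parse as written. Two clean repairs are available. First, from $N\iota(a)=pa$ and $\iota(a)=pgb$ you get immediately $p(1-g)a=0$; if $g\in(p,T)$ then $1-g$ is a unit in the Artinian local ring $\Z_p[T]/a^{\top}$ (both $p$ and $T$ are nilpotent there since $a^{\top}\supset(\omega,p^N)$), forcing $pa=0$ and contradicting $\ord(a)>p$. Second, your own reduction $a\in pA+TA+S(A)$ already suffices once phrased correctly: $S(A)$ is $\F_p[T]$-cyclic with generator $s$, and if $s\notin(p,T)A$ then $s$ would generate $A$ as a $\Z_p[T]$-module, forcing $\exp(A)=p$; hence $S(A)\subset(p,T)A$, so $a\in(p,T)A$, contradicting that $a$ generates $A$. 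Either substitution replaces the component calculation and the remainder of your argument goes through unchanged.
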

\begin{proof}
  Let $a' = \iota(a)$ and $c \in B$ be maximal and such that $p^e c =
  a'$, thus $T^i p^e c = T^i a'$. Let $C = \sum_{i=0}^{r-1} \Z_p T^i
  c$. Since $T^i a' = p^e T^i c$, we have $\iota(A) \subset C$ and
  thus $\prk(C) \geq \prk(A)$; on the other hand, the generators of
  $C$ yield a base for $C/p C$, so the reverse inequality $\prk(C)
  \leq \prk(A)$ follows; the two ranks are thus equal.

  We show that $N : C \ra A$ is surjective. We may then apply the
  lemma \ref{ab} to the couple of modules $A, C$. Let $x \in \Z_p[ T
  ]$ be such that $N(c) = x a$. If $x \in (\Z_p[ T ])^{\times}$, then
  $N(x^{-1} c) = a$ and surjectivity follows.

Assume thus that $x \in \eu{M} = (p,T)$. We have an expansion
\[ N(c) = h(T) a = (h_0 + \sum_{i=1}^{r-1} h_i T^i) a , \quad h_i \in
\Z_p, \] and we assume, after eventually modifying $h_0$ by a $p$ -
adic unit, that $h_0 = p^k$ for some $k \in \N$. If $k = 0$, then
$h(T) \in (\Z_p[ T ])^{\times}$, so we are in the preceding case, so
$k > 0$. We rewrite the previous expansion as
\begin{eqnarray}
 \label{ep1}
N(c) = (p^k + T g(T)) a, 
\end{eqnarray}
with $g(T) \in \Z_p[ T ]$ of degree $< r-1$. Let $f = e+k-1$; from
$p^e c = a$, we deduce:
\[ p^f c = p^{k-1} \cdot (p^e c) = p^{k-1} a \quad \hbox{ and } \quad
N(p^f c) = N(p^{k-1} a) = p^k a .\] By dividing the last two
relations, we obtain $(1-p^f) N(c) = T g(T) a$. Since $B$ is finite,
we may choose $M > 0$ such that $p^{M f} c = 0$. By multiplying the
last expression with $(1-p^{Mf})/(1-p^f)$ we obtain
\[ N(c) = T g(T)(1 + p^f + \ldots ) a .\]
We compare this with \rf{ep1}, finding
$T g(T)(p^f + p^{2f } + \ldots ) a = p^k a$.

Since $\iota(a) \in p B$, we have $e > 0$. It follows that 
\[ p^k \cdot \left(1 - p^{e-1} T g(T)(1+p^f+ \ldots )\right) a = 0,\]
so $p^k a = 0$ - since the expression in the brackets is a
unit. Introducing this in \rf{ep1}, yields: $ N(c) = T g(T) a$.  From
$p^e c = a$, we then deduce $N(p^e c) = p a = p^e T g(T) a$ , and this
yields $p (1 - p^{e-1} T g(T) ) a = 0$. It follows from $e > 0$ that
$p a = 0$, in contradiction with the hypothesis that $\ord(a) > p$.
We showed thus that if $\iota(a) \in p B$ and $\ord(a) > p$, the norm
$N : C \ra A$ is surjective and we may apply the lemma \ref{ab}. Thus
$p C = A = N(C)$ and $p c = a$.

The module $B$ is $\Z_p[ T ]$-cyclic, so let $b$ be a generator with
$N b = a$ and let $\tilde{C} = \sum_{i=0}^{r'-1} T^i c$. We claim that
$\tilde{C} = B$.  For this we compare $R(B)$ to $R(\tilde{C})$; we
obviously have $R(\tilde{C}) \subseteq R(B)$. If we show that this is
an equality, the claim follows from Nakayama's Lemma \ref{naka}. The
module $R(B)$ if $\F_p[ T ]$ cyclic, so there is an integer $k \geq 0$
with $\overline{c} \in T^k R(B)$. But then $N (\tilde{C}) \subset T^k
N(B)$ and since $N : C \ra A$ is surjective and $C \subset \tilde{C}$,
we must have $k = 0$, which confirms the claim and completes the proof
of point 1.

Note that $\iota(S(A)) \subset A = p C$, thus $C \cap K \supseteq
\iota(S(A))$. Conversely, if $x \in C \cap K$, then $x \in p C$, since
$T^i c \not \in \iota(A)$ for $0 \leq i < r$, from the assumption $a
\in p B$. Therefore $x \in p C \cap K = \iota(A) \cap K =
\iota(S(A))$, as shown in \rf{sockern}, and
\[ C \cap K = \iota(S(A)) . \] If $r' > r$, then $\prk(K) = r'-r$; if
$r = r'$, the transition is stable and $K \subset C$.

We now prove \rf{pn}. Let $x = g b \in K, g \in a^{\top} \subset \Z_p[ T
]$. Since $p b \in A$, we have $ p x = g p b \in g A = 0$.  Thus $K \subset
S(B)$; conversely,
\begin{eqnarray*}
 \prk(S(B)) & = & \prk(S(A)) + r'-r = \prk(S(A)) + (\prk(B)-\prk(C)) \\ & = &
\prk(S(A)) + \prk(K)
\end{eqnarray*}
and since $S(A) \subset K$ and $S(B)$ is cyclic, it follows that $S(B)
= K$, which confirms \rf{pn} and assertion 2.

Finally, note that $\prk(S(B)) = r'$ and since $\kappa = \omega b$ generates
the socle and
\[ 0 = ((\omega+1)^p-1) b = \omega^{p-1} \kappa = T^{(p-1)d} \kappa, \]
it follows that $r' \leq (p-1) d$, as claimed.
\end{proof}
We now investigate regular wild transitions and show that not more than two
such consecutive transitions are possible.
\begin{lemma}
\label{wild}
Let $(X, A)$ be a wild transition with $\prk(X) = 1$ and $(A, B)$ be a
consecutive transition. Then
\begin{itemize}
\item[ 1. ] $S(A) \not \subset K(A)$ and there is an $x' \in X$ with $\ord(x')
= p^2$ together with $g = T f(T) a \in K(A)$ such that $s = \iota(x') + g$ is a
generator of $S(A)$ and $\ell(A) = 2$. 
\item[ 2. ] The rank $r' \leq (p-1) d$ and $B$ is terminal, allowing an
  annihilator $f_B(T) = T^{r'} - q/p w(T)$.
\end{itemize}
\end{lemma}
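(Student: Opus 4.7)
My plan is to first extract the numerical invariants of the wild transition $(X,A)$ from Lemma \ref{transmod}, then exploit the $\F_p[T]$-cyclicity of $S(A)$ to produce the claimed socle generator, and finally invoke Lemma \ref{hasroot} on the successor transition $(A,B)$. The regularity $\prk(A) = p\, d_X$ together with the rank bound $\prk(A) \leq \prk(X) + (p-1) d_X = 1 + (p-1) d_X$ from Lemma \ref{transmod} forces $d_X = 1$, whence $\omega_X = T$ and $K(A) = T A$. Lemma \ref{transmod} also yields $\exp(A) = p \exp(X)$, and wildness combined with the annihilator relation $T\, \nu_X \cdot b = 0$ — where $\nu_X = T^{p-1} + p\, u(T)$ by \rf{norm} — rules out $\exp(X) = p$; thus $\exp(X) = p^m$ with $m \geq 2$.

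Next I construct the socle generator. Every $\psi(T^i b) = (\ord(T^i b)/p)\, T^i b$ satisfies $N_X(\psi(T^i b)) = (\ord(T^i b)/p)\, T^i N_X(b) = 0$, because $T$ annihilates $X$; and foldedness places $\psi(b)$ inside $T S(A)$ by Lemma \ref{lreg}. Hence the $\F_p[T]$-submodule $K(A) \cap S(A)$ of the cyclic socle $S(A) = \F_p[T]\, s_A$ is of the form $T^j S(A)$ with $j \geq 1$, proper inside $S(A)$; any generator $s \in S(A) \setminus K(A)$ then has $N_X(s)$ generating $S(X)$. Picking $x_0 \in X$ with $\ord(x_0) = p^m$ and setting $x' = c\, p^{m-2} x_0$ for a suitable unit $c \in \F_p^{\times}$, we obtain $\ord(x') = p^2$ and $p x' = N_X(s)$, so $s - \iota(x') \in K(A) = T A$ has the form $T f(T)\, a$, giving $s = \iota(x') + g$. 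The value $\ell(A) = 2$ is then read off from the orders $\ord(T^i b)$: the first jump in $\Omega(b)$ must occur at $i = 2$, consistent with the socle generator being $s$ (and not $\psi(b)$) and with the relation $T^p b = -p T u(T)\, b$ that follows from $T \nu_X \cdot b = 0$.

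For part 2, I invoke Lemma \ref{hasroot} on the transition $(A, B)$. By Lemma \ref{minp}, $d_A = \prk(A) = p$ with exact annihilator $\omega_A(T) = T^p + p T u(T)$ for a unit $u$. The hypothesis $\ord(a) = \exp(A) \geq p^3 > p$ is immediate. For the crucial hypothesis $\iota_A(a) \in p B$, I observe that $\iota_A(a) = \nu_A(T)\, b_B$ for any $\Z_p[T]$-generator $b_B$ of $B$, with $\nu_A \equiv T^{(p-1)\, d_A} \pmod{p}$; the splitting $s = \iota(x') + g$ from part 1 lifts through $\iota_A$ and constrains $R(B)$ enough to force $T^{(p-1)\, d_A}\, \bar b_B = 0$ in $R(B)$, i.e.\ $\iota_A(a) \in p B$. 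Lemma \ref{hasroot} then gives $K(B) = S(B)$, the rank bound $r' \leq (p-1)\, d_A$ (the terminality condition of Definition \ref{dcontrans}), and a generator $b_B$ of $B$ with $p\, b_B = \iota_A(a)$. Applying Lemma \ref{minp} to $b_B$, the minimal distinguished polynomial has degree $r'$ and the form $T^{r'} + p\, h(T)$; the identity \rf{norm} in the form $\nu_A = \omega_A^{p-1} + p\, u(\omega_A)$ together with $p\, b_B = \iota_A(a)$ normalizes $h$ to $-(q/p)\, w(T)$ with $w \in (\Z_p[T])^{\times}$ and $q = \exp(A)$, producing $f_B(T) = T^{r'} - (q/p)\, w(T)$. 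The main obstacle I anticipate is the rigorous verification of $\iota_A(a) \in p B$, which requires carefully tracking how the wild/folded structure of $(X, A)$ propagates through the successor lift map.
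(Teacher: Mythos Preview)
Your overall plan matches the paper's: both reduce part 2 to an application of Lemma \ref{hasroot}, for which the decisive hypothesis is $\iota_A(a)\in pB$. You correctly flag this as ``the main obstacle,'' and it is exactly where your proposal has a genuine gap.

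Your sketch for this step --- ``the splitting $s=\iota(x')+g$ from part 1 lifts through $\iota_A$ and constrains $R(B)$ enough to force $T^{(p-1)d_A}\overline{b_B}=0$ in $R(B)$'' --- is not an argument. Knowing the socle structure of $A$ gives no direct control over the roof $R(B)=B/pB$ of the successor module; and since $\iota_A(a)\equiv T^{(p-1)d_A}b_B\pmod{pB}$, the condition $T^{(p-1)d_A}\overline{b_B}=0$ is equivalent to $r'\leq(p-1)d_A$, which is precisely the conclusion you want from Lemma \ref{hasroot}. So you would need an independent route to $r'\leq(p-1)d_A$, and none is offered. The paper supplies one through a substantial computation you have not attempted: it introduces the two-step norm $\id{N}=N_{B/X}=p^{2}U(T)+pT^{p}V(T)$, uses cyclicity of $S(B)$ to relate $T^{\ell'-1}qb$ to $(q/p^{2})\id{N}(b)$ (where $\ell'=\ell(B)$, $q=\exp(A)$), and then, via a case split on $\ell'$, extracts order bounds such as $\ord(T^{p(p-1)}b)\leq q/p$ and $\ord(T^{(p-1)^{2}}b)\leq q$. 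Feeding these into the expansion of $N_{B/A}=p\,v(T)+\omega_A^{p-1}$ yields a relation of the form $(q/p)(pb\,v_3(T)+\iota(a))=0$, from which $\iota_A(a)\in pB$ is read off via the quotient $\iota(A)/(\iota(A)\cap pB)$. You also do not address the stable case $r'=r$, where Lemma \ref{hasroot} does not apply; the paper handles it by a separate remark after deriving the annihilator.

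In part 1 your outline is closer to the paper's, but still incomplete. You assert $K(A)\cap S(A)=T^{j}S(A)$ with $j\geq 1$, yet your preceding lines only place the elements $\psi(T^{i}b)$ \emph{inside} $K(A)$, which is the wrong inclusion; to get $j\geq 1$ you need an element of $S(A)$ outside $K(A)$. The paper's argument is direct: for a generator $s$ of $S(A)$ one has $\nu_X(s)=T^{p-1}s\neq 0$ (as $\prk S(A)=p$), hence $N_X(s)\neq 0$ and $s\notin K(A)$. Your justifications for $\exp(X)>p$ and for $\ell(A)=2$ are likewise only gestures; the paper obtains the former by contradiction (if $N_X(s)\notin pX$ one forces $pa\in\iota(S(X))$ and then $\prk(A)<p$), and the latter by writing $s-(q/p^{3})\iota(x)\in K(A)$ as $T\,p^{k}w(T)b$ and pinning down $p^{k}=q/p^{2}$ from the socle ranks.
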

\begin{proof}
  In this lemma we consider two consecutive transitions, so we write
  $T = \tau - 1$, acting on $A, B$ and $\omega = (T+1)^p-1$
  annihilating $A$ and acting on $B$. We shall also need the norm
  \begin{eqnarray*}
 \id{N} & = & N_{B/X} = \frac{1}{T}((T+1)^{p^2} - 1) = p^{2} U(T) + p
  T^{p} V(T), \\
 U, V & = & 1 + O(T) \in (\Z_p[ T ])^{\times}   
  \end{eqnarray*}
  We
  let $q = \exp(A), q/p = \exp(X)$ and $q p = \exp(B)$.  In the wild
  transition $(A, B)$, the socle has length $p$ and if $s \in B$ is a
  generator, then $0 \neq N s = T^{p-1} s \in S(X)$; it follows that
  $s \not \in K(A)$. Let $\iota(x) = N s \in \iota(S(X))$; if $x \not
  \in p X$, there is a $c \in \Z_p^{\times}$ with $T^{p-1} s = c N(a)
  = c p u(T) a + c T^{p-1} a$ and thus $T^{p-1} (s - c a) u^{-1}(T) =
  p a $; then $p a \in K(A) \cap \iota(S(X))$, and thus $\ord(a) = p^2$ and Lemma \ref{hasroot} implies that $\prk(A) < p$, which is a
  contradiction to our choice. Therefore $\exp(X) > p$ and
  there is an $x' \in X$ of order $p^2$ such that $p \iota(x') =
  N(\iota(x')) = N(s)$, so we conclude that $N(s-x') = 0$ and $x'-s
  \in K(A)$, which implies the first part in claim 1. We show now that
  $\ell(A) = 2$; indeed, $s-q/p^3 \iota(x) \in K(A)$, so there is a power $p^k$ such that $s-q/p^3 \iota(x) = T p^k w(T) b$ and $w \in \Z_p[ T ] \setminus p \Z_p[ T ]$. From $T s = T^2 p^k w(T) b$, and since $\prk( T s \F_p[ T ]) =
  p-1$, we conclude that $w \in (\F_p[ T ])^{\times}$. Moreover the
  above identities in the socle imply
\[ q /p^{k} \geq p^2 \ord(T p^k a) > p = \ord(T^2 p^k a) \geq 
\ord(T^{p-1} p^k a) = q/p^{k+1}. \]
Consequently, $p^k = q/p^2$ and $\ord(T a) = q$ while $\ord(T^2 a) = q/p$, thus
$\ell(A) = 2$.

For claim 2 we apply point 1. in Lemma \ref{transmod}. Let $q =
\exp(A) = p \exp(X) > p^2$ and $\ell' = \ell(B) = \prk(q B)$, $\ell =
\ell(A) = \prk(q/p A)$. From the cyclicity of the socle, we have
\begin{eqnarray}
S(B)[ T ] & = & q/p^2 x \F_p = q/p^2 \id{N}(b) \F_p, \quad \hbox{hence
  $\exists c \in \F_p^{\times}$}, \nonumber \\
\label{prev}
T^{\ell'-1} q b & = & c q/p^2 \id{N}(b) = (c q U(T) + c q/p T^p V(T)) b.
\end{eqnarray}
Assuming that $\ell' > 1$, then $q b( 1 - O(T) ) = q/p T^p V(T) b$ and
thus $q/p T^p b = q b V_1(T)$. Then $\ord(T^p b) = q$ and thus
$\ell' < p$. Moreover $q/p T^{p + \ell'} b = q T^{\ell'} p V_1(T) b =
0$, thus $\ord(T^{p+\ell'} b) \leq q/p$ and a fortiori
\begin{eqnarray}
\label{tbound}
\ord(T^{2p-1} b) \leq q/p, \quad \ord(T^{p-1} b) \leq q. 
\end{eqnarray}

We now apply the norm of the transition $(A,B)$, which may be
expressed in $\omega$ as $N_{B/A} = p u(\omega) + \omega^{p-1}$. Note
that $u(\omega) = v(T) \in (\Z_p[ T ])^{\times}$, for some $v$
depending on $u$. Also
\begin{eqnarray*}
\omega & = & T^p + p T u(T) = T (T^{p-1} + p u(T)) \\ \omega^{p-1} & = &
T^{p(p-1)} + T^{(p-1)^2} p (p-1) u(T) + O(p^2).
\end{eqnarray*}

Since $p \geq 3$, \rf{tbound} implies $\ord(T^{p(p-1)} b) \leq q/p$ and
$\ord(T^{(p-1)^2} b) \leq q$. We thus obtain:
\begin{eqnarray*}
\iota(a) & = & p v(T) b + (T^{p(p-1)} + p u_1(T) T^{(p-1)^2} + O(p^2)) b, \quad
\hbox{hence}  \\  
q/p \iota(a) & = & q b v(T), 
\end{eqnarray*}
and it follows in this case that $1 < \ell = \ell' = 2 < p$.  Consider now
the module $Q = \iota(A)/(\iota(A) \cap p B)$. Since $p \iota(A)
\subset (\iota(A) \cap p B)$, this is an $\F_p[ T ]$ module; let $T^i$
be its minimal annihilator. Then $T^i \iota(a) v_1(T) = p b, v_1 \in
(\Z_p[ T ])^{\times}$; but $q/p v^{-1}(T) \iota(a) = q b$, and thus
$q/p (T^i - v_2(T)) \iota(a) = 0, v_2 \in (\Z_p[ T ])^{\times}$. If $i
> 0$, then $v_2(T) - T^i \in (\Z_p[ T ])^{\times}$ and this would
imply $q/p \iota(a) = 0$, in contradiction with the definition $q =
\ord(a)$.  Consequently $i = 0$ and $\iota(a) \in p B$. We may thus
apply the Lemma \ref{hasroot} to the transition $(A, B)$. It implies
that the transition is terminal and $r' < (p-1) d$.

Finally we have to consider the case when $\ell' = 1$, so the relation
\rf{prev} becomes
\[ q (1/c- U(T)) b = q/p T^p V(T)) b. \] If $c \neq 1$, then $q/p T^p
b = q w(T), w \in (\F_p[ T ])^{\times}$ and the proof continues like
in the case $\ell' > 1$. If $c = 1$, then we see from the development
of $U(T) = 1 + T \binom{p^2}{2} + O(T)^2$ that there is a unit $d =
\frac{p^2-1}{2} \in \Z_p^{\times}$ such that
\[ q d T U_1(T) b = q/p T^{p} V(T) b = 0, \] since $\ell = 1$ and thus
$T q b = 0$. We may deduce in this case also that $q/p \iota(a) = q b
\cdot c_1, \ c_1 \in \Z_p^{\times}$ and complete the proof like in the
previous cases. The annihilator polynomial of $B$ is easily deduced
from Lemma \ref{hasroot}: $T^{r'} b \in \iota(S(A))$ is a generator of
the last socle, so $T^{r'} b = T q/p^2 \iota(a) + c q/p^{3} \iota(x)$
and some algebraic transformations lead to
\[ T^{r'} b w(T) = q/p b, \quad w \in (\Z_p[ T ])^{\times}, \]
which is the desired shape of the minimal polynomial. Note that $q/p =
\exp(X)$; also, the polynomial is valid in the case when $(A,B)$ is stable. We
could not directly obtain a simple annihilating polynomial for $A$, but now it
arises by restriction.
\end{proof}
The previous lemma shows that an initial wild regular transition
cannot be followed by a second one. Thus growth is possible over
longer sequences of transitions only if all modules are regular
flat. The following lemma considers the possibility of a wild
transition following flat ones.
\begin{lemma}
\label{finflat}
Suppose that $(A,B)$ is a transition in which $A$ is a regular flat module of
rank $\prk(A) \geq p$ and $\exp(B) > \exp(A)$. Then $B$ is terminal and $d <
r' < (p-1) d$. Moreover, there is a binomial $f_B(T) = \omega T^{r'-d} - q
w(T) \in b^{\top}; w \in (\F_p[ T ])^{\times}$.
\end{lemma}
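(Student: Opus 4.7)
The plan is to reduce to Lemma~\ref{hasroot} after establishing $\iota(a)\in pB$, and then use the flatness of $A$ to sharpen the rank bound and produce the binomial annihilator.

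First I would exclude the stable case. Since $\omega\in a^{\top}$ we have $r=\prk(A)\le d$, and Corollary~\ref{termt} forces stability when $r<d$. Suppose for contradiction that $(A,B)$ is stable, so $r'=r$. By Lemma~\ref{ab}, $\iota(A)=pB$ and $\ord(x)=p\cdot\ord(Nx)$ for every $x\in B$; flatness of $A$ gives $\ord(T^i a)=q$ for $0\le i<r$, hence $\ord(T^i b)=pq$ for $0\le i<r'$. Thus $\sexp(B)=\exp(B)$, so Lemma~\ref{lreg}(iv) would yield $\exp(B)=\exp(A)$, contradicting the hypothesis. Hence $(A,B)$ is not stable, $r=d$, and $r'>d$.

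Next I would prove $\iota(a)\in pB$. The identity $\iota(a)=\nu b=(\omega^{p-1}+p\,u(\omega))b$ yields $\overline{\iota(a)}=T^{(p-1)d}\bar b$ in $R(B)\cong\F_p[T]/(T^{r'})$, because $\omega^{p-1}\equiv T^{(p-1)d}\pmod p$ and $p\,u(\omega)b\in pB$. If $\iota(a)\notin pB$, then $(p-1)d<r'$ and $\iota(a)$ is $p$-maximal of order $q$. I would adapt the strategy of Lemma~\ref{wild}: consider the $\F_p[T]$-module $Q=\iota(A)/(\iota(A)\cap pB)$ with minimal annihilator $T^i$, couple it with $(q/p)\iota(a)=\iota((q/p)a)$ in the socle, and force $(q/p)\iota(a)=0$ whenever $i>0$, a contradiction with $\ord(\iota(a))=q>p$. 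I expect this to be the main obstacle, since the argument requires careful bookkeeping of orders and images in $R(B)$ and $S(B)$ under the $p$-maximality assumption.

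With $\iota(a)\in pB$, Lemma~\ref{hasroot} supplies a $\Z_p[T]$-generator $b$ of $B$ with $pb=\iota(a)$ and $Nb=a$, the equality $K=S(B)=\omega B$, terminality of the transition, and $r'\le(p-1)d$. The stronger bound follows from $\nu b=\iota(Nb)=pb$, which rewrites as $\omega^{p-1}b=-p\omega\tilde u(\omega)b$ using $1-u(\omega)=-\omega\tilde u(\omega)$; since $\omega b\in S(B)=B[p]$ we have $p\omega b=0$, so $\omega^{p-1}b=0$. Hence $\omega^{p-2}$ annihilates $\omega b$ in $B$, and reducing mod $p$ in the cyclic module $S(B)=\F_p[T]\omega b$ gives $T^{(p-2)d}\omega b=0$, whence $r'\le(p-2)d<(p-1)d$.

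Finally, for the binomial annihilator, flatness of $A$ gives $S(A)=\F_p[T]\cdot(q/p)a$ of rank $d$. Since $qb=(q/p)\cdot pb=(q/p)\iota(a)=\iota((q/p)a)$, the element $qb$ generates $\iota(S(A))$, so $qB=\F_p[T]\cdot qb=\iota(S(A))$ has rank $d$ and $\ell(B)=d$. In $S(B)\cong\F_p[T]/(T^{r'})$ with generator $\omega b$, the element $qb$ has $T$-valuation $r'-\ell(B)=r'-d$, so $qb=T^{r'-d}g_0(T)\omega b$ with $g_0\in(\F_p[T])^{\times}$. Letting $w(T)\in\F_p[T]$ be the inverse of $g_0$ modulo $T^{r'}$, the identity $\omega T^{r'-d}b=q\,w(T)b$ holds in $B$ (the ambiguity lying in $T^{r'}\omega B=0$), giving $\omega T^{r'-d}-q\,w(T)\in b^{\top}$ with $w$ a unit in $\F_p[T]$.
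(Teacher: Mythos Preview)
Your overall strategy matches the paper's: reduce to Lemma~\ref{hasroot} by showing $\iota(a)\in pB$, then read off terminality, the rank bound, and the annihilator. Your post-hasroot steps are actually cleaner than the paper's --- in particular your argument for the strict bound $r'<(p-1)d$ via $\omega^{p-1}b=-p\omega\tilde u(\omega)b=0$ is sharper (it gives $r'\le(p-2)d$) and more transparent than the paper's second ``Otherwise'' paragraph, and your derivation of the binomial is correct.

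The genuine gap is step~2, exactly where you flag it. Your sketch (``adapt the strategy of Lemma~\ref{wild} via $Q=\iota(A)/(\iota(A)\cap pB)$'') presupposes a relation of the shape $q\,b = v(T)\cdot\tfrac{q}{p}\iota(a)$ for some unit $v$, which is what makes the $Q$-argument in Lemma~\ref{wild} fire. That relation is not free here; the paper spends most of its proof establishing it. Under the contradiction hypothesis $r'=pd$ (regular wild), it first locates the socle generator via $T^{(p-1)d}s'=\tfrac{q}{p}\iota(a)\,v(T)$, then splits into the cases $q=p$ and $q>p$. For $q>p$ it writes $s'=\tfrac{q}{p^2}\iota(a)+\omega p^k v(T)b$ and carries out a careful order computation to pin down $p^k=q/p$ and $\ell(B)=2d$; only after obtaining $\tfrac{q}{p}(p-\omega^{p-1}v_2(T))b=0$ and comparing with the norm identity does the elimination yield $\tfrac{q}{p}(p\,v_3(T)b+\iota(a))=0$, from which the $Q$-style argument finally forces $\iota(a)\in pB$. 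A second, separate pass handles the range $r'\ge(p-1)d$ (where $\iota(a)\notin pB$ a priori) with its own socle bookkeeping. None of these intermediate identities follows from the bare hypothesis $\iota(a)\notin pB$ together with flatness of $A$; you would need to reproduce this socle analysis (or find a shortcut) before the $Q$-module step becomes available.

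A minor point on step~1: in the stable case Lemma~\ref{ab} gives $\ord(b)=p\,\ord(a)=pq$, so $\exp(B)=pq>q$ is automatic, not a contradiction. What you actually want from Lemma~\ref{lreg} is the ``Moreover'' conclusion $r'=pd$ for a flat transition with $r=d$, which directly contradicts $r'=r=d$. Alternatively, note that stable with $r=d$ is already terminal by definition ($r'=d<pd$), so the only thing to rule out is $r'=d$, and that follows once you have $\iota(a)\in pB$ and $r'>d$ from hasroot --- so you could even defer this.
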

\begin{proof}
  Since $\exp(B) > \exp(A)$, the transition is not flat. Assuming that
  $B$ is not terminal, then it is regular wild. Let $q = \exp(A) =
  \sexp(A)$ and $s' \in S(B)$ be a generator of the socle of $B$. By
  comparing ranks, we have $T^{(p-1)d} s' = \omega^{p-1} s' = q/p
  \iota(a) v(T), v \in (\F_p[ T ])^{\times}$. If $q = p$, then \[
  T^{(p-1)d} s' = \nu v_1(T) b \quad \Rightarrow \quad \nu s' = p
  u(\omega) s' + \omega^{p-1} s' = \nu v_1(T) b, \] and thus $s' -
  v_1(T) b \in K$. Since $K = \overline{\omega} B$, we have
  $(\overline{\omega} x + v_1(T)) b \in S$. The factor $
  \overline{\omega} x + v_1(T) \in (\Z_p[ T ])^{\times}$ and it
  follows that $b \in S(B)$, which contradicts the assumption $\exp(B)
  > \exp(A)$, thus confirming the claim in this case. If $q > p$, then
  the previous identity yields $s' - q/p^2 \iota(a) \in K$ and thus
  $s' = q/p^2 \iota(a) + \omega x b$; we let $x=p^k v(T)$ with $v(0)
  \not \equiv 0 \bmod p$.

We assumed that $\prk(S(B)) = p d$, so it thus follows that $v(T) \in (\Z_p[ T
])^{\times}$. Recall that $\ord(b) = q p$ as a consequence of $\exp(B) >
\exp(A)$ and \rf{exps}; the norm shows that
\[ q u(\omega) b + (q/p) \omega^{p-1} b = (q/p) \iota(a) \neq 0.\]
 If $(q/p) \omega^{p-1} b = 0$, then $q u(\omega) = (q/p) \iota(a)$ which implies
 that the annihilator of $Q = \iota(A) /(p B \cap \iota(A))$ is trivial and
 $\iota(A) \subset p B$. We are in the premises of Lemma \ref{hasroot}, which
 implies that $B$ is terminal. 

It remains that $\ord(\omega^{p-1} b) = q$. We introduce this in the
expression for the generator of the socle:
\[ s' = \omega p^k v(T) b - q/p u(\omega) b - q/p^2 \omega^{p-1} b \in
S(B). \] We have
\[ q/p^{k-1} = \ord(p^k b) \geq p^2 = \ord(p^k \omega b) \geq 
\ord( p^k \omega^{p-1} b) = q/p^k, \]
and thus $q/p^2 \leq p^k \leq q/p$. Note that 
\[ \omega s' = \omega^2 p^k v(T) b + \omega q/p^2 \iota(a) = \omega^2
p^k v(T) b \in S(B); \] from $\omega^{p-1} (q/p) b \in S(B)$, it
follows that $p^k = q/p$ and $\ord(\omega b) = q p$ while
$\ord(\omega^2 b) = q = \ord(\omega^{p-1} b)$. Let $i > 0$ be the
least integer with $q/p \omega T^i b \in S(B)$. From the definition of
$s'$ we see that $T^k$ is also the annihilator of $(q/p^2) \iota(a)$ in
$\iota(A)/(S(\iota(A)))$, so $i = d$, since $A$ is flat. It follows
that $\ell(B) = \prk(B/q B) = 2 d$ and the cyclicity of the socle
implies that 
\[ T^{(p-2) d} s' =  q b v_1(T) = T^{(p-2) d} s' = 
\frac{q}{p} \omega^{p-1} v(T) b + \frac{q}{p^2} \omega^{p-2} \iota(a). \] 
Hence there is a unit $v_2(T)
\in (\Z_p[ T ])^{\times}$ such that $\frac{q}{p} ( p - \omega^{p-1}
v_2(T)) b = 0$. By comparing this with the norm identity $\frac{q}{p}
( p + \omega^{p-1} u^{-1}(T)) b = \frac{q}{p} \iota(a)$ we obtain,
after elimination of $\omega^{p-1} b$, that $\frac{q}{p}(p b v_3(T) +
\iota(a)) = 0$ and the reasonment used in the previous case implies
that $\iota(a) \in p B$ so Lemma \ref{hasroot} implies that $B$ is
terminal.

We now show that $\iota(A) \subset p B$. Otherwise, $r' \geq (p-1) d$
and $T^{r'-1} s' = c T^{d-1} q/p \iota(a)$, so by cyclicity of the
socle, $T^{r'-d} s' = v(T) q/p \iota(a)$ while $\nu s' = T^{p d - r' -
  1} \iota(a) \neq 0$. A similar estimation like before yields also in
this case $s' = q/p \omega v(T) b + q/p^2 T^j \iota(a), j = p d - r'
-1$. Then $\omega^{p-2} s' = q/p \omega^{p-1} v(T) b \in S(B)$. Let $i
> 0$ be the smallest integer with $p b \in \iota(a)$. Then $q b = T^i
q/p \iota(a) v_1(T)$ and we find a unit $v_2(T)$ such that
\[ q/p (\omega^{p-1} T^{i+r'-(p-1)d-1} - p v_2(T)) b = 0. \] This
implies by a similar argument as above, that $\iota(a) \in p
B$. Therefore we must have $r' < (p-1)d$ and $S(B) = K(B) \supset q/p
\iota(A)$. Since $q b = q/p \iota(a) = \omega b T^{r'-d} v(T)$, we
obtain an annihilator polynomial $f_B(T) = T^{r'-d} - q v^{-1}(T)$,
which completes the proof of the lemma.
\end{proof}
We finally apply the Lemma \ref{lreg} to a sequence of flat
transitions. This is the only case which allows arbitrarily large growth of
the rank, while the value of the exponent is fixed to $q$.
\begin{lemma}
\label{lregind}
Suppose that $A_1, A_2, \ldots, A_n$ are a sequence of cyclic $\Z_p[ T ]$
modules such that $(A_i, A_{i+1})$ are conic non-stable transitions with
respect to some $\omega_i \in \Z_p[ T ]$ and $\prk(A_1) = 1$. If $n > 3$, then
$A_i$ are regular flat for $1 \leq i < n$.
\end{lemma}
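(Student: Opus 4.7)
The plan is to prove by induction on $i$ that, for each $1 \leq i \leq n-2$, the transition $(A_i, A_{i+1})$ is regular flat; together with the trivial flatness of the rank-one module $A_1$, this implies that $A_1, A_2, \ldots, A_{n-1}$ are all regular flat, as required. The trichotomy ``regular flat, regular wild, or terminal'' for non-stable transitions, combined with the remark following Corollary \ref{termt} (a terminal transition is followed by a stable one), gives the skeleton of the argument: at each step I must exclude ``terminal'' and ``regular wild''.

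For the base case $i=1$: since $\prk(A_1) = 1$, Corollary \ref{termt} combined with the non-stability of $(A_1, A_2)$ forces $\deg(\omega_1) = 1$, so the transition is regular flat, regular wild, or terminal. A terminal transition would force $(A_2, A_3)$ to be stable by the remark following Corollary \ref{termt}, contradicting the non-stability hypothesis (available because $n \geq 4$). A regular wild $(A_1, A_2)$ would invoke Lemma \ref{wild} with $(X, A) = (A_1, A_2)$, making $A_3$ terminal, hence $(A_3, A_4)$ stable -- again a contradiction. Thus $(A_1, A_2)$ is regular flat.

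For the inductive step with $2 \leq i \leq n-2$: by iterating Lemma \ref{lreg}, the module $A_i$ is flat with constant exponent $q = \exp(A_1)$ and $\prk(A_i) = p \deg(\omega_{i-1}) \geq p$. The non-stable transition $(A_i, A_{i+1})$ is regular flat, regular wild, or terminal. A terminal transition implies $(A_{i+1}, A_{i+2})$ stable by the same remark, a contradiction since $i + 2 \leq n$. A regular wild transition forces $\exp(A_{i+1}) > \exp(A_i)$ (see the obstacle below); then Lemma \ref{finflat} applies because $\prk(A_i) \geq p$ and shows $A_{i+1}$ is terminal, so $(A_{i+1}, A_{i+2})$ would be stable -- contradiction. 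Hence $(A_i, A_{i+1})$ is regular flat, closing the induction.

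The main obstacle is the last subcase of the induction: ruling out regular wild transitions following a flat module of rank $\geq p$. Lemma \ref{wild} cannot be invoked directly because its premise requires $\prk(X) = 1$, which fails for $i \geq 2$. The needed fact $\exp(A_{i+1}) > \exp(A_i)$ should be extracted by comparing the cyclic $\F_p[T]$-socle $S(A_{i+1})$ (Lemma \ref{cycsoc}) with the image $\iota(S(A_i))$, which is straight because $A_i$ is flat, and with the element $(q/p) b$ where $b$ generates $A_{i+1}$; the identity $\nu b = \iota(a)$ and the conic axiom that $\omega$-torsion lies in $\iota(A_i)$ should produce, via \rf{presoc} of Lemma \ref{byun}, a $T$-torsion element outside $\iota(A_i)$ whenever the socle of $A_{i+1}$ is folded at constant exponent, giving the desired contradiction. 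Once the exponent increase is established, Lemma \ref{finflat} closes the argument.
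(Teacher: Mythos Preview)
Your argument follows the same skeleton as the paper's proof: rule out terminal (next transition would be stable) and rule out wild (via Lemma~\ref{wild} at the base, via Lemma~\ref{finflat} in the inductive step), concluding by induction. The paper is terser and simply asserts that Lemma~\ref{finflat} forces the post-flat transition to be regular flat or terminal.

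You are right to flag the obstacle: Lemma~\ref{finflat} has the hypothesis $\exp(B)>\exp(A)$, and the paper does not spell out why ``regular wild after flat'' forces this. Your instinct to resolve it via the socle is correct, but the specific route through \rf{presoc} does not quite land --- that lemma produces a $T$-torsion element $a'$ that \emph{is} in $\iota(A)$, so no contradiction results. The cleaner argument is a direct rank count: if $A$ is flat of rank $d$ with $\exp(A)=q$ and $\exp(B)=q$, then $(q/p)b\in S(B)$ and one computes $\iota((q/p)a)=(q/p)\nu b=(q/p)\omega^{p-1}b=T^{(p-1)d}\cdot(q/p)b$ in $S(B)$. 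Since $\iota(S(A))$ has $\F_p[T]$-rank $d$ inside the cyclic $S(B)$ of rank $pd$, the element $(q/p)b$ must itself generate $S(B)$, i.e.\ $S(B)$ is straight and $B$ is flat by Lemma~\ref{lreg}. Hence if $(A,B)$ is regular and not flat, necessarily $\exp(B)>\exp(A)$, and Lemma~\ref{finflat} applies.
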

\begin{proof}
If $n = 2$ there is only one, initial transition: this case will be considered
in detail below. Assuming that $n > 2$, the transitions $(A_i, A_{i+1})$ are
not stable; if $(A_1, A_2)$ is wild, then Lemma \ref{wild} implies $n \leq
3$. The regular transitions being by definition the only ones which are not
terminal, it follows that $A_k, k = 1, 2, \ldots, n-1$ are flat. Lemma
\ref{finflat} shows in fact that $A_2$, which must be flat, can only be
followed by either a regular flat or a terminal transition. The claim follows
by induction.
\end{proof}

\subsection{Case distinctions for the rank growth}
We have gathered above a series of important building blocks for analyzing
transitions. First we have shown in point 2. of Lemma \ref{transmod} that all
transitions that are not flat are terminal. Thus for the cases of interest
that allow successive growths of ranks, we must have $r = d, r' = p d$. The
Lemma \ref{lreg} shows that these reduce to $\exp(A) = q = \sexp(A)$.

We start with an auxiliary result which will be applied in both remaining
cases:
\begin{lemma}
\label{sk}
Let $(A, B)$ be a transition with $\exp(A) = p$ and $r = d < r' < p d$. Then
$S(B) \supseteq K$ with equality for $r' \leq (p-1) d$. If $S(B) \neq K$, then
$s = T^{pd - r'}$ is a generator of $S(B)$.
\end{lemma}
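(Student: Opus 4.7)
The plan is to treat the three assertions of the lemma in sequence: first the inclusion $K \subseteq S(B)$; then the equality case $r' \le (p-1)d$; finally the description of a socle generator when $K \subsetneq S(B)$.

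Two easy reductions come first. Since $\exp(A) = p$, for every $x \in B$ we have $N(px) = p\,N(x) \in pA = 0$, so $pB \subseteq K$. Moreover $\iota(A) \subseteq S(B)$ since $\iota$ is $\Z_p$-linear and $\exp(A)=p$. By Lemma~\ref{cycsoc}(ii), $K = a^{\top}B$; the hypotheses $\exp(A)=p$ and $r=d$ give $A \cong \F_p[T]/(T^d)$ and $a^{\top} = (p, T^d)$, so $K = pB + T^d B$. By Lemma~\ref{transmod}, $\exp(B) \le p\exp(A) = p^2$, whence $p^2 B = 0$ and $pK = pT^d B$. Therefore proving $K \subseteq S(B)$ reduces to showing $pT^d b = 0$, equivalently $p\omega\, b = 0$ (since $\omega = T^d + pg$ and $p^2 b = 0$).

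For Step~(i) I would exploit the relation $\omega\nu \in b^{\top}$, which rewrites as $\omega^p b = -p\omega\, u(\omega) b$. Modulo $p$, $\omega^p \equiv T^{pd}$, and since $pd > r'$, the image of $\omega^p b$ in $B/pB \cong \F_p[T]/(T^{r'})$ vanishes, so $\omega^p b \in pB$. Combining this with $pB \subseteq K = \omega B$, one obtains $\omega^p b \in p\omega B$, which forces $p\omega b$ to lie in the intersection of $S(B)$ with $\omega^2 B$, and a second application of the same cycle (using point~4 of Definition~\ref{dcontrans}: $\omega x = 0 \Rightarrow x \in \iota(A)$) traps $p\omega b$ in an element whose $\omega$-image is zero, concluding $p\omega b = 0$.

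For Step~(ii), when $r' \le (p-1)d$ we have $\omega^{p-1} b \in pB$ (since $\omega^{p-1} \equiv T^{(p-1)d}$ modulo $p$ and $(p-1)d \ge r'$), so $\iota(a) = \nu b \in pB$, forcing $\exp(B) = p^2$. Writing $|B| = p^{r'+k}$ with $k$ the number of $\Z/p^2$-summands, Step~(i) gives $|K| = p^{r'+k-d} \le |S(B)| = p^{r'}$, hence $k \le d$. Using the $\F_p[T]$-cyclic structure of $S(B)$ (Lemma~\ref{cycsoc}(i), applicable since $S(A)=A$ is cyclic) with generator $s'$ so that $\iota(A) = T^{r'-d}\F_p[T]s'$, a rank count using the embedding $\iota(A) \subseteq pB$ and the cyclicity of $pB$ as $\F_p[T]$-module forces $k = d$, hence $K = S(B)$. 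For Step~(iii), if $r' > (p-1)d$, then $\omega^{p-1}b \notin pB$, so $\iota(a)$ has nonzero image in $R(B)$; comparing $\iota(a) = T^{r'-d}s'$ in the $\F_p[T]$-cyclic socle with the relation $\omega^{p-1}b = T^{(p-1)d}b$ modulo $pB$, the shortest power $T^{pd-r'}$ is the one producing (up to unit) the generator of $S(B)$ from $b$.

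The main obstacle is Step~(i): the relation $p\omega \in b^{\top}$ is not a formal consequence of the ``generic'' relations $p^2,\, p\nu,\, \omega\nu \in b^{\top}$; its derivation requires both the rank constraint $\prk(B) = r' < pd$ (producing the distinguished minimal polynomial $f_B = T^{r'} + ph$ of degree $r'$ in $b^{\top}$) and the conic structural axiom $\ker\omega = \iota(A)$. Once this is in hand, Steps~(ii) and~(iii) are rank comparisons leveraging the cyclicity of the socle.
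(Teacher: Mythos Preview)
Your overall strategy differs from the paper's, and the difference matters. The paper does not attempt to prove $K\subseteq S(B)$ as a preliminary step. Instead it works directly with a generator $s$ of the $\F_p[T]$-cyclic socle $S(B)$ and computes $N(s)$: since $ps=0$ one has $N(s)=(pu(\omega)+\omega^{p-1})s=\omega^{p-1}s\equiv T^{(p-1)d}s$. If $r'\le(p-1)d$ this vanishes (because $\dim_{\F_p}S(B)=r'$), so $s\in K$ and hence $S(B)\subseteq K$; the paper records this as $S(B)=S(K)$. If $r'>(p-1)d$ then $N(s)\neq0$ lies in $\iota(S(A))$, and tracing the image of $s$ in $R(B)$ via Lemma~\ref{byun} forces $s=T^{pd-r'}v(T)b$ for a unit $v$, with $k=pd-r'$ falling out of a rank comparison. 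Everything is driven by this single computation of $N(s)$.

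Your Step~(i) is where the genuine gap lies. You correctly obtain $\omega^p b=-pu(\omega)\,\omega b$ and $\omega^p b\in pB$, but the passage ``a second application of the same cycle\ldots traps $p\omega b$ in an element whose $\omega$-image is zero, concluding $p\omega b=0$'' is not an argument. Concretely: from $p\omega b\in S(B)\cap\omega^2 B$ one cannot conclude $p\omega b=0$ without further input, and axiom~4 of Definition~\ref{dcontrans} ($\omega x=0\Rightarrow x\in\iota(A)$) does not by itself close the loop you describe. Your own final paragraph acknowledges this as the ``main obstacle'', and the sketch does not overcome it. The paper's route sidesteps this entirely: it never needs $p\omega b=0$ as an independent fact, because computing $N$ on the socle generator handles the equality and the generator description simultaneously.

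Your Steps~(ii) and~(iii) are reasonable in outline but rest on Step~(i) and involve more bookkeeping (the parameter $k$, the embedding $\iota(A)\subseteq pB$, etc.) than the paper's direct argument. I would recommend dropping the attempt to establish $K\subseteq S(B)$ first and instead computing $N$ on the socle generator; the identity $N(s)=\omega^{p-1}s$ carries all three assertions of the lemma with almost no case analysis.
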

\begin{proof}
Since $r = d = \deg(\omega)$, it follows that $N(T^i b)) = T^i \iota(a) \neq 
0$ for $i = 0, 1, \ldots, r-1$ while $N(T^d b) = T^d \iota(a) = 0$, since 
$\exp(A) = p$. Therefore $K = \omega B$ and $R(K) = T^d R(B)$. 

We first show that $S(B) \subset K$: let $s \in S(B)$ be a generator.
Cyclicity of the socle implies that $T^{r'-1} s = c_0 T^{d-1} \iota(a) \neq 0,
c_0 \in \F_p^{\times}$ and $T^{r'} s = 0$. We have $N(s) = (p u(\omega) +
\omega^{p-1}) s = \omega^{p-1} s = T^{(p-1) d} s$. If $r' \leq (p-1) d$, then
$T^{(p-1) d} s = 0$ and thus $s \in K$ and $S(B) = S(K)$. Otherwise, $T^{(p-1)
d} \overline{b} = \overline{\iota(a)} \in R(B)$ and a fortiori $N(s) \in
\iota(S(A)) = \F_p[ T ] \iota(a)$. Let $0 \leq k < d$ be such that $N(s) = T^k
\iota(a)$: we may discard an implicit unit by accordingly modifying $s$. Then
$T^{d-(k+1)} N(s) \neq 0$ and $T^{d-k} N(s) = 0$.  Therefore $T^{d-(k+1)} s
\not \in K = \omega B$ and $s \not \in p B$. It follows that $\overline{s}
R(B) = T^k R(B)$ and $s = T^k v(T) b$, by lemma \ref{byun}. By comparing
ranks, we see that $k = p d - r'$ and $s = T^{pd-r'}$ is a generator of
$S(B)$. 
\end{proof}

For initial transitions we have:
\begin{lemma}
\label{initrans}
Let $(A,B)$ be a conic transition and suppose that $r = 1$ and the transition
is terminal. If $r' < p$, then $B$ has a monic annihilator polynomial $f_B(T)
= T^{r'}-q w(T)$ with $q = \ord(a)$ and $w \in (\Z_p[ T ])^{\times}$.
\end{lemma}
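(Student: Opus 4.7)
The strategy is to exploit the conic axiom $\omega\equiv 0\bmod T$, which in the initial setup $r=d=1$ forces $T$ to act nilpotently on $B$ from the very first step, and then to read the annihilator polynomial of $b$ off the socle $S(B)$.

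First I unpack the structure coming from $r=d=1$. The hypothesis $\omega\equiv T\bmod p$ together with $\omega(0)=0$ forces $\omega=T\,u(T)$ for some unit $u(T)\in(\Z_p[T])^{\times}$, so $\omega a=0$ gives $Ta=0$; hence $A=\Z_p a$ carries the trivial $T$-action and $\ord(a)=q$. Applying the Nakayama bound $T^{r'}b\in pB$ (valid since $r'<p$) to the relation $\iota(a)=\nu\,b=(p\,u(\omega)+T^{p-1}u^{p-1})b$ I get $\iota(a)\in pB$; combining this with Lemma~\ref{hasroot} (for $q>p$) or Lemma~\ref{sk} (for $q=p$), and replacing the generator of $B$ by a suitable $\Z_p$-unit multiple, I may assume $\iota(a)=pb$. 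In particular $\ord(b)=pq=\exp(B)$.

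Second, I confine the $T$-orbit of $b$ to the socle. From the $\Z_p[T]$-linearity of $\iota$ and $Ta=0$ we obtain $pTb=T\iota(a)=0$, so $Tb\in S(B)=B[p]$; by $T$-stability $T^kb\in S(B)$ for every $k\ge 1$, and moreover $T$ acts as $0$ on $pB$. Lemmas~\ref{hasroot} and~\ref{sk} further yield $K=S(B)$, and combined with $K=\omega B=TB$ (coming from $\omega=Tu$) this gives $S(B)=TB$, an $\F_p[T]$-cyclic module with generator $Tb$. Since $T$ acts nilpotently on $TB$ (from $T\cdot pB=0$ and $T^pb\in pB$ one sees $T^{p+1}b=0$, so the $\F_p[T]$-annihilator of $Tb$ is a power of $T$) and $\dim_{\F_p}S(B)=r'$, the minimal $\F_p[T]$-annihilator of $Tb$ is exactly $T^{r'}$; therefore $T^{r'+1}b=0$ and, crucially, $T^{r'}b\neq 0$.

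Finally I read off $f_B$. Since $T^{r'}b$ is a nonzero socle element its order is $p$; on the other hand $T^{r'}b\in pB=\Z_p\cdot pb$ (equality since $T(pb)=0$ collapses $\Z_p[T]\cdot pb$ to $\Z_p\cdot pb$), so $T^{r'}b=pm\,b$ for some $m\in\Z_p$ well defined modulo $q$. The order equation $\ord(pm\,b)=pq/\gcd(pm,pq)=p$ forces $v_p(m)=v_p(q)-1$, hence $m=(q/p)w$ for a unit $w\in\Z_p^{\times}$. Therefore $T^{r'}b=qw\,b$, i.e.\ $f_B(T):=T^{r'}-qw\in b^{\top}$; by Lemma~\ref{minp} and $\deg f_B=r'=\prk(B)$, this is a minimal monic annihilator polynomial of the claimed shape.

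The main obstacle is the sharp non-vanishing $T^{r'}b\neq 0$: if the socle properly contained $TB$, the cyclic generator $Tb$ would not need to reach $T^{r'}b$ and one could have $T^{r'}b=0$, destroying the binomial form. The conic axiom $\omega\equiv 0\bmod T$ enforces $\omega B=TB$, while Lemma~\ref{hasroot} or Lemma~\ref{sk} enforces $K=S(B)$; the two together furnish the rigidity needed to locate $T^{r'}b$ at the very bottom of the socle filtration.
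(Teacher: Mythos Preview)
Your proof is correct and follows the same overall strategy as the paper: establish $\iota(a)\in pB$, invoke Lemma~\ref{hasroot} or Lemma~\ref{sk} to get $K=S(B)$, identify $S(B)=TB$ as $\F_p[T]$-cyclic of dimension $r'$, and read the binomial annihilator off the socle. Your version is in fact more uniform than the paper's, which splits into the cases $r'<p-1$ and $r'=p-1$ and handles the latter by a separate manipulation of $T^{p-1}b=\iota(a)-pu(T)b$; your observation that $T^{r'}b\in pB$ already forces $\iota(a)\in pB$ for every $r'\le p-1$ cleanly absorbs both cases, and your reduction to $pB=\Z_p\cdot pb$ yields the sharper conclusion $w\in\Z_p^{\times}$ (a constant), whereas the paper only claims $w(T)\in(\Z_p[T])^{\times}$.

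One small logical reordering is needed in the $q=p$ branch. There Lemma~\ref{hasroot} does not apply, and Lemma~\ref{sk} by itself only gives $S(B)=K$, not $\iota(a)=pb$. The clean order is: from Lemma~\ref{sk} obtain $S(B)=K=\omega B=TB$, hence $Tb\in S(B)$ and $pTb=0$ \emph{before} knowing $\iota(a)=pb$; this already gives $pB=\Z_p\cdot pb$. Since $\exp(B)\le p^2$ and $\iota(a)\neq 0$ forces $\exp(B)=p^2$, the element $pb$ is a nonzero generator of the one-dimensional space $S(B)[T]$, and $\iota(a)\in pB\cap B[T]$ lands there too, giving $\iota(a)=u\cdot pb$ for a unit $u\in\Z_p^{\times}$. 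With that adjustment your argument runs through unchanged.
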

\begin{proof}
We let $q = \ord(a)$ throughout this proof. Assume first that $r' < p-1$, so
$T^{p-1} b = T^{p-1-r'} ( T^{r'} b ) = 0$, since $T^{r'} a \in \iota(A)$ by
definition of the rank. Then $\iota(a) = (p u(T) + T^{p-1}) b = p u(T) b$ and
$p b = u(T)^{-1} \iota(a) = \iota(a)$, since $u(T) \equiv 1 \bmod T$. We have
thus shown that $\iota(a) \in p B$ and, for $q > p$, we may apply Lemma
\ref{hasroot}. It implies that $S(B) = K = T B$ and $T^{r'-1} (T b) = c q/p
\iota(a) = c q b, c \in \Z_p^{\times}$. This yields the desired result for
this case. If $q = p$, the previous computation shows that $\iota(a) \in p B$,
but Lemma \ref{hasroot} does not apply here. We can apply Lemma \ref{sk}, and
since, in the notation of the lemma, $d = 1$ and the transition is assumed not
to be stable, we are in the case $1 < r' \leq p-d$ and thus $S(B) = K$
too. The existence of the minimal polynomial $f_B(T) = T^{r'} - p c \in
b^{\top}$ follows from this point like in the case previously discussed.

If $r' = p-1$, then $T^{p-1} b = \iota(a) - p u(T) b $ and thus $T^p b = p T
u(T) b = 0$ and $T b \in S(B)$. Since the socle is cyclic and $K = T B$ it
follows that $K = S(B)$. In particular, there is a $c \in \Z_p^{\times}$ such
that
\begin{eqnarray*}
 T^{p-1} b & = & \frac{c q}{p} \iota(a) = \frac{c q}{p} (p u(T) + T^{p-1}) b \\ 
 & = & c q u(T) b + \frac{c q}{p} u(T) T^{p-1} b, \quad \hbox{hence}
\\ 
T^{p-1}(1- \frac{c q}{p} u(T) ) b & = & c q u(T) b. 
\end{eqnarray*}
If $q > p$, then $1-\frac{c q}{p} u(T) \in (\Z_p[ T ])^{\times}$; if $q = p$,
it must also be a unit: otherwise $1-c u(T) \equiv 0 \bmod T$ and thus $T^p b
= c q u(T) b = 0$, in contradiction with the fact that $q b = q/p \iota(a)
\neq 0$. In both cases we thus obtain an annihilator polynomial of the shape
claimed.

Finally, in the case $r' = p$ and the transition is wild. We refer to Lemma
\ref{wild} in which treats this case in detail.
\end{proof}

\begin{remark}
\label{rthaine}
Conic $\Lambda$-modules are particularly simple modules. The following example
is constructed using Thaine's method used in the proof of his celebrated
theorem \cite{Th}. Let $\F_1 \subset \Q[ \zeta_{73} ]$ be the subfield of
degree $3$ over $\Q$ and $\K_1 = \F_1 \cdot \Q[ \sqrt{-23} ]$. Then $A_1
:=(\id{C}(\K_1))_3 = C_9$ is a cyclic group with $9$ elements. If $\K_2$ is
the next level in the cyclotomic $\Z_3$-extension of $\K_1$, then
\[ A_2:= (\id{C}(K_2))_3 = C_{27} \times C_9 \times C_9 \times C_3 \times C_3
\times \times C_3 . \] The prime $p = 3$ is totally split in $\K_1$ and the
classes of its factors have orders coprime to $p$. Although $A_1$ is $\Z_p$-
cyclic, already $A_2$ has $p$-rank $2 p$. Thus $\rg{A}$ cannot be conic, and
it is not even a cyclic $\Lambda$-module. 

It is worth investigating, whether the result of this paper can extent to the
case when socles are not cyclic and conicity is not satisfied, in one or more
of its conditions. Can these tools serve to the understanding of
$\Lambda$-modules as the one above?
\end{remark}
%%%%%PASTE HERE

\section{Transitions and the critical section}
We return here to the context of $\Lambda$ modules and conic elements,
and use the notation defined in the introduction, so $\id{A}_n =
\Lambda a_n$ are the intermediate levels of the conic $\Lambda$-module
$\Lambda a \subset \rg{A}^-$. We apply the results of the previous
chapter to the transitions $C_n = (\id{A}_n, \id{A}_{n+1})$ for $n <
n_0$. By a slight abuse of notation, we keep the additive notation for
the ideal class groups that occur in these concrete transitions. The
first result proves the consistency of the models:
\begin{lemma}
\label{cons}
Let the notations be like in the introduction and $a =(a_n)_{n \in \N}
\in \rg{A}^-$ a conic element, $\id{A} = \Lambda a$ and $\id{A}_n =
\Lambda a_n \subset A_n^-$. Then the transitions $(\id{A}_n,
\id{A}_{n+1})$ are conic in the sense of Definition \ref{dcontrans},
for all $n > 0$.
\end{lemma}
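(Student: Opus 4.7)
The plan is to verify the five axioms of Definition \ref{dcontrans} one at a time for $(A,B) = (\id{A}_n, \id{A}_{n+1}) = (\Lambda a_n, \Lambda a_{n+1})$, reading off the required data from the ambient $\Lambda$-module $\id{A} = \Lambda a$. Writing $\id{A}_m = \Lambda a / \omega_m \Lambda a$, I would take the norm $N : B \to A$ to be reduction modulo $\omega_n$ and the lift $\iota : A \to B$ to be multiplication by $\nu_{n+1,n}$, which is well defined modulo $\omega_{n+1}$ because $\omega_n \nu_{n+1,n} = \omega_{n+1}$. Surjectivity of $N$ is immediate, while for injectivity of $\iota$ the hypothesis $\nu_{n+1,n} \tilde{x} \in \omega_{n+1} \Lambda a$ in $\Lambda/f_a(T)$ reduces to requiring $\nu_{n+1,n}$ to be a non-zero-divisor modulo $f_a$; this follows from property~4 of Definition \ref{dconic} since $\gcd(\nu_{n+1,n}, f_a) \mid \gcd(\omega_{n+1}, f_a) = 1$. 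The identity $N \circ \iota = p$ on $A$ is then obtained by reducing $\nu_{n+1,n} = p\, u(\omega_n) + \omega_n^{p-1}$ modulo $\omega_n$, using $u(0) = 1$.

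Axioms 2 and 3 are essentially bookkeeping. The cyclic $p$-group $G = \Gamma/\langle \tau^{p^n}\rangle$ acts on both $A$ and $B$, since $\omega_{n+1}$ annihilates $B$ and $\omega_n$ annihilates $A$, and both modules are $\Z_p[\tau]$-cyclic by the very construction of $\id{A}_m$. The polynomial $\omega = \omega_n = (T+1)^{p^{n-1}} - 1$ satisfies $\omega \equiv T^{p^{n-1}} \bmod p$ (Frobenius) and $\omega \equiv 0 \bmod T$, and it annihilates $A$. The relation demanded by the axiom is just the identity $\omega_{n+1}/\omega_n = \nu_{n+1,n}$ recorded in \rf{norm}.

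The heart of the proof is axiom 4: the identity $K = \Ker(N) = \omega B$, together with the implication $\omega x = 0 \Rightarrow x \in \iota(A)$. The first equality is immediate from $N$ being reduction modulo $\omega_n$, whose kernel is $\omega_n \Lambda a / \omega_{n+1} \Lambda a = \omega_n B$. For the second, I would lift $x \in B$ to $\tilde{x} \in \Lambda$ and express the hypothesis as $\omega_n \tilde{x} = \omega_{n+1} y + f_a z$ in $\Lambda$; rearranging gives $\omega_n(\tilde{x} - \nu_{n+1,n} y) = f_a z$, so $\omega_n \mid f_a z$ in $\Lambda$. Since $\gcd(\omega_n, f_a) = 1$ by property~4 of conic elements and $\Lambda$ is a UFD, this forces $z \in \omega_n \Lambda$, whence $\tilde{x} \equiv \nu_{n+1,n} y \bmod f_a$ and $x = \iota(y \bmod \omega_n) \in \iota(A)$. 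This is the only step where conicity is used in an essential way, and I expect it to be the delicate point of the proof.

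Finally, axiom 5 follows from Lemma \ref{minp} and Remark \ref{anpol}: a minimal annihilator of $a_n$ in $\Lambda$ has Weierstrass shape $T^r + p h(T)$ with $\deg h < r = \prk(A)$, so the images of $a_n, T a_n, \ldots, T^{r-1} a_n$ in $A/pA$ are $\F_p$-linearly independent (any relation would contradict the minimality of $r$), and Nakayama's Lemma \ref{naka} lifts this to a minimal $\Z_p$-generating set of $A$. Assembling these five verifications completes the proof of consistency for every $n > 0$.
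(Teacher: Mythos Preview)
Your argument is correct, but it proceeds along a somewhat different route from the paper. The paper takes $N$ and $\iota$ to be the \emph{arithmetic} norm $\Norm_{\K_{n+1}/\K_n}$ and the ideal lift $\iota_{n,n+1}$, deduces injectivity of $\iota$ from the vanishing of minus-part capitulation, and then verifies axiom~4 by lifting $x \in K$ to a norm-coherent sequence $y \in \id{A}$ (using properties 1--3 of Definition~\ref{dconic}) and invoking Iwasawa's Theorem~\ref{iw6} to see that $y_n = 1$ forces $y \in \omega_n \id{A}$. You instead pass immediately to the purely algebraic model $\id{A}_m \cong \Lambda/(f_a,\omega_m)$, take $N$ as reduction and $\iota$ as multiplication by $\nu_{n+1,n}$, and then everything --- injectivity of $\iota$, the identity $K=\omega B$, and the implication $\omega x=0 \Rightarrow x\in\iota(A)$ --- drops out of the single coprimality hypothesis $(f_a,\omega_n)=1$ (property~4) via elementary UFD arguments in $\Lambda$. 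This is cleaner and avoids the appeal to Theorem~\ref{iw6}; the paper's route has the virtue of making transparent which conic axioms are doing the work at each step and of staying closer to the arithmetic objects.

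One small point you pass over: the identification $\id{A}_m = \Lambda a/\omega_m \Lambda a$ is not a definition but a claim. The paper defines $\id{A}_m = \Lambda a_m \subset A_m^-$, and to match this with $\Lambda a/\omega_m \Lambda a$ you need to know that the kernel of the projection $\Lambda a \to A_m^-$, $c \mapsto c_m$, is exactly $\omega_m \Lambda a$; this is precisely property~2 of Definition~\ref{dconic} (the containment $\omega_m \Lambda a \subset \Ker$ being automatic since $\omega_m$ annihilates $A_m$). You should make this explicit, since otherwise the algebraic model you build on has not been connected to the objects in the statement. Once that identification is in place, your maps $N$ and $\iota$ agree with the arithmetic ones (because $\iota\circ N = \nu_{n+1,n}$ on ideal classes), so the two proofs are describing the same transition.
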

\begin{proof}
  Let $A = \id{A}_n, B = \id{A}_{n+1}$ and $N = \Norm_{\K_{n+1},\K_n},
  \iota = \iota_{n,n+1}$ be the norms of fields and the ideal lift
  map, which is injective since $a \in \rg{A}^-$. We let $T = \tau-1$
  with $\tau$ the restriction of the topological generator of $\Gamma$
  to $\K_{n+1}$ and $\omega = \omega_n = (T+1)^{p^{n-1}} - 1$. Then a
  fortiori $\omega A = 0$, and all the properties 1. - 3. of conic
  transitions follow easily. Point 5. is a notation. We show that the
  important additional property 4. follows from the conicity of
  $a$. The direction $\omega A \subset K$ follows from $Y_1 = TX $ in Theorem
  \ref{iw6}. The inverse inclusion is a consequence of point 1. of the
  definition of conic elements. Conversely, if $x \in K$, we may
  regard $x = x_{n+1} \in \id{A}_{n+1}$ as projection of a norm
  coherent sequence $y = (x_m)_{m \in \N} \in \id{A}$: for this we
  explicitly use point 3 of the definition of conic elements. Since $x
  = y_{n+1} = 1$ we have by point 2 of the same definition, $y \in
  \omega_n \cdot \id{A}$. This implies $y_n = N x = 1$; this is the
  required property 4 of Definition \ref{dcontrans}
\end{proof}

The next lemma relates $v_p(a_1)$ to the minimal polynomials $f_a(T)$:
\begin{lemma}
\label{val0}
Let $a \in \rg{A}^-$ be conic and $m = v_p(a_1)$. Then $v_p(f_a(0)) =
m$.  In particular, if $v_p(a_1) = 1$, then $f_a(T)$ is an Eisenstein
polynomial.
\end{lemma}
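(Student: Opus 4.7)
The plan is to identify $\id{A}_1$ with the quotient ring $\Z_p/(f_a(0))$ and read off the $p$-adic valuation of its order.

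First, by Lemma \ref{minp} applied to the principal annihilator ideal of the $\Lambda$-cyclic module $\id{A} = \Lambda a$, the exact annihilator may be chosen as a distinguished polynomial $f_a(T) = T^r + p\,h(T)$ with $h(T) \in \Z_p[ T ]$ of degree strictly less than $r$. In particular $f_a(0) = p\,h(0) \in p\Z_p$, so $v_p(f_a(0)) \geq 1$.

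Next, I would identify $\id{A}_1 = \Lambda a_1$ with $\Lambda/(T, f_a(T))$. Consider the natural projection $\pi \colon \Lambda a \to \id{A}_1$ sending $a \mapsto a_1$. Since $\omega_1 = T$ annihilates $\id{A}_1$, we have $T \Lambda a \subset \ker(\pi)$. The reverse inclusion is exactly property 2 of conicity in Definition \ref{dconic}: if $c = g(T) a$ satisfies $c_1 = 1$, then $c \in \omega_1 \Lambda a = T \Lambda a$. Combining the two inclusions with $\id{A} \cong \Lambda/(f_a(T))$ yields
\[
\id{A}_1 \;\cong\; \Lambda a / T \Lambda a \;\cong\; \Lambda/(T, f_a(T)) \;\cong\; \Z_p/(f_a(0)),
\]
the last isomorphism obtained by reducing $f_a$ modulo $T$. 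Under this chain $a_1$ corresponds to the class of $1$, hence $\ord(a_1) = |\Z_p/(f_a(0))| = p^{v_p(f_a(0))}$, i.e.\ $m = v_p(a_1) = v_p(f_a(0))$, which is the first assertion.

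For the final sentence: if $m = 1$ then $v_p(f_a(0)) = 1$, and combined with the distinguished shape $f_a(T) = T^r + p\,h(T)$ (which says every non-leading coefficient lies in $p\Z_p$) this is precisely the Eisenstein criterion for $f_a$. The only substantive point in the proof is the inclusion $\ker(\pi) \subset T \Lambda a$; one direction is immediate from $\omega_1 = T$, and the reverse, which is the true content, is exactly the structural input encoded by clause 2 of Definition \ref{dconic}. Once this is granted, the statement reduces to a routine computation with distinguished polynomials in $\Lambda$.
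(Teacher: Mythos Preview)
Your proof is correct. Both your argument and the paper's hinge on the same structural input, namely clause 2 of Definition \ref{dconic} applied at level $n=1$, but you deploy it differently. The paper uses it in a single instance: from $b = p^m a$ having $b_1 = 1$ it extracts $p^m a = T g(T) a$, then works to show that the annihilator $T g(T) - p^m$ equals $c f_a(T)$ for a constant $c$, and finally argues case by case that $c$ is a unit, whence $f_a(0) = -c^{-1} p^m$. You instead use clause 2 to pin down the entire kernel of the projection $\Lambda a \twoheadrightarrow \id{A}_1$ as $T\Lambda a$, giving the ring-theoretic identification $\id{A}_1 \cong \Lambda/(T,f_a) \cong \Z_p/(f_a(0))$, from which $v_p(\ord(a_1)) = v_p(f_a(0))$ is immediate.

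Your route is shorter and avoids the degree-matching step and the unit analysis for $c$; it also makes transparent that the statement is nothing more than the specialization $T \mapsto 0$ of the isomorphism $\Lambda a \cong \Lambda/(f_a)$. The paper's computation, on the other hand, produces the slightly sharper byproduct that $f_a(T)$ can be written in the form $T g_1(T) - (\text{unit})\cdot p^m$, which is not visible from the quotient description alone. One minor remark: your appeal to Lemma \ref{minp} is phrased for polynomial ideals in $\Z_p[T]$, whereas the annihilator naturally lives in $\Lambda = \Z_p[[T]]$; the conclusion (that $f_a$ is distinguished) is of course correct and is what the paper assumes throughout, but strictly speaking it comes from Weierstrass preparation together with the fact that $\rg{A}^-$ is $\Z_p$-torsion-free, rather than from Lemma \ref{minp} verbatim.
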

\begin{proof}
  Let $q = p^m$ and $b = q a \in \Lambda a$. Then $b_1 = 0$ and, by
  conicity, it follows that $q a = b = T g(T) a$. It follows that $T
  g(T) - q$ annihilates $a$. We may choose $g$ such that $\deg(g(T) T)
  = \deg(f_a(T))$, so there is a constant $c \in \Z_p$ such that $T
  g(T) - q = c f_a(T)$. Indeed, if $c$ is the leading coefficient of
  $T g(T)$, the polynomial $D(T) = T g(T) - q - c f_a(T)$ annihilates
  $a$ and has degree less than $\deg(f_a)$. Since $f_a$ is minimal,
  either $D(T) = 0$, in which case $c = 1$ and $f_a(T) = T g(T)-q$,
  which confirms the claim, or $D(T) \in p \Z_p[ T ]$ and $c \equiv 1
  \bmod p$. Since $c$ is a unit in this case, we may replace $b$ by
  $c^{-1} b = T g_1(T) a$ and the polynomial $T g(T)$ is now
  monic. The previous argument implies that $f_a(T) = T g_1(T) -
  c^{-1} q$, which completes the proof. Since $f_a(T)$ is
  distinguished, we have $f_a(T) \equiv T^d \bmod p$ and if $m = 1$,
  then $p^2 \nmid f_a(0)$, so $f_a(T)$ is Eisenstein. The converse is
  also true.
\end{proof}

If $m > 1$, we have seen in the previous chapter that there are
minimal polynomials of $\id{A}_{n_0}$ which are essentially binomials;
in particular, they are square free. It would be interesting to derive
from this fact a similar conclusion about $f_a(T)$. We found no
counterexamples in the tables in \cite{EM}; however the coefficients
of $f_{n_0}(T)$ are perturbed in the stable growth too, and there is
no direct consequence that we may derive in the present setting. The
next lemma describes the perturbation of minimal polynomials in stable
growth:
\begin{lemma}
\label{lminpol}
Let $g_n(T) = T^r - p \tilde{g}_n(T)$ be a minimal polynomial of $\id{A}_n$. 
If $n \geq n_0$, then
\begin{eqnarray}
\label{etshift} 
g_n(T) \equiv f_a(T) \bmod \id{A}_{n-1}^{\top}
\end{eqnarray}
\end{lemma}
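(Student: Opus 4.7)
The plan is short: derive the congruence from a single relation that both $g_n$ and $f_a$ satisfy at level $n$, and then transport it down to level $n-1$ via the norm. By definition $g_n(T) a_n = 0$, while $f_a$ --- being the exact annihilator of the full $\Lambda$-module $\Lambda a$ --- annihilates every coordinate of the coherent sequence, in particular $f_a(T) a_n = 0$. Subtracting gives $(g_n - f_a)(T) \in a_n^{\top} = \id{A}_n^{\top}$.

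The second step uses that the field norm $N_{n,n-1} \colon \id{A}_n \to \id{A}_{n-1}$ is $\Lambda$-equivariant (the action of $T = \tau-1$ is compatible with the restriction of the topological generator of $\Gamma$) and that $a = (a_m)_{m \in \N}$ is norm coherent, so $N_{n,n-1}(a_n) = a_{n-1}$. Applying $N_{n,n-1}$ to the identity $(g_n - f_a)(T) a_n = 0$ yields
\[ (g_n - f_a)(T)\, a_{n-1} \;=\; N_{n,n-1}\!\bigl((g_n - f_a)(T) a_n\bigr) \;=\; 0, \]
that is, $g_n - f_a \in a_{n-1}^{\top} = \id{A}_{n-1}^{\top}$, which is precisely \rf{etshift}.

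What requires a quick check, rather than serving as a real obstacle, is that the resulting congruence is not vacuous. For $n \geq n_0$, Lemma \ref{Gabriele} together with Remark \ref{anpol} guarantees that both $g_n$ and $f_a$ are distinguished polynomials of the same degree $\lambda = \prk(\id{A}) = \prk(\id{A}_n)$. Hence $g_n - f_a$ has degree strictly less than $\lambda$, and \rf{etshift} forces its $p$-adic coefficients (taken modulo $p^{n+z} = \exp(\id{A}_n)$) to lie in a proper submodule. The content of the lemma is therefore that the minimal polynomials of successive $\id{A}_n$ stabilize $p$-adically to $f_a$ at the rate dictated by $\exp(\id{A}_{n-1})$, confirming that all essential information about $f_a$ is already encoded in the critical section $\{\id{A}_m : m \leq n_0\}$.
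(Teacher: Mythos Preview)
Your argument is correct for the statement as written, but it is worth noting that the route you take is different from the paper's and, in a sense, proves a weaker fact than the paper's proof actually yields.

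Your approach exploits the surjective norm: since $(g_n-f_a)(T)\,a_n=0$ and $N_{n,n-1}(a_n)=a_{n-1}$, $\Lambda$-equivariance of the norm gives $(g_n-f_a)(T)\,a_{n-1}=0$. This is valid, and indeed shows the inclusion $\id{A}_n^{\top}\subset\id{A}_{n-1}^{\top}$; as you correctly observe, the hypothesis $n\geq n_0$ plays no role in this step and enters only to make the congruence non-trivial.

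The paper argues via the lift rather than the norm. From $n\geq n_0$ both $g_n$ and $f_a$ are distinguished of the same degree $\lambda$, so $g_n-f_a=p\,\delta_n$ with $\deg(\delta_n)<\lambda$. Then $0=p\,\delta_n(T)\,a_n=\delta_n(T)\,\iota_{n-1,n}(a_{n-1})$ by Proposition~\ref{sf}, and injectivity of $\iota$ gives $\delta_n\in\id{A}_{n-1}^{\top}$. This is strictly sharper than what the lemma states: it shows that the \emph{quotient} $(g_n-f_a)/p$ already lies in $\id{A}_{n-1}^{\top}$, not merely the difference itself. Your norm argument cannot recover this factor of $p$, since it only transports annihilation downward without using the relation $p\,a_n=\iota(a_{n-1})$ specific to the stable range.

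In short: your proof is correct and more elementary; the paper's proof uses the stable-range structure to extract one extra power of $p$, which is the genuine content behind the statement.
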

\begin{proof}
  The exact annihilator $f_a(T)$ of $\id{A}$ also annihilates all
  finite level modules $\id{A}_n$. In particular, for $n \geq n_0$ we
  have $\deg(g_n(T)) = \deg(f_a)$ for all minimal polynomials $g_n$ of
  $\id{A}_n$, and thus $\deg(g_n-f_a) < \lambda(a)$. We note that $g_n
  - f_a = p \delta_n(T) \in p \Z_p[ T ]$ with $\deg(\delta_n) < r$. It
  follows that
\[ 0 = p \delta_n(T) a_n = \delta_n(T) \iota_{n-1,n}(a_{n-1}), 
\]
 and since $\iota$ is injective, it follows that $\delta_n(T) \in
 \id{A}_{n-1}^{\top}$, as claimed.
\end{proof}

It is worthwhile noting that if $a$ is conic and $f_a(T) =
\prod_{i=1}^k f_i^{e_i}(T)$ with distinct prime polynomials $f_i(T)$,
then $b_i := f_a(T)/f_i(T) a$ have also conic transitions, but the
modules $\Lambda b_i$ are of course not complementable as
$\Lambda$-modules.
\subsection{Proof of Theorem \ref{main}}
With this, we shall apply the results on conic transitions and prove
the Theorem \ref{main}
\begin{proof}
  Let $a \in \rg{A}^- \setminus p \rg{A}^-$ be conic, let $n_0$ be its
  stabilization index and $\id{A}_n = \Lambda a_n, n \geq 0$ be the
  intermediate levels of $\id{A} = \Lambda a$. If $n_0 = 1$, then the
  Lemma \ref{fukuda} implies that $\lambda(a) = 1$ and $f_a(T)$ is
  linear. If $v_p(a_1) = 1$, then Lemma \ref{val0} implies that
  $f_a(T)$ is Eisenstein. Otherwise, the Lemmata
  \ref{wild} and \ref{initrans} imply that $\prk(\id{A}) < p(p-1)$
  with the exception of flat transition modules with high rank. The
  minimal polynomials at stabilization level are binomials, and this
  completes the proof.
\end{proof}

\subsection{Some examples}
We shall discuss here briefly some examples\footnote{I am grateful to
 an anonymous referee for having pointed out some very useful
  examples related to the present topic.} drawn from the paper of
Ernvall and Mets\"ankyl\"a \cite{EM} and the tables in its
supplement. The authors consider the primes $p = 3$, $\rho = \zeta_p$
and base fields $\K = \K( m ) = \Q[ \sqrt{m}, \rho ]$. They have
calculated the annihilator polynomials of $f_a(T)$ for a large choice
of cyclic $A(\K(m))^-$. Here are some examples:
\begin{example}
\label{e1}
In the case $m = 2732$, $A^-(\K_1(m)) \cong C_{p^2}$ and
  $A^-(\K_2(m)) \cong C_{p^3} \times C_p$. The growth stabilizes and
  the polynomial $f_a(T)$ has degree $2$; the annihilator $f_2(T)$ is
  a binomial, but not $f_a(T)$, so the binomial shape is in general
  obstructed by the term $f_a(T) = f_2(T) + O(p)$.
\end{example}
\begin{example}
\label{e2}
In the case $m = 3512$, we have $A^-(\K_1(m)) \cong C_{p^2}$ and
$A^-(\K_2(m)) \cong C_{p^3} \times C_p \times C_p$. The polynomial $f_a(T)$
has degree $3$ and for $B = A(\K_2(m))^-$ and $A = A(\K_1(m))^-$. This is a wild transition, which is initial and terminal simultaneously. We did not derive a precise structure for such transitions in Lemma \ref{wild}. 
\end{example}
\begin{example}
\label{e3}
In the case $m = -1541$, the authors have found $\lambda =
4$. Unfortunately, the group $A(\K_3(m))$ cannot be computed with PARI, so our
verification restricts to the structure of the transition $(A, B) = (\id{A}_1,
\id{A}_2)$. This is the most interesting case found in the tables of \cite{EM}
and the only one displaying a wild initial transition. The Lemma \ref{wild}
readily implies that the transition $(\id{A}_2, \id{A}_3)$ must be terminal
and $\lambda < (p-1)p = 6$, which is in accordance with the data. The
structure is $\id{A}_2 = C_{p^3} \times C_{p^3} \times C_p$ and with respect
to this group decomposition we have the following decomposition of individual
elements in $A = \id{A}_1$ and $B = \id{A}_2$:
\begin{eqnarray*}
\begin{array}{ c c c c c c c c c c c}
b & = & (1, 0, 0) & & T b & = & (0, 10, 1) & & T^2 b & = & (-6, 9, 1) \\ 
T^3 b & = & (18, -3, 0) & & T^4 b & = & (18, 9, 0) & & T^5 b & = & (0, 9, 0) \\ 
a & = & (0, 12, 1) & & 3 a & = & (0, 9, 0) & & 9 b & = & (9, 0, 0). 
\end{array}
\end{eqnarray*}
Some of the particularities of this examples are: $S(B)$ is generated by $s' =
T^3 b - 2 a$ and it is $\F_p[ T ]$-cyclic, as predicted. Moreover, $s' \in K +
\iota(A)$ but $s \not \in K$ and $p b \not \in \iota(a)$, while $q b \in S(B)[
T ] = S(A)[ T ]$, both facts that were proved in the Lemma \ref{wild}.
\end{example}
\begin{example}
\label{e4}
In all further examples with $\lambda \geq 3$, the fields $\K(m)$ have more
than one prime above $p$ and $\rg{A}^-(m)$ is not conic. For instance, for $m
= 2516$, we also have $A^-(\K_1(m)) \cong C_{p^2}$ and $A^-(\K_2(m)) \cong
C_{p^3} \times C_p \times C_p$, but $T^3 b = 0$, for $b$ a generator of
$A(\K_2(m))^-$. The module is thus obviously not conic. This examples indicate
a phenomenon that was verified in more cases, such as our example in Remark
\ref{rthaine}: an obstruction to conicity arises from the presence of
\textit{floating} elements $b \in \rg{A}^-$. These are defined as sequences $b
= (b_n)_{n \in \N} \in \rg{A}^- \setminus (p, T) \rg{A}^-$ having $b_1 =
0$. When such elements are intertwined in the structure of $\Lambda a$, one
encounters floating elements. It is an interesting question to verify if the
converse also holds: $a \in \rg{A}^- \setminus (p, T) \rg{A}^-$ is conic if it
contains not floating elements. Certainly, the analysis of transitions in
presence of floating elements is obstructed by the fact that the implication
$T x = 0 \Rightarrow x \in A$ is in general false. However, the obstruction
set is well defined by the submodule of floating elements, which indicates a
possible extension of the concepts developed in this paper. The analysis of
floating elements is beyond the scope of this paper and will be undertaken in
subsequent research.
\end{example}
\begin{example}
\label{e5}
 Let $\K = \Q[ \sqrt{-31} ]$ with $A(\K) = C_3$ and only one prime
above $p = 3$. A PARI computation shows that $A(\K_2) = C_{p^2}$, so Fukuda's
Theorem implies that $\rg{A}$ is $\Lambda$-cyclic with linear annihilator. Let
$\KL/\K$ be the cyclic unramified extension of degree $p$. There are three
primes above $p$ in $\KL$ and $A(\KL) = \{ 1 \}$, a fact which can be easily
proved and needs no verification. Let $\KL_n = \KL \cdot \K_n$ be the
cyclotomic $\Z_p$-extension of $\KL$. One can also prove that $A(\KL_n) \cong
(A_n(\K))^p$, so $\rg{A}(\KL)$ is also $\Lambda$ cyclic with the same linear
annihilator polynomial as $\rg{A}(\K)$. Let $b \in \rg{A}(\KL)$ be a generator
of the $\Lambda$-module. The above shows that $b$ is a floating class.

The extension $\KL/\Q$ in this example is galois but not CM and $p$ splits in
$\KL/\K$ in three principal primes. If $\nu \in \Gal(\KL/\K)$ is a generator,
it lifts in $\Gal(\KH_{\infty}/\K)$ to an automorphism $\tilde{\nu}$ that acts
non trivially on $\Gal(\KH_{\infty}/\KL_{\infty})$.

Let $\B_{\infty}$ be the $\Z_p$-extension of $\Q$ and $\KH_{\infty}$
be the maximal $p$-abelian unramified extension of $\K_{\infty}$ and
of $\KL_{\infty}$ (the two coincide in this case); then the sequence
\begin{eqnarray}
\label{bsplit}
 0 \ra \Gal(\KL_{\infty}/\B_{\infty}) \ra
\Gal(\KH_{\infty}/\B_{\infty}) \ra \Gal(\KH_{\infty}/\KL_{\infty}) \ra
0 
\end{eqnarray}
is not split in the above example, and this explains why
$\tilde{\nu}$ lifts to a generator of $X' :=
\Gal(\KH_{\infty}/\K_{\infty})$.

Let $\eu{p}, \nu \eu{p}, \nu^2(\eu{p}) \subset \KL$ be the primes
above $p$ and $I_0, I_1, I_2 \subset \Gal(\KH_{\infty}/\KL)$ be their
inertia groups: then $I_1 = I_0^{\tilde{\nu}}, I_2 =
I_0^{\tilde{\nu}^2}$. Let $I \subset \Gal(\KH_{\infty}/\K)$ be the
inertia of the unique prime above $p$ and $\tau \in
\Gal(\KH_{\infty}/\K)$ be a generator of this inertia. We fix $\tau'$
as a lift of the topological generator of $\Gamma$: it acts in
particular also on $\KL$. Let $\tau$ be a generator of $I_0$ and $a
\in X = \Gal(\KH_{\infty}/\KL_{\infty})$ such that $\tau_1 = a \tau$
is a generator of $I_1$. We assume that both $\tau, \tau_1$ restrict
to a fixed topological generator of $\Gamma =
\Gal(\KL_{\infty}/\K_{\infty})$. Then
\[ \tau_1 = a \tau = \tau^{\tilde{\nu}} = \tilde{\nu}^{-1} \tau \tilde{\nu} \quad 
 \Rightarrow \quad a = \tilde{\nu}^{-1} \tau \tilde{\nu} \tau^{-1} .
\]
Since $\tau$ acts by restriction as a generator of $\Gamma' =
\Gal(\K_{\infty}/\K)$ and $\tilde{\nu}$ generates $X'$, the above computation
implies that $a \in (\Gal(\KH_{\infty}/\K))' = T X' = p X' = X$. In
particular, $a$ is a generator of $X \cong \rg{A}(\KL)$.

In this case we have seen that the primes above $p$ are principal, the module
$\rg{A}(\KL)$ is floating and it is generated by $a = \tau_1 \tau^{-1} \not
\in T X$. Thus $Y_1 = \Lambda a = \Z_p a$ and $[Y_1 : T X] = p$. Since $T X$
is the commutator, there must be a cyclic extension $\KL'/\KL$ of degree $p$
which is $p$-ramified but becomes unramified at infinity. It arises as
follows: let $\KH_2$ be the Hilbert class field of $\K_2$. Then $\KH_2/\KL_2$
is cyclic of degree $p$ and $\Gal(\KH_2/\K) = < \varphi(a_2) >$, with $a_2 \in
A(\K_2)$ a generator. Thus $(T - cp) a_2 = 0$ for some $c \in \Z_p^{\times}$
and $\Gal(\KH_2/\KL_2) = p < \varphi(a_2) > = < \varphi(T a_2) >$. Since $T^2
a_2 = c^2 p^2 a_2$, it follows that $T \Gal(\KH_2/\KL_2) = 0$ and thus
$\KH_2/\KL_1$ is abelian. This induces a cyclic extension $\KL'_1/\KL_1$ which
is $p$-ramified, but becomes unramified already over $\KL_2$. 

It also explains the role of the sequence \rf{bsplit} in Theorem \ref{simram}. Phenomena in this context will be investigated together with the question about floating classes in a subsequent paper.
\end{example}
The prime $p = 3$ is interesting since it immediately display the more
delicate cases $r' = p-1$ and $r = p$ in Lemma \ref{initrans}. We found no
examples with $\lambda > p$, which require an intermediate flat transition
according to the above facts.

\section{The ramification module}
In this section we prove the theorems stated in \S 1.2. The terms and
notations are those introduced in that introductory section. Note that the
choice of $\K$ as a galois CM extension containing the \nth{p} roots of unity
is useful for the simplicity of proofs. If $\K$ is an arbitrary totally real
or CM extension, one can always take its normal closure and adjoin the roots
of unity: in the process, no infinite modules can vanish, so facts which are
true in our setting are also true for subextensions of $\K$ verifying our
assumptions.

Let us first introduce some notations: $\KH_1$ is the $p$-part of the Hilbert
class field of $\K$ and $\overline{\KH}_1 = \KH_1 \cdot \K_{\infty}$;
$\Omega/\K$ is the maximal $p$-abelian $p$-ramified extension of $\K$. It
contains in particular $\K_{\infty}$ and $\zprk(\Omega/\KH_1) =
r_2+1+\id{D}(\K)$, where $\id{D}(\K)$ is the Leopoldt defect. Since $\K$ is
CM, complex multiplication acts naturally on $\Gal(\Omega/\K_{\infty})$ and
induces a decomposition 
\[ \Gal(\Omega/\K_{\infty}) = \Gal(\Omega/\K_{\infty})^+ \oplus
\Gal(\Omega/\K_{\infty})^-; \]
this allows us to define 
\begin{eqnarray}
\label{omdefs}
\Omega^- & = & \Omega^{\Gals(\Omega/\K_{\infty})^+} \\ 
\Omega^+ & = & \Omega^{\Gals(\Omega/\K_{\infty})^-} \nonumber,
\end{eqnarray} 
two extensions of $\K_{\infty}$. 

We shall review Kummer radicals below and derive a strong property of galois groups which are $\Lambda$-modules with annihilator a power of some polynomial: the order reversal property. Combined with an investigation of the galois group of $\Omega^-/\KH_1$ by means of class field theory, this leads to the proof of Theorem \ref{simram}.
\subsection{Kummer theory, radicals and the order reversal}
Let $\rg{K}$ be a galois extension of $\Q$ which contains the \nth{p} roots of
unity and $\KL/\rg{K}$ be a finite Kummer extension of exponent $q = p^m, m
\leq n$. Its classical Kummer radical $\rad(\KL/\rg{K}) \subset
\rg{K}^{\times}$ is a multiplicative group containing $(\rg{K}^{\times})^{q}$
such that $\KL = \rg{K}[ \rad(\KL)^{1/q} ]$ (e.g. \cite{La1}, Chapter VIII, \S
8). Following Albu \cite{Al}, we define the \textit{cogalois} radical
\begin{eqnarray}
\label{cog}
\Rad(\KL/\rg{K}) = \left([ \rad(\KL/\rg{K})^{1/q} ]_{\tiny{\rg{K}}^{\times}}\right) /
\rg{K}^{\times},
\end{eqnarray}
where $[ \rad(\KL/\rg{K})^{1/q} ]_{\tiny{\rg{K}}^{\times}}$ is the multiplicative
$\rg{K}^{\times}$-module spanned by the roots in $\rad(\KL/\rg{K})^{1/q}$ and
the quotient is one of multiplicative groups. Then $\Rad(\KL/\rg{K})$ has the
useful property of being a finite multiplicative group isomorphic to
$\Gal(\KL/\rg{K})$. For $\rho \in \Rad(\KL/\rg{K})$ we have $\rho^q \in
\rad(\KL/\rg{K})$; therefore, the Kummer pairing is naturally defined on
$\Gal(\KL/\tiny{\rg{K}}) \times \Rad(\KL/\tiny{\rg{K}})$ by
\[ < \sigma, \rho >_{\Rad(\KL/\tiny{\rg{K}})} = < \sigma, \rho^q >_{\rad(\KL/\tiny{\rg{K}})}. \]
Kummer duality induces a twisted isomorphism of $\Gal(\rg{K}/\Q)$ - modules
$\Rad(\KL/\rg{K})^{\bullet} \cong \Gal(\KL/\rg{K})$. Here $g \in
\Gal(\rg{K}/\Q)$ acts via conjugation on $\Gal(\KL/\rg{K})$ and via $g^{*} :=
\chi(g) g^{-1}$ on the twisted module $\Rad(\KL/\rg{K})^{\bullet}$; we denote
this twist the \textit{Leopoldt involution}. It reduces on $\Gal(\rg{K}/\K)$
to the classical Iwasawa involution (e.g. \cite{La}, p. 150).

We now apply the definition of cogalois radicals in the setting of Hilbert
class fields. Let $\K$ be like before, a CM galois extension of $\Q$
containing the \nth{p} roots of unity and we assume that, for sufficiently
large $n$, the \nth{p^n} roots are not contained in $\K_{n-1}$, but they are
in $\K_n$. Let $\KL \subset \KH_{\infty}$ be a subextension with galois group
$\Gal(\KL/\K_{\infty}) = \varphi(M) \vert_{\KL}$, with $M \subset \rg{A}$ a
$\Lambda$-submodule which is $\Z_p$-free. Let $\KL_n = \KL \cap \KH_n$ be the
finite levels of this extension and let $z \in \Z$ be such that $\exp(M_n) =
p^{n+z}$ in accordance with \rf{ordinc}. If $z < 0$, we may take $z = \max(z,
0)$. We define $\KL'_n = \KL_n \cdot \K_{n+z}$, so that $\KL'_n/\K_{n+z}$ is a
Kummer extension and let $R_n = \Rad(\KL'_n/\K_{n+z}$ and $B_n \cong
R_n^{p^{n+z}} \subset \K_n^{\times}/(\K_n^{\times})^{p^{n+z}}$. Then
\ref{ordinc} implies, by duality, that $R_{n+1}^p = R_n$, for $n > n_0$; the
radicals form a norm coherent sequence both with respect to the dual norm
$N_{m,n}^*$ and to the simpler $p$-map. Since $\KL = \cup_n \KL'_n$, we may
define $\Rad(\KL/\K_{\infty}) = \varprojlim_n R_n$.  The construction holds in
full generality for infinite abelian extensions of some field containing $\Q[
\mu_{p^{\infty}} ]$, with galois groups which are $\Z_p$-free
$\Lambda$-modules and projective limits of finite abelian $p$-groups. But we
shall not load notation here for presenting the details. Also, the extension
$\KL$ needs not be unramified, and we shall apply the same construction below
for $p$-ramified extensions.

We gather the above mentioned facts for future reference in
\begin{lemma}
\label{rads}
Let $z  \in \N$ be such that $\ord(a_n) \leq p^{n+z}$ for all $n$ and
$\K'_n = \K_{n+z}, \KL'_n = \KL_n \cdot \K_{n+z}$. Then $\KL'_n/\K'_n$ are
abelian Kummer extensions with galois groups $\Gal(\KL'_n/\K_{n+z}) \cong
\varphi(M_n)$, galois over $\K$ and with radicals $R_n = \Rad(\KL'_n/\K_{n+z})
\cong (\Gal(\KL'_n/\K_{n+z}))^{\bullet}$, as $\Lambda$-modules.  Moreover, if
$M = \Lambda c$ is a cyclic $\Lambda$-module, then there is a
$\nu^*_{n+1,n}$-compatible system of generators $\rho_n \in R_n$ such that
$R_n^{\bullet} = \Lambda \rho_n$ and, for $n$ sufficiently large,
$\rho_{n+1}^p = \rho_n$.  The system $R_n$ is projective and the limit is $R =
\varprojlim_n R_n$. We define
\[ \K_{\infty}[ R ] = \cup_n \K_{n+z}[ R_n ] = \KL. \]
\end{lemma}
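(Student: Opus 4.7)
The plan is to obtain the statement essentially as a dualization, via the Leopoldt-twisted Kummer pairing, of the module-theoretic facts already established for $M \subset \rg{A}$ and its finite quotients $M_n = \varphi(M)\vert_{\KL_n}$. First I would verify the basic Kummer setup at finite level: since by construction $\K_{n+z}$ contains $\mu_{p^{n+z}}$ and $\exp(M_n)\mid p^{n+z}$ by the choice of $z$ in \rf{ordinc}, the compositum $\KL'_n=\KL_n\cdot\K_{n+z}$ is an abelian extension of $\K_{n+z}$ of exponent dividing $p^{n+z}$, hence a Kummer extension. Because $\KL_n\subset\KH_n$ is characterized as the fixed field of the image of $M_n$ under the Artin map $\varphi$ and $M_n$ is a $\Lambda$-submodule of $\rg{A}^-$ stable under $\Gal(\K/\Q)$, the extension $\KL'_n/\K$ is itself Galois and $\Gal(\KL'_n/\K_{n+z})\cong\varphi(M_n)$ as $\Lambda$-modules.

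Next I would apply the cogalois formalism recalled in \rf{cog} to define $R_n=\Rad(\KL'_n/\K_{n+z})$ and invoke the standard non-degeneracy of the Kummer pairing on $\Gal(\KL'_n/\K_{n+z})\times R_n$ to deduce $R_n\cong\Gal(\KL'_n/\K_{n+z})^{\bullet}$ as modules over $\Z_p[\Gal(\K/\Q)]$, the $\bullet$ encoding the Leopoldt involution $g\mapsto\chi(g)g^{-1}$. Restricted to $\Gamma$ this becomes the Iwasawa involution $*$, so $R_n^{\bullet}$ is a $\Lambda$-module isomorphic to $M_n$ on the nose, and in particular $R_n^{\bullet}=\Lambda\rho_n$ whenever $M=\Lambda c$ is cyclic, choosing $\rho_n$ dual to a generator of $\varphi(M_n)$.

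The compatibility $\rho_{n+1}^p=\rho_n$ for $n\geq n_0$ is the dual of the identity $p\cdot c_{n+1}=\iota_{n,n+1}(c_n)$ in $\id{A}_{n+1}$ supplied by Proposition \ref{sf} and Lemma \ref{Gabriele}: under the Kummer pairing the lift map $\iota_{n,n+1}$ corresponds, via the twist and restriction $R_{n+1}\to R_n$, to raising the dual generator to the $p$-th power, while the norm $N_{n+1,n}$ corresponds to $\nu_{n+1,n}^{*}$. This is where I expect the main bookkeeping difficulty: one has to keep track of the twist carefully and check that a generator $\rho_{n+1}$ of $R_{n+1}^{\bullet}$ can indeed be chosen so that its image in $R_n$ under the inclusion $\K_{n+z}\subset\K_{n+1+z}$ becomes exactly $\rho_n$, not just up to a unit in $(\Z_p[T])^{\times}$. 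The freedom is controlled by replacing $\rho_{n+1}$ by $u(T)\rho_{n+1}$ for a suitable lift $u(T)$ of the correcting unit at level $n$, which is possible because projective limits of cyclic modules over local rings admit compatible generators (an application of Nakayama together with the surjectivity of $R_{n+1}\to R_n$ dual to injectivity of $\iota_{n,n+1}$).

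Finally, projectivity of the system $(R_n)$ is immediate from the dual of the injective lift maps $\iota_{n,n+1}$, giving $R=\varprojlim_n R_n$. The identification $\K_\infty[R]=\KL$ follows by construction, since $\K_{n+z}[R_n]=\KL'_n=\KL_n\cdot\K_{n+z}$ by the defining property of the cogalois radical, and taking the union over $n$ recovers $\cup_n\KL_n\cdot\K_\infty=\KL$.
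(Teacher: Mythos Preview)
Your proposal is correct and follows the same approach as the paper. In fact the paper does not give a separate proof of this lemma at all: it is introduced with the phrase ``We gather the above mentioned facts for future reference,'' and the content is the discussion immediately preceding it --- the cogalois-radical formalism \rf{cog}, the twisted Kummer duality $\Rad^{\bullet}\cong\Gal$, and the observation that \rf{ordinc} dualizes to $R_{n+1}^p=R_n$ for $n>n_0$. Your write-up simply makes this discussion explicit, and your additional care about choosing compatible generators (adjusting $\rho_{n+1}$ by a unit in $(\Z_p[T])^{\times}$) fills in bookkeeping that the paper leaves implicit.
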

Note that the extension by the projective limit of the radicals $R$ is a
convention, the natural structure would be here an injective limit. However,
this convention is useful for treating radicals of infinite extensions as
stiff objects, dual to the galois group which is a projective
limit. Alternatively, one can of course restrict to the consideration of the
finite levels.

The order reversal is a phenomenon reminiscent of the inverse galois
correspondence; if $M$ is cyclic annihilated by $f^n(T)$, with $f$ a
distinguished polynomial, then there is an inverse correspondence between the
$f$-submodules of $M$ and the $f^*$ submodules of the radical $R$. The result
is the following:
\begin{lemma}
\label{revers}
Let $f \in \Z_p[ T ]$ be a distinguished polynomial and $a \in A^- \setminus
A^p$ have characteristic polynomial $f^m$ for $m > 1$ and let $\id{A}_n =
\Lambda a_n, \id{A} = \Lambda$. Assume that $\KL \subset \KH_{\infty}$ has
galois group $\Delta = \Gal(\KL/\K_{\infty}) \cong \id{A}$ and let $R =
\Rad(\KL/\K_{\infty})$. At finite levels, we have $\Gal(\KL_n/\K_n) \cong
\id{A}_n$ and $R_n = \Rad(\KL'_n/\K_{n+z})$, with $R_n = \Lambda \rho_n$. Then
\begin{eqnarray}
\label{ordrevers}
\lan \varphi(a_n)^{f^k}, \rho_n^{(f^*)^{j}} \ran_{\tiny{\KL'_n/\K_{n+z}}} = 1 \quad \hbox{ for $k + j  \geq m$}.
\end{eqnarray} 
\end{lemma}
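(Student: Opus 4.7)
The plan is to reduce the claim to the single identity that the Kummer pairing intertwines the $\Lambda$-action on galois with the $*$-twisted action on radicals. Once this is set up, both $f^k$ and $(f^*)^j$ can be collected on the radical side, and the claim becomes the statement that $(f^*)^m$ annihilates $\rho_n$, which is the content of Kummer duality.

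First I would verify the \emph{pairing identity}
\[
\lan \sigma^{h(T)}, \rho \ran_{\tiny{\KL'_n/\K_{n+z}}} \; = \; \lan \sigma, \rho^{h^*(T)} \ran_{\tiny{\KL'_n/\K_{n+z}}}, \qquad h \in \Lambda.
\]
By $\Z_p$-linearity it suffices to check it for a topological generator $\tau \in \Gamma$, where it reduces to the standard Kummer-pairing equivariance $\lan \sigma^\tau, \rho^\tau \ran = \lan \sigma, \rho \ran^{\chi(\tau)}$. Rearranging with $\rho = \tau \cdot (\tau^{-1}\rho)$ and using that $\Gamma$ acts on $\mu_{p^\infty}$ via $\chi$ gives $\lan \sigma^\tau, \rho \ran = \lan \sigma, \rho^{\chi(\tau)\tau^{-1}} \ran = \lan \sigma, \rho^{\tau^*} \ran$. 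Extending the substitution $\tau \mapsto \tau^*$ (equivalently $T \mapsto T^*$) as a $\Z_p$-algebra map yields the identity for arbitrary $h \in \Lambda$.

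Next I would apply the identity with $h = f^k$ to move the exponent across:
\[
\lan \varphi(a_n)^{f^k}, \rho_n^{(f^*)^j} \ran \; = \; \lan \varphi(a_n), \bigl(\rho_n^{(f^*)^j}\bigr)^{(f^*)^k} \ran \; = \; \lan \varphi(a_n), \rho_n^{(f^*)^{k+j}} \ran.
\]
Now I need to show $\rho_n^{(f^*)^m} = 1$. Since $\id{A} = \Lambda a$ has characteristic polynomial $f^m$ and is $\Lambda$-cyclic, $f^m$ annihilates $a$ and a fortiori $a_n$, so $\varphi(a_n)^{f^m} = 1$. The Kummer/Leopoldt duality $R_n^{\bullet} \cong \Gal(\KL'_n/\K_{n+z})$ as $\Lambda$-modules (recalled in the paragraph preceding Lemma~\ref{rads}) says that the twisted action on $R_n$ matches the natural conjugation action on $\Gal$; therefore the natural annihilator of $R_n$ is obtained from the annihilator of $\Gal(\KL'_n/\K_{n+z})$ by applying $*$, which gives $(f^*)^m$. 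Hence $\rho_n^{(f^*)^m} = 1$, and for any $k + j \geq m$ we can write $(f^*)^{k+j} = (f^*)^{k+j-m} \cdot (f^*)^m$ and conclude $\rho_n^{(f^*)^{k+j}} = 1$, which yields the lemma.

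The only delicate point is the bookkeeping in the pairing identity: the involution $*$ is defined so that $f^*(T) = f(T^*)$ with $T^* = (p-T)/(T+1)$, and one must make sure that the passage from the equivariance under a single $\tau$ to an arbitrary element of $\Lambda$ respects both the non-commutativity of the $*$-twist and the multiplicative notation on radicals. This is routine once the case of $h = \tau$ is pinned down, but it is the step where sign and twist conventions could easily go astray; everything else is a direct consequence of $f^m \cdot a = 0$ together with Kummer duality.
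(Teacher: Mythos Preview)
Your proof is correct and follows essentially the same route as the paper: move the $\Lambda$-exponent across the Kummer pairing via the equivariance $\lan \sigma^{h}, \rho \ran = \lan \sigma, \rho^{h^*} \ran$, then use that $f^m$ annihilates $a_n$ (and dually $(f^*)^m$ annihilates $\rho_n$) to kill the pairing when $k+j \geq m$. The paper states the equivariance as a known fact and records both endpoints $\lan g, \rho^{(f^*)^{j+k}} \ran = \lan g^{f^{j+k}}, \rho \ran$, whereas you spell out the reduction to the single generator $\tau$; otherwise the arguments coincide.
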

\begin{proof}
Let $g = \varphi(a_n) \in \Delta_n$ be a generator and $\rho \in R_n$ generate
the radical. The equivariance of Kummer pairing implies
\begin{eqnarray*}
\lan g^{f^k}, \rho^{(f^*)^{j}} \ran_{\KL'_n/\K_{n+z}}  =  
\lan g, \rho^{(f^*)^{j+k}} \ran
 =  \lan g^{f^{j+k}}, \rho \ran.
\end{eqnarray*}
By hypothesis, $a_n^{f^m} = 1$, and using also duality, $g^{f^m} =
\rho^{(f^*)^m} = 1$. Therefore, the Kummer pairing is trivial for $k+j \geq
m$, which confirms \rf{ordrevers} and completes the proof.
\end{proof}
It will be useful to give a translation of \rf{ordrevers} in terms of
projective limits: under the same premises like above, writing $\rho =
\varprojlim_n \rho_n$ for a generator of the radical $R =
\Rad(\KL/\K_{\infty})$, we have
\begin{eqnarray}
\label{infordrevers}
\lan \varphi(a)^{f^k}, \rho^{(f^*)^{j}} \ran_{\tiny{\KL/\K}} = 1 \quad \hbox{ for $k + j  \geq m$}.
\end{eqnarray} 
We shall also use the following simple result:
\begin{lemma}
\label{t2}
Let $\K$ be a CM galois extension of $\Q$ and suppose that $(\rg{A}')^-(T)
\neq 0$. Then $\ord_T(\rg{A}^-(T)) > 1$.
\end{lemma}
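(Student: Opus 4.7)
The plan is to prove the contrapositive: assume $\ord_T(\rg A^-(T))\le 1$ and deduce $(\rg A')^-(T)=0$. The subcase $\rg A^-(T)=0$ is immediate, since then $T$ acts injectively on $\rg A^-$; because every prime above $p$ is totally ramified in $\K_\infty/\K$ its class is $\Gamma$-fixed, so $T\rg B^-=0$, and injectivity of $T$ forces $\rg B^-=0$, whence $(\rg A')^-=\rg A^-$ has no $T$-power torsion. Otherwise $\rg A^-(T)=\rg A^-[T]\neq 0$, and since $(\rg A')^-$ is Noetherian over $\Lambda$, the implication $M[T]=0\Rightarrow M(T)=0$ reduces the goal to $(\rg A')^-[T]=0$.

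I would then apply the snake lemma to the short exact sequence $0\to\rg B^-\to\rg A^-\to(\rg A')^-\to 0$ with the endomorphism $T$. Using $T\rg B^-=0$ one obtains
\[
  0\to\rg B^-\to\rg A^-[T]\to(\rg A')^-[T]\xrightarrow{\delta}\rg B^-\to\rg A^-/T\rg A^-\to(\rg A')^-/T(\rg A')^-\to 0,
\]
where $\delta(\bar x)=Tx$ for any lift $x\in\rg A^-$. If $\delta(\bar x)\neq 0$ for some $\bar x\in(\rg A')^-[T]$, then $T^2x=T(Tx)=0$ (again by $T\rg B^-=0$) while $Tx\neq 0$, giving $x\in\rg A^-[T^2]\setminus\rg A^-[T]$, which contradicts $\ord_T(\rg A^-(T))\le 1$. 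Hence $\delta=0$ and the snake sequence collapses to $(\rg A')^-[T]\cong\rg A^-[T]/\rg B^-$.

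It therefore remains to establish $\rg A^-[T]=\rg B^-$. I would derive this from Iwasawa's Theorem \ref{iw6} at the base level: $\rg A/Y_1\cong A_1$ with $Y_1=T\rg A+[a_2,\dots,a_s]_{\Z_p}$, so modulo $T\rg A$ the ramification generators span the entire cokernel. After restriction to the minus part, where the CM hypothesis yields a clean splitting under complex conjugation, the Herbrand-type balance $|\rg A^-[T]|=|\rg A^-/T\rg A^-|$ for the $T$-torsion part of $\rg A^-$ identifies the $\Gamma$-invariants with the images of the $a_i$, and these are precisely the generators of $\rg B^-$.

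The main obstacle is exactly this last identification $\rg A^-[T]\subset\rg B^-$, which is class-field-theoretic rather than formal. The snake-lemma bookkeeping uses only the ramification fact $T\rg B^-=0$; the substantive content, and the place where the CM hypothesis is genuinely used, is to show that no $\Gamma$-fixed class in the minus part of $\rg A$ escapes the $\Z_p$-span of the ramified-prime classes above $p$. This is the same piece of information needed for the $T$-torsion half of Theorem \ref{simram}, so I expect Lemma \ref{t2} to serve as the bookkeeping device that promotes the equality $\rg A^-[T]=\rg B^-$ to the equality $\rg A^-(T)=\rg A^-[T]$ appearing in Theorem \ref{simram}.
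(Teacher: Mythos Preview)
Your snake-lemma reduction is formally correct: under the hypothesis $\ord_T(\rg{A}^-(T))\le 1$ the connecting map $\delta$ vanishes and you are left with $(\rg{A}')^-[T]\cong \rg{A}^-[T]/\rg{B}^-$. The problem is the input you then require, namely $\rg{A}^-[T]\subset \rg{B}^-$. You recognize this as the ``main obstacle'', but your proposed justification (Iwasawa's $Y_1$ together with a Herbrand balance) does not work here: the modules $\rg{A}^-[T]$ and $\rg{A}^-/T\rg{A}^-$ are $\Z_p$-modules of positive rank, not finite groups, so no cardinality identity is available, and Theorem~\ref{iw6} describes the \emph{quotient} $\rg{A}/Y_1$, not the $T$-kernel. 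More seriously, the inclusion $\rg{A}^-[T]\subset\rg{B}^-$ is exactly one half of Theorem~\ref{simram}, and in the paper Lemma~\ref{t2} is one of the ingredients used to prove that theorem (it enters in the construction of Lemma~\ref{at} and in Lemma~\ref{gaussum}). So the dependency runs the opposite way from what you guessed: Lemma~\ref{t2} must be established \emph{before} one knows $\rg{A}^-[T]=\rg{B}^-$, not as a bookkeeping consequence of it.

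The paper's own argument avoids this circularity by proving the lemma directly. One picks a nonzero $a'\in(\rg{A}')^-[T]$, lifts it to $a\in\rg{A}^-$, and observes that $a^T\in\rg{B}^-$, so $a^{T^2}=0$; thus it suffices to show $a^T\neq 0$. Assuming $a^T_n=1$ for large $n$, one writes $\eu{Q}_n^{p^{n+z}}=(\alpha_0)$ for a prime $\eu{Q}_n\in a_n$, passes to $\alpha=\alpha_0/\overline{\alpha_0}$, and uses that $\eu{Q}_n^{(1-\jmath)T}$ is principal generated by a $p$-unit $\rho_n$. Taking norms to $\K$ and applying Hilbert~90 shows $\rho_n^{p^k}=\pm x^T$ for some $x\in\K_n^\times$, and comparing the ideal factorizations forces $\ord(a_n)\le p^k$, a fixed bound independent of $n$. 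This contradicts $\ord(a_n)=p^{n+z}\to\infty$. The whole thing is an explicit computation with ideals and units; no prior knowledge of $\rg{A}^-[T]=\rg{B}^-$ is assumed. In fact the ``$b_n=1$'' case of this proof is precisely what later feeds into Theorem~\ref{simram}, not the other way around.
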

\begin{proof}
  Assuming that $(\rg{A}')^-(T) \neq 0$, there is some $a = (a_n)_{n
    \in \N} \in \rg{A}^-$ with image $a' \in (\rg{A}')^-[ T ]$. We
  show that $\ord_T(a) = 2$.  Let $\eu{Q}_n \in a_n$ be a prime and
  $n$ sufficiently large; then $\ord(a_n) = p^{n+z}$ for some $z \in
  \Z$ depending only on $a$ and not on $n$. Let $(\alpha_0) =
  \eu{Q}^{p^{n+z}}$ and $\alpha = \alpha_0/\overline{\alpha_0}$; since
  $a' \in (\rg{A}')^-[ T ]$ it also follows that $a_n^T \in \rg{B}^-$
  and thus $\eu{Q}^T = \eu{R}_n$ wit $b_n := [ \eu{R}_n ] \in
  \rg{B}_n$. If $b_n \neq 1$, then $\ord_T(a) = 1 + \ord_T(a') = 2$,
  and we are done.

  We thus assume that $b_n = 1$ and draw a contradiction. In this case
  $\eu{R}_n^{1-\jmath} = (\rho_n)$ is a $p$-unit and $(\alpha^T) =
  (\rho_n^{p^{n+z}})$, so
\[ \alpha^T = \delta \rho_n^{p^{n+z}}, \quad \delta \in \mu_{p^n}. \]
Taking the norm $N = N_{\K_n/\K}$ we obtain $1 = N(\delta)
N(\rho_n)^{p^{n+z}}$. The unit $N(\delta) \in \mu(\K) = < \zeta_{p_k} >$ -- we
must allow here, in general, that $\K$ contains the \nth{p^k} roots of unity,
for some maximal $k > 0$. It follows that $\rho_1 := N(\rho_n)$ verifies
$\rho_1^{p^{n+z}} = \delta_1$, and since $\delta_1 \not \in E(\K)^{p^{k+1}}$,
it follows that $\rho_1^{p^k} = \pm 1$ and by Hilbert 90 we deduce that
$\rho_n^{p^k} = \pm x^T, x \in \K_n^{\times}$. In terms of ideals, we have
then 
\begin{eqnarray*}
\eu{Q}^{(1-\jmath) T p^{n+z}} & = & (\alpha^T) = (x^{T p^{n+z-k}}), \quad
\hbox{hence} \\
\left(\eu{Q}^{(1-\jmath) p^k}/(x)\right)^{T p^{n+z-k}} & = & (1) \quad
\Rightarrow (\eu{Q}^{(1-\jmath) p^k}/(x))^T = (1).
\end{eqnarray*}   
But $\eu{Q}$ is by definition not a ramified prime, so the above implies that
$a_n$ has order bounded by $p^k$, which is impossible since $a_n \in
A_n^-$. This contradiction confirms the claim and completes the proof of the
lemma. 
\end{proof}

\subsection{Units and the radical of $\Omega$}
The extension $\Omega/\K$ is an infinite extension and $\zprk(\Gal(\Omega/\K))
= \id{D}(\K) + r_2(\K_n) + 1$. Here $r_2(\K_n)$ is the number of pairs of
conjugate complex embedding and the $1$ stands for the extension
$\K_{\infty}/\K$. Let $\wp \subset \K$ be a prime above $p$, let $D(\wp)
\subset \Delta$ be its decomposition group and $C = \Delta/D(\wp)$ be a set of
coset representatives in $\Delta$. We let $s = | C |$ be the number of primes
above $p$ in $\K$. Moreover
\[ \zprk\left(\Gal((\Omega^- /\K_{\infty})\right) =
  r_2(\K_n) . \] It is a folklore fact, which we shall prove constructively
  below, that the \textit{regular} part $r_2(\K_n)$ in the above rank stems
  from $\Omega^- \subset \Omega_{E}$, where $\Omega_E = \cup_n \K_n[ E_n\pn
  ]$. The radical is described precisely by:
\begin{lemma}
\label{r2}
Notations being like above, we define for $n > 1$: $\id{E}'_n = \{
e^{\nu_{n,1}^*} : e \in E_n \}$ and $\id{E}_n = \id{E}'_n \cdot (E_n)^{p^n}$.
Then
\begin{eqnarray}
\label{ommin}
\Omega^- = \KH_1 \cdot \cup_n \K_n[ \id{E}_n^{1/p^n} ]
\times \T_1,
\end{eqnarray} 
where $\T_1/\K_1$ is an extension which shall be described in the proof. It
has group $\Gal(\T_1/\K_1) \cong (\ZM{p})^{s-1}$.
\end{lemma}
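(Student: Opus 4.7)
The strategy is to describe $\Omega^-$ via Kummer theory, exploiting the fact that $\K_\infty$ contains all $p$-power roots of unity, so for $n$ large enough every finite subextension of $\Omega^-/\K_\infty$ becomes a Kummer extension of some $\K_n$. The plan is to identify the Kummer radical $R^- \subset \K_\infty^\times/(\K_\infty^\times)^{p^\infty}$ of $\Omega^-/\K_\infty$, decompose it into a "unit part" and a "prime-above-$p$ part", translate the minus condition through the Leopoldt involution, and then read off the three factors of the right-hand side of \rf{ommin}.

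First I would note that, by the Leopoldt involution described in \S 4.1, the radical $R^-$ transforms under complex conjugation $\jmath$ by $\rho^\bullet = \chi(\jmath)\jmath^{-1} = \jmath$, so $R^-$ is in fact the $(+1)$-eigenspace for $\jmath$ acting on the radical $R$ of $\Omega/\K_\infty$. A class $[\alpha] \in R$ lies in $R^-$ exactly when $\alpha^{1-\jmath} \in (\K_n^\times)^{p^n}$ for suitable $n$. Since $\Omega/\K$ is $p$-ramified, the defining property of the radical gives $(\alpha)=\eu{I}\cdot \eu{J}^{p^n}$ with $\eu{I}$ supported on primes above $p$ and $\eu{J}$ arbitrary; the extension $\KH_1\cdot\K_\infty/\K_\infty$ accounts for the contribution of $\eu{J}$ modulo principal ideals, and I would factor this off first. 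After this step the residual radical is supported only on primes above $p$.

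Next I would split the remaining radical according to whether $\eu{I}$ is trivial. If $\eu{I}=(1)$, then $\alpha\in E_n$ is a unit. Imposing the minus condition via $e\mapsto e^{1-\jmath}$ and rewriting $1-\jmath$ through the reflected norm gives, up to $p^n$-th powers, exactly elements of $\id{E}_n = \id{E}'_n\cdot(E_n)^{p^n}$: the operator $\nu_{n,1}^*$ is the correct expression of the minus projection compatible with the tower, because it is the $*$-image of the norm $\nu_{n,1}=\omega_n/T$ and intertwines the Leopoldt involution with the norm coherence of Lemma \ref{rads}. This produces the factor $\cup_n \K_n[\id{E}_n^{1/p^n}]$. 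If instead $\eu{I}\neq (1)$, then $\alpha$ is a $p$-unit whose divisor is a non-trivial anti-invariant $\Z$-combination of the primes $\wp_1,\ldots,\wp_s$ above $p$, modulo principal anti-invariant divisors. The anti-invariant part of the divisor group of primes above $p$ is free of rank $s$ modulo the global image, and taking the quotient by the principal class leaves a free $\F_p$-module of rank $s-1$; adjoining $p$-th roots of (any lifts of) a basis yields $\T_1/\K_1$ of type $(\Z/p)^{s-1}$, and the higher $p$-power roots are absorbed into the first two factors by the norm-coherence of Lemma \ref{rads}.

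Finally I would check consistency: combining the three contributions, the $\Z_p$-rank of $\Gal(\Omega^-/\K_\infty)$ becomes $\lambda(\rg{A}^-)$ (from $\KH_1$-part) $+\;r_2(\K)$ (from the unit part, using that $\id{E}_n/\id{E}_n\cap(\K_n^\times)^{p^n}$ has the correct $p$-rank $r_2$ in the minus component) $+\;0$ (from the finite $\T_1$), which matches the expected rank $r_2$ of $\Omega^-/\K_\infty$ modulo finite torsion, giving a sanity check. I expect the main obstacle to be the verification that the $p$-power roots of $\id{E}_n$ and of the $p$-unit generators of $\T_1$ are truly independent, i.e.\ that no non-trivial anti-invariant relation in $\K_n^\times/(\K_n^\times)^{p^n}$ identifies a unit-radical element with a prime-supported one; this requires Hilbert 90 applied to $1-\jmath$, controlling the roots of unity appearing in the descent exactly as in the final paragraph of the proof of Lemma \ref{t2}, and it is precisely this step that forces the restriction to $n\geq 1$ and the explicit use of $\nu_{n,1}^*$ rather than $\nu_{n,1}$.
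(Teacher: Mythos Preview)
Your overall strategy---decompose the Kummer radical of $\Omega^-/\K_\infty$ into a class-group piece, a unit piece, and a prime-above-$p$ piece---is reasonable and close in spirit to the paper's, which also separates the torsion contribution $\T_1$ from the unit-supported part $\Omega^-\cap\Omega_E$. But there is a genuine conceptual gap in how you obtain the shape $\id{E}_n = \{e^{\nu_{n,1}^*}\}\cdot (E_n)^{p^n}$.

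You write that ``imposing the minus condition via $e\mapsto e^{1-\jmath}$ and rewriting $1-\jmath$ through the reflected norm gives \ldots\ elements of $\id{E}_n$'' and that ``$\nu_{n,1}^*$ is the correct expression of the minus projection''. This is not where $\nu_{n,1}^*$ comes from. The operator $\nu_{n,1}^* = \omega_n^*/T^*$ lives in $\Z_p[\Gamma]$, not in $\Z_p[\langle\jmath\rangle]$; it has nothing to do with $1-\jmath$. The actual source is that $\Omega$ is abelian over $\K$ (not merely over $\K_\infty$), so $\Gamma$ acts trivially on $\Gal(\Omega/\K_\infty)$, i.e.\ $T$ annihilates the Galois group. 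By Kummer duality this means $T^*$ annihilates the radical modulo $p^n$-th powers: for $x\in\rad(\Omega^-/\K_n)\cap E_n$ one has $x^{T^*}\in E_n^{p^n}$. The paper then uses the group-ring identity that $T^* x \equiv 0 \bmod (\omega_n, p^n)\Lambda$ iff $x\in\nu_{n,1}^*\Lambda$, which forces $x\in\id{E}_n$. You never invoke the abelianness of $\Omega/\K$, and without it your argument does not produce $\nu_{n,1}^*$ at all; the minus condition on $\jmath$ is a separate constraint, handled in the paper via class field theory ($(U^{(1)})^-\cap\overline{E}=\mu_p$) rather than through the radical.

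A second, smaller issue: your description of $\T_1$ as coming from ``anti-invariant'' divisors of rank $s-1$ over $\F_p$ is not quite right either. In the paper $\T_1$ arises from the local torsion $\id{T}(U^-)=\prod_{\nu\in C}\mu_p$ modulo the diagonal global $\mu_p$, which is visibly $(\Z/p)^{s-1}$; the explicit radicals are $\pi_\nu^{1/p}$ with $(\pi_\nu)=\wp^{\nu h}$. Your divisor-theoretic count does not explain why the exponent is exactly $p$ (rather than a higher $p$-power), nor why the rank drops by exactly one.
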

\begin{proof}
  We show that the subgroups $\id{E}_m$ give an explicite construction
  of $\Omega^-$, as radicals. The proof uses reflection,
  class field theory and some technical, but strait forward
  estimations of ranks.

Let $U = \id{O}(\K \otimes_{\Q} \Q_p)$ and $U^{(1)}$ be the units congruent to
one modulo an uniformizor in each completion of $\K$ at a prime above $p$. The
global units $E_1 = E(\K_1)$ embed diagonally in $U$ and we denote by
$\overline{E}$ the completion of this embedding, raised to some power coprime
to $p$, so that $\overline{E} \subset U^{(1)}$.  A classical result from class
field theory \cite{La} p. 140, says that
\[ \Gal(\Omega/\KH_1) \cong U^{(1)}/\overline{E} . \] 
Since $(U^{(1)})^- \cap \overline{E} = \mu_{p}$, it follows that
$\Gal(\Omega^-/\KH_1^-) = (U^{(1)})^- \times \id{T}(U^-)/\mu_{p}$, where the
torsion part $\id{T}(U^-) = \prod_{\nu \in C} \mu_{p}$ is\footnote{We have
assumed for simplicity that $\K$ does not contain the \nth{p^2} roots of
unity. The construction can be easily generalized to the case when $\K$
contains the \nth{p^k} but not the \nth{p^{k+1}} roots of unity.} the product
of the images of the $p$ - th roots of unity in the single completions,
factored by the diagonal embedding of the global units.

For the proof, we need to verify that ranks are equal on both sides of
\rf{ommin}.  Let $\pi_{\nu} \in \K_n$ be a list of integers such that
$(\pi_{\nu}) = \wp^{\nu h}$ for $h$ the order of the class of $\wp^{\nu}$ in
the ideal class group $\id{C}(\K)$. Then we identify immediately $\T_1 =
\prod_{\nu \in \C} \K[ \pi_{\nu}^{1/p} ]$ as a $p$ - ramified extension with
group $\Gal(\T_1/\K) = \id{T}(U^-)/\mu_{p} \subset\Gal(\Omega^-/\KH^-)$.

A straight forward computation in the group ring yields that $T^* x \equiv 0
\bmod (\omega_n, p^n) \Lambda$ iff $x \in \nu^*_{n,1} \Lambda$. On the other
hand, suppose that $x \in \rad(\Omega^-/\K_n) \cap E_n$; note that here the
extensions can be defined as Kummer extensions of exact exponent $p^n$, so
there is no need of an index shift as in the case of the unramified extensions
treated above. This observation and Kummer theory imply that $x^{T^*} \in
E^{p^n}_n$, and thus $x \in \id{E}_n$. We denote as usual $\Omega_E = \cup_n
\K_n[ E_n\pn ]$. We found that $\cup_m \K_m[ \id{E}_m^{1/p^m} ] = \Omega^-
\cap \Omega_E$; by comparing ranks, we see that if $\Omega^- \neq \T_n \cdot
\KH_1 \cdot (\Omega^- \cap \Omega_E)$, then there is an extension $\Omega^-
\supset \Omega'' \supsetneq (\Omega^- \cap \Omega_E)$, such that
\[ \zprk(\Gal(\Omega''/\K_{\infty})) = r_2(\K) = \zprk(\Gal(\Omega^-
\cap \Omega_E)). \] Since $\Omega_E \subset \overline{\Omega}$, where
$\overline{\Omega}$ is the maximal $p$-abelian $p$-ramified extension of
$\K_{\infty}$, it follows that $\Gal((\Omega^- \cap \Omega_E)/\K_{\infty})$ is
a factor of $\Gal(\Omega^-/\K_{\infty})$ and also of
$\Gal(\Omega''/\K_{\infty})$.

The index $[ \Gal(\Omega'' : \K_{\infty}) : \Gal((\Omega^- \cap
  \Omega_E)/\K_{\infty}) ] < \infty$ and since $\Gal(\Omega''/\K_{\infty})$
is a free $\Z_p$ - module and thus has no finite compact subgroups, it follows
from infinite galois theory that $\Omega'' = \Omega^- \cap \Omega_E$,
which completes the proof.
\end{proof}
We note that for $\Omega_n \supset \K_n$, the maximal $p$-abelian $p$-ramified
extension of $\K_n$, the same arguments lead to a proof of
\begin{eqnarray}
\label{omen}
\Omega^-_n =  \cup_{m \geq n} \K_m\left[ E(\K_m)^{N_{m,n}^*/p^m}\right].
\end{eqnarray}
%The answer to the question raised in the previous section is:
%\begin{lemma}
%\label{isunit}
%Let $\K_T$ be defined as above and $\Omega_E = \cup_n \K_n[ E(\K_n)\pn
%]$. Then $\K_T \subset \tilde{\K} \cdot \Omega_E$.
%\end{lemma}
%\begin{proof}
%Assume the claim is false, and thus $\rho_n \not \in E(\K'_n)$, with $\rho_n$
%defined in Lemma \ref{at}. There is an integer $0 < j \leq m-1$ such that
%$\rho_n^{(T^*)^j} \in E(\K'_n)$ but $\rho_n^{(T^*)^{j-1}} \not \in E(\K'_n)$:
%the first property is fulfilled for $j = m-1$ but not by $j = 0$, so there is
%a minimal value of $j$ satisfying the condition. Let $\gamma_n =
%\rho_n^{(T^*)^{j-1}}$ and $e_n = \gamma^{T^*} \in E(\K'_n) \cdot
%(\K'_n)^{p^{n+z}}$.

%On the other hand, we let $Y = \Gal(\Omega_E/\K_{\infty})$ and $\Omega_T =
%\Omega_E^{Y/T^m Y}$. This is a galoius extenion of $\K_{\infty}$ with group
%$\sim (\Lambda/T^m \Lambda)^{r_2}$ and radical in the units. In particular,
%$\Omega^- \subset \Omega_T$ and thus there also is a sequence of units
%$\delta_n$ with $\delta_n^{T^*} = e_n$. But then $(\gamma_n/\delta_n)^{T^*} =
%e_n x_n^{p^{n+z}}$ and raising to $N_{n+z,1}^*$ we derive from $\omega_n^* \in
%(p^n, \omega_n)$ that 
%\begin{eqnarray*}
%(\gamma_n/\delta_n)^{p^n} 
%\end{eqnarray*}
%\end{proof}

\subsection{Construction of auxiliary extension and order reversal}
On minus parts we have $\zprk(\Omega^-/\K^-) = r_2 + 1$ and the rank
$\zprk(\Omega^-/\overline{\KH}^-_1) = r_2$ does not depend on
Leopoldt's conjecture. We let $G = \Gal(\KH_{\infty}/\K)$ and $X =
\varphi(\rg{A}) = \Gal(\KH_{\infty}/\K_{\infty})$, following the
notation in \cite{Wa}, Lemma 13.15. The commutator is $G' = T X$ and
the fixed field $\KL = \KH_{\infty}^{T X}$ is herewith the maximal
abelian extension of $\K$ contained in $\KH_{\infty}$. From the
definition of $\Omega$, it follows that $\KL = \Omega \cap
\KH_{\infty}$ (see also \cite{Iw}, p. 257). Consequently
$\Gal(\KL/\overline{\KH_1}) \cong X/T X$. Let $F(T) = T^m G(T)$ be the
annihilator polynomial of $p^M \rg{A}$, with $p^M$ an annihilator of
the $\Z_p$-torsion (finite and infinite) of $\rg{A}$. If
$\rg{A}^{\circ}$ is this $\Z_p$-torsion, then $\rg{A} \sim \rg{A}(T)
+ \rg{A}(G(T)) + \rg{A}^{\circ}$.

From the exact sequences 
\begin{eqnarray*}
\xymatrix{ 0  \ar@{->}[r] & K_1 \ar@{->}[r]\ar@{->}[d] & p^{\mu} \rg{A}^- \ar@{->}[r]\ar@{->}[d] & p^{\mu} \rg{A}^-(T) + \rg{A}^-(G)  \ar@{->}[r]\ar@{->}[d] & K_2 \ar@{->}[r]\ar@{->}[d] & 0 \\
0  \ar@{->}[r] & 0  \ar@{->}[r] & p^{M} \rg{A}^-  \ar@{->}[r] & p^M \rg{A}^-(T) \oplus \rg{A}^-(G)  \ar@{->}[r] & 0  \ar@{->}[r] & 0 
}
\end{eqnarray*}
in which $M \geq \mu$ is such that annihilates the finite kernel and cokernel
$K_1, K_2$ and the vertical arrows are multiplication by $p^{M-\mu}$, we see
that it is possible to construct a submodule of $\rg{A}^-$ which is a direct
sum of $G$ and $T$-parts. We may choose $M$ sufficiently large, so that the
following conditions also hold: $p^M \rg{A}^-(T)$ is a direct sum of cyclic
$\Lambda$-modules and if a prime above $p$ is inert in some
$\Z_p$-subextension of $\KH_{\infty}/\KH_{\infty}^{p^M \varphi(\rg{A}^-)}$,
then it is totally inert. Let $\tilde{\K} = \KH_{\infty}^{p^M
\varphi(\rg{A}^-)}$ for some $M$ large enough to verify all the above
conditions. Let $\K_T = \KH_{\infty}^{p^M \rg{A}^-(G)}$; by construction,
$\tilde{X}_T := \Gal(\K_T/\tilde{\K}) \sim \rg{A}^-(T)$ and it is a direct sum
of cyclic $\Lambda$-modules. Let $a_1, a_2, \ldots, a_t \in p^M \rg{A}^-(T) =
\varphi^{-1}(\tilde{X}_T)$ be such that
\begin{eqnarray}
\label{atdirsum}
\tilde{X}_T = \bigoplus_{j=1}^t \varphi\left(\Lambda a_i\right).
\end{eqnarray}
From the definition $\K_B^- = \Omega^- \cap \KH_{\infty} \subset \K_T$ and
Lemma \ref{t2} implies that $\Gal(\KH_{\infty}/\K_B^-) \sim \tilde{X}_T[ T
]$. Let now $a \in p^M \rg{A}^-(T) \setminus (p, T) p^M \rg{A}^-(T)$ - for
instance $a = a_1$ and let $\id{A} = \Lambda a$ while $\id{C} \subset p^M
\rg{A}^-(T)$ is a $\Lambda$-module with $\id{A} \oplus \id{C} = p^M
\rg{A}^-(T)$. We assume that $m = \ord_T(a)$ and let $b = T^{m-1} a \in
\rg{A}^-[ T ]$. We define $\K_a = \K_T^{\varphi(\id{C})}$, an extension with
$\Gal(\K_a/\tilde{\K}) \cong \id{A}$. At finite levels we let $\K_{a,n} :=
\K_a \cap \KH_n$ and let $z$ be a positive integer such that
$\K'_{a,n}:=\K_{n+z} \K_{a,n}$ is a Kummer extension of $\K'_n := \K_{n+z}$,
for all sufficiently large $n$ -- we may assume that $M$ is chosen such that
the condition $n > M$ suffices. The duals of the galois groups
$\varphi(\id{A}_n)$ are radicals $R_n = \Rad(\K_{a,n}/\tilde{\K})$, which are
cyclic $\Lambda$-modules too (see also the following section for a detailed
discussion of radicals), under the action of $\Lambda$, twisted by the
Iwasawa involution. We let $\rho_n \in R_n$ be generators which are dual to
$a_n$ and form a norm coherent sequence with respect to the $p$-map, as was
shown above, since $n > M > n_0$; by construction, $\rho_n^{p^{n+z}} \in
\K'_n$. We gather the details of this construction in
\begin{lemma}
\label{at}
Notations being like above, there is an integer $M > 0$, such that the
following hold:
\begin{itemize}
 \item[ 1. ] The extension $\tilde{\K} := \KH_{\infty}^{p^M X}$ has group
 $\tilde{X} := \Gal(\KH_{\infty}/\tilde{\K}) = \tilde{X}(T) \oplus
 \tilde{X}(G)$ below $\KH_{\infty}$.
\item[ 2. ] The extension $\K_T := \KH_{\infty}^{\tilde{X}(G)}$ has group
$\tilde{X}_T = \bigoplus_{i=1}^t \Lambda \varphi(a_i)$.
\item[ 3. ] For $a \in p^M \rg{A}^-(T) \setminus (p, T) p^M \rg{A}^-(T)$ we
define $\id{A} = \Lambda a$ and let $\id{C} \subset p^M \rg{A}^-(T)$ be a
direct complement. We define $\K_a = \KH_T^{\varphi(\id{C})}$, so
$\Gal(\K_a/\tilde{\K}) = \varphi(\id{A})$ and let $\K_{a,n} = \K_a \cap
\KH_n$.
\item[ 4. ] There is a positive integer $z$ such that for all $n > M$, 
\[ \K'_{a,n} = \K_{n+z} \cdot \K_{a,n} \subset \KH_{n+z} \]
 is a Kummer extension of $\K'_n := \K_{n+z}$.
\item[ 5. ] For $\K_B^- = \Omega^- \cap \KH_{\infty}$ we have $\K_B^- \subset
\K_T$ and $\Gal(\KH_{\infty}/\K_B^-) \sim \tilde{X}_T[ T ]$.
\item[ 6. ] The radical $R_n = \Rad(\K_{a,n}/\tilde{\K}) \cong
\id{A}_n^{\bullet}$ and we let $\rho_n \in R_n$ generate this radical as a
$\Lambda^*$-cyclic module, so that $\rho_n^{(T^*)^i}, i = 0, 1, \ldots, m-1$
form a dual base to the base $a_n^{T^i}, i = 0, 1, \ldots, m-1$ of
$\id{A}_n$. We have $\rho_n^{p^{n+z}} \in \K'_n$.
\end{itemize}
\end{lemma}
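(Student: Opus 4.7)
The statement is essentially a summary of the construction carried out in the paragraphs preceding it, so the plan is to collect and verify the six points one at a time, showing that a single large $M$ can be chosen so that all claims hold simultaneously. The starting point is the diagram displayed above, which exhibits a $p^{M-\mu}$-multiplication map from $p^\mu\rg{A}^-$ into $p^M\rg{A}^-(T)\oplus\rg{A}^-(G)$ whose kernel and cokernel $K_1,K_2$ are finite and annihilated by $p^M$ for $M$ sufficiently large. Choosing $M$ above this bound and above $\mu$, one obtains the decomposition $p^M\rg{A}^-=p^M\rg{A}^-(T)\oplus\rg{A}^-(G)$, which upon passing to $\varphi$ gives $\tilde{X}=\tilde{X}(T)\oplus\tilde{X}(G)$ and yields point 1. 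One enlarges $M$ further if needed so that $p^M\rg{A}^-(T)$ decomposes as a direct sum $\bigoplus_{i=1}^t\Lambda a_i$ of cyclic $\Lambda$-modules (use the standard structure theorem for finitely generated torsion $\Lambda$-modules, applied to the $T$-part), and so that the inertia condition on inert primes holds. Point 2 is then the translation of this direct sum via the surjection $\varphi$ and the identification $\K_T=\KH_\infty^{\tilde{X}(G)}$.

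Point 3 is essentially a definition once 1 and 2 are in place: since $p^M\rg{A}^-(T)$ is a direct sum of cyclic $\Lambda$-modules and contains $\Lambda a$ as a summand (because $a\notin(p,T)p^M\rg{A}^-(T)$ guarantees that $a$ is part of a minimal $\Lambda$-generating set, by Nakayama's Lemma \ref{naka}), a complement $\id{C}$ exists, and the fixed field $\K_a=\K_T^{\varphi(\id{C})}$ has the asserted galois group. For point 4, I would apply Lemma \ref{Gabriele}: since $a\in\rg{A}^-$ is of infinite order (it is non-torsion by choice, being outside $(p,T)p^M\rg{A}^-(T)$ and the $\Z_p$-torsion was already factored out by the choice of $M$), there is a single integer $z=z(a)$ such that $\ord(a_n)=p^{n+z}$ for all $n\geq n_0$. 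Enlarging $M$ to exceed $n_0$, this ensures that the Kummer extension $\K'_{a,n}/\K'_n$ has exponent dividing $p^{n+z}$ and hence is a genuine Kummer extension once $\K'_n=\K_{n+z}$ has been adjoined.

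For point 5, the inclusion $\K_B^-\subset\K_T$ follows from the fact that $\Omega^-\cap\KH_\infty$ is an abelian extension of $\K_\infty$ whose galois group over $\K_\infty$ is a quotient of $\rg{A}^-$ annihilated (after passage to $p^M$) by $T$, so its preimage in $\varphi(\rg{A}^-)$ lies in $\rg{A}^-(T)$, hence in the subfield cut out by the $G$-part. The structural statement $\Gal(\KH_\infty/\K_B^-)\sim\tilde{X}_T[T]$ then follows from Lemma \ref{t2}, which ensures $\ord_T\geq 2$ when a nontrivial piece survives and is precisely the statement that the $T$-socle captures the relevant subgroup. I expect this to be the most delicate step, because one must carefully separate the contribution of the ramification module $\rg{B}$ from that of the regular $T$-torsion and invoke Lemma \ref{t2} to rule out a spurious lift that would destroy the identification.

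Finally, point 6 is pure Kummer duality: by the discussion preceding Lemma \ref{rads}, applied to the galois group $\varphi(\id{A}_n)\cong\id{A}_n$, the radical $R_n=\Rad(\K_{a,n}/\tilde\K)$ is identified with $\id{A}_n^\bullet$ as a $\Lambda$-module under the Iwasawa involution, and the existence of norm-coherent generators $\rho_n$ dual to the $\Z_p[T]$-base $a_n^{T^i}$ follows from Lemma \ref{rads}. The inclusion $\rho_n^{p^{n+z}}\in\K'_n$ is precisely the condition that $R_n^{p^{n+z}}\subset\K_n^{\times}$ built into the definition of the cogalois radical in \rf{cog}, granted the order bound $\ord(a_n)\leq p^{n+z}$ from point 4. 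Having verified each point with a common choice of $M$, the lemma is established.
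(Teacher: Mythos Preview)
Your proposal is correct and mirrors the paper's own treatment: the lemma is explicitly introduced with ``We gather the details of this construction in'', and indeed the paper gives no separate proof environment, having already carried out each step in the paragraphs immediately preceding the statement. Your point-by-point verification---the exact-sequence diagram for the direct-sum splitting in 1, the structure theorem for the cyclic decomposition in 2, Nakayama for the complement in 3, Lemma~\ref{Gabriele} for the constant $z$ in 4, Lemma~\ref{t2} for point 5, and Lemma~\ref{rads} with Kummer duality for point 6---follows the paper's construction essentially line for line, including the remark that a single $M$ can be chosen large enough to satisfy all conditions at once.
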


We may apply the order reversal to the finite Kummer extensions
$\K_{a,n}/\tilde{\K}_n$ defined in Lemma \ref{at}. In the notation of this
lemma, we assume that $m = \ord_T(a) > 1$. We deduce from \ref{ordrevers} that 
\begin{eqnarray}
\label{eplicitrev}
\lan \varphi(a_n)^{T^i}, \rho_n^{(T^*)^{m-1-i}} \ran_{\K_{a,n}/\tilde{\K}} =
  \zeta_{p^v}, \\
& & \nonumber \quad v \geq n + z - M, \ i = 0, 1, \ldots, m-1 .
\end{eqnarray}
This fact is a direct consequence of \rf{ordrevers} for $i=0$ and it follows
by induction on $i$, using the following fact. Let $\F_i = \tilde{\K}[
\rho_n^{(T^*)^{m-1-i}} ]$; then $\overline{\F}_i = \prod_{j=0}^i \F_i$ are
galois extensions of $\tilde{\K}_1$ and in particular their galois groups are
$\Lambda$-modules. In particular, $\Gal(\F_{m-1}/\tilde{\K}) \cong
\id{A}_n^{T^{m-1}} = \id{A}_n[ T ]$. From Lemma \ref{t2} we know that
$\id{A}_n[ T ] \subset \rg{B}_n^-$, so at least one prime $\eu{p} \subset
\tilde{\K}$ above $p$ is inert in $\F_{m-1}$, and the choice of $M$ in Lemma
\ref{at} implies that it is totally inert in $\F_{m-1}/\tilde{\K}_n$. Let $\wp
\subset \K$ be a prime below $\eu{p}$. It follows in addition
$\overline{\F}_{m-2} \subset \KH'_{\infty} \cdot \tilde{\K}$ and all the
primes above $p$ are split in $\overline{\F}_{m-2}$: this is because
\[ \Gal(\overline{\F}_{m-2}/\tilde{\K}) \cong \id{A}_n/\id{A}_n[ T ] = 
\id{A}_n/(\id{A}_n \cap \rg{B}_n) \subset A'_n . \]

Let now $\K_{a}$ be like above and $\K_b = \Omega^- \cap \K_a$, so
$\K_b/\tilde{\K}$ is a $\Z_p$-extension. Moreover, we assume that $\K_b \not
\subset \KH'_{\infty}$, so not all primes above $p$ are totally split. By
choice of $M$, we may assume that there is at least on prime $\wp \subset \K$
above $p$, such that the primes $\tilde{K} \supset \eu{p} \supset \wp$ are
inert in $\K_b$. By the construction of $\Omega^-$ in the previous section, we
have $T^* \Rad(\K_b/\tilde{\K}) = 0$. The order reversal lemma implies then
that $\Gal(\K_b/\tilde{\K}) \cong \id{A}/(T \id{A})$. Assuming now that $m =
\ord_T(a) \geq 1$, the Lemma \ref{t2} implies that $T^{m-1} a \in \rg{B}^-$
and the subextension of $\K_a$ which does not split all the primes above $p$
is the fixed field of $T^{m-1} \id{A}$; but then order reversal requires that
$\Rad(\K_b/\tilde{\K})$ is cyclic, generated by $\rho$, which is at the same
time a generator of $\Rad(\K_a/\tilde{\K})$ as a $\Lambda$-module. Since we
have seen that $T^* \Rad(\K_b/\tilde{\K}) = 0$, we conclude that $T^*
\Rad(\K_a/\tilde{\K}) = 0$, and by duality, $T a = 0$. This holds for all $a
\in p^M \rg{A}^-(T)$, so we have proved:
\begin{lemma}
\label{gaussum}
Let $\KH_B^- = \Omega^- \cap \KH_{\infty}$. If $[ \K_B^- \cap \KH'_{\infty} :
\K_{\infty} ] < \infty$, then $\rg{A}^-(T) = \rg{B}^-$. 
\end{lemma}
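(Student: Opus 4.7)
The plan is to show that the argument sketched in the discussion preceding the lemma statement can be packaged as a proof by contradiction. Specifically, I would establish that every element $a$ of $p^M \rg{A}^-(T)$ satisfies $Ta=0$, and then use the standing assumption $\mu(\rg{A}^-)=0$ to promote this to $\rg{A}^-(T) \subseteq \rg{A}^-[T]$. Combined with Lemma \ref{t2}, which forces $(\rg{A}')^-(T)=0$ as soon as the $T$-power torsion reduces to ordinary $T$-torsion, this produces $\rg{A}^-(T)=\rg{B}^-$.

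Concretely, I fix $M$ as in Lemma \ref{at}, so that $p^M \rg{A}^-(T)$ decomposes as a direct sum of cyclic $\Lambda$-modules $\Lambda a_i$ and every prime above $p$ that is not totally split in a relevant intermediate extension is in fact totally inert. For a generator $a = a_i$ with $m = \ord_T(a)$, suppose for contradiction $m \geq 2$. I form the extensions $\K_a$ and $\K_b := \Omega^- \cap \K_a$ of the preceding discussion; then $\K_b/\tilde{\K}$ is a $\Z_p$-extension contained in $\K_B^-$, so the hypothesis $[\K_B^- \cap \KH'_\infty : \K_\infty] < \infty$ rules out $\K_b \subseteq \KH'_\infty \cdot \tilde{\K}$ on degree grounds, and hence some prime above $p$ is totally inert in $\K_b$. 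Lemma \ref{r2} yields $T^*\Rad(\K_b/\tilde{\K}) = 0$, and the order-reversal Lemma \ref{revers}, together with the fact that a $\Lambda$-generator of the cyclic radical $\Rad(\K_a/\tilde{\K})$ projects to a generator of $\Rad(\K_b/\tilde{\K})$, lifts this to $T^*\Rad(\K_a/\tilde{\K})=0$. Kummer duality then gives $Ta=0$, contradicting $m \geq 2$.

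Hence $p^M \rg{A}^-(T) \subseteq \rg{A}^-[T]$; the $\Z_p$-torsion-freeness of $\rg{A}^-$ (a consequence of $\mu=0$) upgrades this to $\rg{A}^-(T) \subseteq \rg{A}^-[T]$, and since the reverse inclusion is trivial, $\rg{A}^-(T)=\rg{A}^-[T]$. Applying the contrapositive of Lemma \ref{t2} then gives $(\rg{A}')^-(T)=0$, i.e.\ $\rg{A}^-[T] \subseteq \rg{B}^-$, and together with the obvious $\rg{B}^- \subseteq \rg{A}^-[T]$ we conclude $\rg{A}^-(T)=\rg{B}^-$. The main obstacle in implementing this plan rigorously is the propagation step in the second paragraph: passing from the annihilation of $T^*$ on the small sub-radical $\Rad(\K_b/\tilde{\K})$ to its annihilation on the full cyclic radical $\Rad(\K_a/\tilde{\K})$ requires careful use of the Kummer pairing identity \rf{eplicitrev}, the cyclicity of $\Rad(\K_a/\tilde{\K})$ as a $\Lambda$-module, and the total-inertia hypothesis secured by the choice of $M$. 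The remaining steps are essentially bookkeeping.
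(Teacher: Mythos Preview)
Your proposal is correct and follows essentially the same route as the paper: the argument immediately preceding the lemma statement in \S4.3 is exactly the contradiction you describe, pivoting on the order-reversal identity \rf{eplicitrev} to propagate $T^*$-annihilation from $\Rad(\K_b/\tilde{\K})$ to the full $\Rad(\K_a/\tilde{\K})$. Your write-up is in fact slightly more explicit than the paper's at the endgame, spelling out the upgrade from $p^M\rg{A}^-(T)\subseteq\rg{A}^-[T]$ to $\rg{A}^-(T)=\rg{A}^-[T]$ via $\Z_p$-torsion-freeness and then invoking the contrapositive of Lemma~\ref{t2}, steps the paper leaves implicit.
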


\subsection{The contribution of class field theory}
We need to develop more details from local class field theory in order to
understand the extension $\KH_B^- = \Omega^- \cap \KH_{\infty}$. This is an
unramified extension of $\K_{\infty}$ which is abelian over $\KH_1$. We wish
to determine the $\Z_p$-rank of this group and decide whether the extensions
in $\KH_B^-$ split the primes above $p$ or not.

Let $\wp \subset \K$ be a prime over $p$ and $\wp^+ \subset \K^+$ be the real
prime below it. If $\wp^+$ is not split in $\K/\K^+$, then $\rg{B}^- = \{ 1
\}$ and it is also known that $(\rg{A}')^-(T) = \{ 1 \}$ in this case -- this
follows also from the Lemma \ref{t2}. The case of interest is thus when
$\wp$ is split in $\K/\K^+$. Let $D(\wp) \subset \Delta$ and $C, s$ be defined
like above and let $\eu{p} \subset \Omega$ be a prime above $\wp$. 

Local class theory provides the isomorphism $\Gal(\Omega/\KH_1) \cong
U^{(1)}/\overline{E}$ via the global Artin symbol (e.g. \cite{La}, ).  We
have the canonic, continuous embedding
\[ \K \hookrightarrow \K \otimes_{\Q} \Q_p \cong \prod_{\nu \in C} \K_{\nu
  \wp},  \] 
and $U^{(1)} = \prod_{\nu \in C} U^{(1)}_{\nu \wp}$, where $U^{(1)}_{\eu{p}}$
are the one-units in the completion at the prime $\eu{p}$. The ring $U^{(1)}$
is a galois algebra and $\Delta = \Gal(\K/\Q) \hookrightarrow
\Gal(U^{(1)}/\Q_p)$. Thus complex conjugation acts on $U^{(1)}$ via the
embedding of $\K$ and if $u \in U^{(1)}$ has $\iota_{\wp}(u) = x,
\iota_{\overline{\wp}}(u) = y$, then $\jmath u$ verifies
\[ \iota_{\wp}(\jmath u) =  \overline{y},  \quad \iota_{\overline{\wp}}(\jmath
u) = \overline{x}. \]
Moreover, $u \in U^-$ iff $u = v^{1-\jmath}, v \in U$. Thus, if $\iota_{\wp}(v)
= v_1$ and $\iota_{\overline{\wp}}(v) = v_2$, then 
\begin{eqnarray}
\label{comcon}
\iota_{\wp}(u)  =  v_1/\overline{v_2}, \quad
\iota_{\overline{\wp}}(u) =  v_2/\overline{v_1} = 1/\overline{\iota_{\wp}(u)}.
\end{eqnarray}
One can analyze $U^+$ in a similar way. Note that $\Z_p$ embeds diagonally in
$U^+$; this is the preimage of $\Gal(\K_{\infty}/\K)$, under the global Artin
symbol. 

Next we shall construct by means of the Artin map some subextension of
$\Omega^-$ which are defined uniquely by some pair of complex conjugate primes
$\wp, \overline{\wp} \supset (p)$ and intersect $\KH_{\infty}$ is a
$\Z_p$-extension. Since $U^{(1)}$ is an algebra, there exists for each pair of
conjugate primes $\wp, \overline{\wp}$ with fixed primes $\eu{P}, \eu{P}^{\tj}
\subset \Omega$ above $(\wp, \overline{\wp})$, a subalgebra
\begin{eqnarray}
\label{vdef}
 V_{\wp} = \{ u \in U^{(1)} \ : \
\iota_{\eu{P}}(u) = 1/\overline{\iota_{\eu{P}^{\jmath}} (u)}; \iota_{\nu \wp}
= 1, \ \forall \nu \in C \setminus \{1, \jmath\} \} .
\end{eqnarray}
Accordingly, there is an extension $\M_{\wp} \subset \Omega^-$ such that 
\[ \varphi^{-1} ( \Gal(\M_{\wp}/\K_{\infty}) ) = V_{\wp} . \]
By construction, all the primes above $p$ above $\wp, \overline{\wp}$ are
totally split in $\M_{\wp}$. Since $\Gal(\K_{\wp}/\Q_p) = D(\wp)$ and
$U_{\wp}$ is a pseudocyclic $\Z_p$-module, pseudoisomorphic to $\Z_p[ D(\wp)
]$ (e.g. \cite{La}, p. 140-41), it follows that there is exactly one
$\Z_p$-subextension $\U_{\wp} \subset \M_{\wp}$ with galois group fixed by the
augmentation of $D(\wp)$. Since the augmentation and the norm yield a direct
sum decomposition of $\Z_p[ D(\wp) ]$, this extension and its galois group are
canonic -- up to possible finite quotients. Locally, the completion of
$\rg{U}/\Q_p$ of $\U_{\wp}$ at the primes above $\wp$ is a $\Z_p$-extension of
$\Q_p$, since its galois group is fixed $D(\wp)$. It follows by a usual
argument that $\U_{\wp}/\K_{\infty}$ is unramified at all primes above $p$, so
$U_{\wp} \subset \KH$. One has by construction that $\U_{\wp}^- \subset
\Omega^-$, so we have proved:
\begin{lemma}
\label{atwp}
Let $\K$ be a CM extension like above and assume that the primes $\wp^+
\subset \K^+$ split in $\K/\K^+$. For each prime $\wp \subset \K$ there is a
canonic (up to finite subextensions) $\Z_p$-extension $\U_{\wp} \subset
\Omega^- \cap \KH_{\infty}$ such that $\Gal(\U_{\wp}/\K_{\infty}) =
\varphi\left(V_{\wp}^{\eu{A}(\Z_p[ D_{\wp} ])}\right)$, where $\eu{A}(\Z_p[
D_{\wp} ]$ is the augmentation ideal of this group ring and $V_{\wp}$ is
defined by \rf{vdef}. In particular, $\Omega^-$ contains exactly $s' = | C |/2$
unramified extensions.
\end{lemma}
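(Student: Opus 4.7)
The plan is to exploit the global Artin isomorphism $\Gal(\Omega/\KH_1) \cong U^{(1)}/\overline{E}$ from class field theory, recalled in the proof of Lemma \ref{r2}, and to extract $\U_\wp$ from the canonical $\Z_p[D(\wp)]$-module decomposition of $V_\wp$.

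First I would fix $\wp \subset \K$ above $p$ with $\wp^+$ split in $\K/\K^+$, together with primes $\eu{P}, \eu{P}^{\jmath} \subset \Omega$ above $\wp, \overline{\wp}$. The relations \rf{comcon} place $V_\wp$ inside $(U^{(1)})^-$ and let us identify $V_\wp$ canonically with the local one-unit group $U^{(1)}_\wp$ via $\iota_{\eu{P}}$, in such a way that the conjugation action of $D(\wp) \subset \Delta$ on $V_\wp$ coincides with the natural $\Gal(\K_\wp/\Q_p)$-action on $U^{(1)}_\wp$. As noted in the paragraph preceding the lemma, $U^{(1)}_\wp$ is pseudoisomorphic to $\Z_p[D(\wp)]$ as a $\Z_p[D(\wp)]$-module.

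Next I would use the canonical pseudo-splitting $\Z_p[D(\wp)] \sim \eu{A}(\Z_p[D_\wp]) \oplus \Z_p \cdot N_{D_\wp}$ induced by the augmentation and norm idempotents. Applied to $V_\wp$, it yields a $\Z_p$-quotient $V_\wp / V_\wp^{\eu{A}}$ of rank one on which $D(\wp)$ acts trivially; I would take $\U_\wp \subset \M_\wp$ to be the subextension whose Galois group over $\K_\infty$ is cut out by $V_\wp^{\eu{A}(\Z_p[D_\wp])}$. To see $\U_\wp \subset \KH_\infty$ I would argue locally: since $V_\wp$ has trivial support away from $\wp,\overline{\wp}$, the extension $\M_\wp$ (and hence $\U_\wp$) is already unramified at every other prime above $p$. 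At $\wp$ itself, local class field theory identifies the inertia of $\M_\wp/\K_\infty$ with the image of $U^{(1)}_\wp$ under the reciprocity map, and passage to the $D(\wp)$-coinvariants $V_\wp/V_\wp^{\eu{A}}$ kills precisely this inertia, leaving only the Frobenius (unramified) $\Z_p$-direction; by complex conjugation the same holds at $\overline{\wp}$. Hence $\U_\wp \subset \Omega^- \cap \KH_\infty$ as required.

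For the count, complex conjugation acts freely on the set of primes of $\K$ lying above split primes of $\K^+$, partitioning it into $|C|/2 = s'$ conjugate pairs, and the subalgebras $V_\wp$ attached to distinct pairs have disjoint support in $U^{(1)}$, so the corresponding $\U_\wp$ are linearly disjoint over $\K_\infty$. The main obstacle is the parenthetical \emph{canonic up to finite subextensions} clause: both the pseudoisomorphism $U^{(1)}_\wp \sim \Z_p[D(\wp)]$ and the augmentation-norm splitting carry finite kernels and cokernels, and one must further show that the image of $V_\wp \cap \overline{E}$ in $V_\wp / V_\wp^{\eu{A}}$ is finite, so that the coinvariants retain $\Z_p$-rank exactly one after passage through $U^{(1)}/\overline{E}$. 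This reduces to a Dirichlet-style rank calculation for the intersection of $E \otimes \Z_p$ with the antidiagonal locus at $(\wp,\overline{\wp})$, of the same character as the rank balancing carried out in the proof of Lemma \ref{r2}.
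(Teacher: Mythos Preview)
Your overall strategy matches the paper's: identify $V_\wp \cong U^{(1)}_\wp$, apply the augmentation/norm pseudo-decomposition of $\Z_p[D(\wp)]$, and take the rank-one $D(\wp)$-fixed piece. The discrepancy is in how you justify that $\U_\wp/\K_\infty$ is unramified at $\wp$.

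Your sentence ``passage to the $D(\wp)$-coinvariants $V_\wp/V_\wp^{\eu{A}}$ kills precisely this inertia, leaving only the Frobenius (unramified) $\Z_p$-direction'' does not stand on its own. The elements of $V_\wp$ are local \emph{units}; their norms down to $\Q_p$ are still units, and under local reciprocity units map to inertia, not to Frobenius. So taking $D(\wp)$-coinvariants does not \emph{a priori} push you into an unramified direction. The paper's argument is different and is the one you need: since $D(\wp)$ acts trivially on $\Gal(\U_\wp/\K_\infty)$, the completion of $\U_\wp$ at $\wp$ descends to a $\Z_p$-extension of $\Q_p$. Now $\Q_p$ has exactly two independent $\Z_p$-extensions, the cyclotomic one and the unramified one; the cyclotomic one is already contained in $\K_\infty$, so the remaining $\Z_p$-direction over $(\K_\infty)_\wp$ is forced to be unramified. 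This is precisely Greenberg's observation that the paper recalls in the imaginary-quadratic example immediately following the lemma, and it is the ``usual argument'' the paper invokes.

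One further remark: the ``main obstacle'' you flag at the end is not actually an obstacle. Since $V_\wp \subset (U^{(1)})^-$ and the paper has already recorded $(U^{(1)})^- \cap \overline{E} = \mu_p$ in the proof of Lemma~\ref{r2}, the intersection $V_\wp \cap \overline{E}$ is automatically finite and no Dirichlet-style rank computation is required.
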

Our initial question boils down to the following: is $\U_{\wp} \subset
\KH'_{\infty}$?

The following example perfectly illustrates the question: 
\begin{example}
  Let $\K/\Q$ be an imaginary quadratic extension of $\Q$ in which $p$ is
  split. Then $U^{(1)}(\K) = (\Z_p^{(1)})^2$ and $\Omega = \K_{\infty} \cdot
  \Omega^-$ is the product of two $\Z_p$-cyclotomic extensions; we may assume
  that $\KH_1 = \K$, so $\Gal(\Omega/\K) = \varphi(U^{(1)}(\K))$. One may take
  the second $\Z_p$-extension in $\Omega$ also as being the anticyclotomic
  extension. In analyzing a similar example, Greenberg makes in \cite{Gr} the
  following simple observation: since $\Q_p$ has only two $\Z_p$-extensions
  and $\K_{\infty}$ contains the cyclotomic ramified one, it remains that,
  locally $\Omega^-/\K_{\infty}$ is either trivial or an unramified
  $\Z_p$-extension. In both cases, $\Omega^- \subset \KH_{\infty}$ is a
  global, totally unramified $\Z_p$-extension -- we have used the same
  argument above in showing that $\U_{\wp}/\K_{\infty}$ is unramified. The
  remark settles the question of ramification, but does not address the
  question of our concern, namely splitting. However, in this case we know
  more. In the paper \cite{Gr1} published by Greenberg in the same year, he
  proves that for abelian extensions of $\Q$, thus in particular for quadratic
  ones, $(\rg{A}')^-(T) = \{ 1 \}$. Therefore in this example, $\Omega^-$
  cannot possibly split the primes above $p$.

How can this fact be explained by class field theory? 
% Let $U_p := \overline{\Z_p^{(1)}} \subset U^{(1)}$ be the diagonal embedding one-units of $\Q_p$. It is a real group and $\Gal(\K_{\infty}/\K) = \varphi(U_p) \vert_{\K_{\infty}}$ its restriction (via global Artin map) to the $\Z_p$-extension $\K_{\infty}$. Thus $\Omega^-$ is an extension that does not contain the cyclotomic one: writing $\Omega^-/\K_{\infty}$ is just a short hand for $(\Omega^- \cdot \K_{\infty})/\K_{\infty}$. In our simple example, this is just $\Omega/\K_{\infty}$.
\end{example}
We give here a proof of Greenberg's theorem \cite{Gr1} for imaginary quadratic
extensions, and thus an answer to the question raised in the last example; we
use the notations introduced there:
\begin{proof}
We shall write $\KL = \K_{\infty} \cdot \KH_1$; we have seen
above that $\Omega/\KL$ must be an unramified extension. Let $\eu{P} \in \Omega$ be a prime above $\wp$, let $\tj \in \Gal(\Omega/\KH_1)$ be a lift of complex
conjugation and let $\tau \in \Gal(\Omega/\KH_1)$ be a generator of the
inertia group $I(\eu{P})$: since $\Omega_{\eu{P}}/\K_{\wp}$ is a product of $\Z_p$-extensions of $\Q_p$ and $\Q_p$ has no two independent ramified $\Z_p$-extensions, it follows that $I(\eu{P}) \cong \Z_p$ is cyclic, so $\tau$ can be chosen as a topological generator. Then $\tau^{\jmath} = \jmath \cdot \tau
\cdot \jmath$ generates $I(\eu{P}^{\tj}) \cong \Z_p$. 
Iwasawa's argument used in the
proof of Thereom \ref{iw6} holds also for $\Omega/\KH_1$: there is a
class $a \in A_n$ with $\tau^{\jmath} = \tau \varphi(a)$, where the Artin
symbol refers to the unramified extension $\Omega/\KL$. Thus
\[ \jmath \cdot \tau \cdot \jmath \cdot \tau^{-1} = \tau^{\jmath-1} = \varphi(a). \]
The inertia groups $I(\eu{P}) \neq I(\eu{P}^{\tj})$: otherwise, their common fixed field would be an unramified $\Z_p$-extension of the finite galois field $\KH_1/\Q$, which is impossible: thus $\tau^{\jmath-1} = \varphi(a) \neq 1$ generates a group isomorphic to $\Z_p$. Let now $\eu{p} =
\eu{P} \cap \KL$; the primes $\eu{p}, \eu{p}^{\tj}$ are unramified in
$\Omega_n/\KL$, so $\tau$ restricts to an Artin symbol in this
extension. The previous identity implies
\[ \lchooses{\Omega/\KL}{a} =
\lchooses{\Omega/\KL}{\eu{p}^{\jmath-1}} ; \]
Since the Artin symbol is a class symbol, we conclude that the primes in the
coherent sequence of classes $b = [ \eu{p}^{\jmath-1} ] \in \rg{B}^-$ generate $\Gal(\Omega/\Omega^{\varphi(a)})$
and $a = b$, which completes the proof.
\end{proof}

% %  It is useful for the description of $U_{\wp}^{(1)}$ to recall
% % the following result of Serre (\cite{Se}): there is a $\xi \in U_{\wp}^{(1)}$
% % with $\left[ U_{\wp}^{(1)} : \xi^{\Z_p[ D(\wp) ]} \right] < \infty$: we denote
% % such units by \textit{local Minkowski units}. Note that
% % $\rg{M}/\K_{\wp}^{(c)}$ is a product of $d$ independent $\Z_p$-extensions,
% % including the unramified one. The galois group of the ramified extensions
% % $\Gal(\rg{M}^{(r)}/\K_{\wp}^{(c)}) = \varphi(U'_{\wp})$ with $U'_{\wp} =
% % \Ker(N : U_{\wp}^{(1)} \ra \Z_p)$.
% % 

\subsection{Proof of Theorems \ref{simram} and \ref{gross}}
We can turn the discussion of the example above into a proof of
Theorem \ref{simram} with its consequence, the Corollary \ref{gross}.
The proof generalizes the one given above for imaginary quadratic extensions,
by using the construction of the extensions $\U_{\wp}$ defined above. 
\begin{proof}
Let $\KL = \KH_1 \cdot \K_{\infty}$, like in the previous proof. Let $\wp \subset
\K$ be a prime above $p$ and $\U$ be the maximal unramified extension of $\KL$ contained in $\U_{\wp}$, the extension defined in Lemma \ref{atwp}, and let $\tj$ be a lift of complex conjugation to $\Gal(\U/\Q)$. Since $\Omega/\KH_1$ is abelian, the
extension $\U/\KH_1$ is also galois and abelian.

Let $\eu{P} \subset \U$ be a fixed prime above
$\wp$ and $\tj \in \Gal(\U/\KH_1)$ be a lift of complex conjugation. Consider
the inertia groups $I(\eu{P}), I(\eu{P}^{\tj}) \subset \Gal(\U/\KH_1)$ be the inertia groups of the two conjugate primes. Like in the example above, $\Gal(\U/\KH_1) \cong \Z_p^2$ and $\U_{\eu{P}}/\K_{\wp}$ is a product of at most two $\Z_p$ extensions of $\Q_p$. It follows that the inertia groups are isomorphic to $\Z_p$ and disinct: otherwise, there commone fixed field in $\U$ would be an uramified $\Z_p$-extension of $\KH_1$.

For $\nu \in C \setminus \{1, \jmath\}$, the primes above $\nu \wp$ are
totally split in $\U_{\wp}/\K_{\infty}$, so a fortiori in $\U$. 
Let $\tt \in \Gal(\U/\KH_1)$ generate the inertia group $I(\eu{P})$;
  then $\tt^{\tj} \in \Gal(\U/\KH_1)$ is a generator of $I(\eu{P}^{\tj})$. Since $\U/\KL$ is an unramified extension,
  there is an $a \in \rg{A}^-$ such that
  \[ \tt^{\jmath} = \jmath \tt \jmath = \lchooses{\U/\KL}{a} \cdot \tt .\] 
 Thus 
\begin{eqnarray}
\label{commut}
\varphi(a) = \jmath \tt \jmath \tt^{-1}.
\end{eqnarray}
Like in the previous proof, we let $\eu{p} = \eu{P} \cap \KL$ and note
that since $\eu{p}$ does not ramify in $\U/\KL$, the automorphism $\tt$
acts like the Artin symbol $\lchooses{\U/\KL}{\eu{p}}$. The relation
\rf{commut} implies:
\[ \lchooses{\U/\KL}{ a } = \lchooses{\U/\KL}{\eu{p}^{\jmath-1}}. \]
In particular, the primes in the coherent sequence of classes $b = [ \eu{p}^{\jmath-1} ] \in \rg{B}^-$ generate $\Gal(\U/\KL)$ and $\U$ does 
not split all the primes above $p$. This happens for all $\wp$ and by Lemma \ref{atwp} we have $\KH_B^-
= \prod_{\nu \in C/\{1,\jmath\}} U_{\nu \wp}$, so it is spanned by
$\Z_p$-extensions that do not split the primes above $p$ and consequently
\[ [ \KH'_{\infty} \cap \KH_B ] < \infty . \]
We may now apply Lemma \ref{gaussum} which implies that $\rg{A}^-(T) =
\rg{B}^-$. This completes the proof of Theorem \ref{simram}. The corollary
\ref{gross} is a direct consequence: since $\rg{A}^-(T) = \rg{A}^-[ T ] =
\rg{B}^-$, it follows directly from the definitions that $(\rg{A}')^-(T) = \{
1 \}$.
\end{proof}

\begin{remark}
\label{mu}
The above proof is intimately related to the case when $\K$ is CM and
$\K_{\infty}$ is the $\Z_p$-cyclotomic extension of $\K$. The methods cannot
be extended without additional ingredients to non CM fields, and certainly not
other $\Z_p$-extensions than the cyclotomic. In fact, Carroll and Kisilevsky
have given in \cite{CK} examples of $\Z_p$-extensions in which $\rg{A}'(T)
\neq \{ 1 \}$.

A useful consequence of the Theorem \ref{simram} is the fact that the
$\Z_p$-torsion of $X/ TX$ is finite. As a consequence, if $M = \rg{A}[
p^{\mu} ], \mu= \mu(\K)$, then $Y_1 \cap M^- \subset T X$. In
particular, if $a \in M^-$ has $a_1 = 1$, then $a \in T M^-$. We shall
give in a separate paper a proof of $\mu = 0$ for CM extensions, which
is based upon this remark. Note that the finite torsion of $X / T X $
is responsible for phenomena such as the one presented in the example
5. above.
\end{remark}

\section{Conclusions}
Iwasawa's Theorem 6 reveals distinctive properties of the main module $\rg{A}$
of Iwasawa Theory, and these are properties that are not shared by general
Noeterian $\Lambda$-torsion modules, although these are sometimes also called
``Iwasawa modules''. In this paper we have investigated some consequences of
this theorem in two directions. The first was motivated by previous results of
Fukuda: it is to be expected that the growth of specific cyclic
$\Lambda$-submodules which preserve the overall properties of $\rg{A}$ in
Iwasawa's Theorem, at a cyclic scale, will be constrained by some
obstructions. Our analysis has revealed some interesting phenomena, such as
\begin{itemize}
\item[1.] The growth in rank of the modules $\id{A}_n$ stops as soon as this
  rank is not maximal (i.e., in our case, $p^{n-1}$ for some $n$.
\item[2.] The growth in the exponent can occur at most twice before rank
  stabilization.
\item[3.] The most \textit{generous} rank increase is possible for regular
  flat module, when all the group $\id{A}_n$ have a fixed exponent and
  subexponent, until rank stabilization, and the exponent is already
  determined by $\id{A}_n$. It is an interesting fact that we did not
  encounter any example of such modules in the lists of Ernvall and
  Mets\"ankil\"a.
\end{itemize}
Although these obstruction are quite strong, there is no direct upper bound
either on ranks or on exponents that could be derived from these analysis.

Turning to infinite modules, we have analyzed in Chapter 4 the structure of
the complement of $T X$ in Iwasawa's module $Y_1^-$ in the case of CM
extentions. This was revealed to be $\rg{B}^-$, a fact which confirms the
conjecture of Gross-Kuz'min in this case. 

The methods introduced here suggest the interest in pursuing the investigation
of consequences of Iwasawa's Theorem. Interesting open topics are the
occurance of floating elements and their relation to the splitting in the
sequence \rf{bsplit} and possible intersections of $\Lambda$-maximal
modules. It conceivable that a better understanding of these facts may allow
to extend our methods to the study of arbitrary $\Lambda$-cyclic submodules of
$\rg{A}$. It will probably be also a matter of taste, to estimate whether the
detail of the work that such generalizations may require can be expected to be
compensated by sufficiently simple and structured final results.
\vspace*{0.3cm}

\textbf{Acknowledgment:} The material of this paper grew,
was simplified and matured over a long period of time, first as a central
target, then as a byproduct of deeper investigations in Iwasawa's theory. I
thank all my colleagues at the Mathematical Institute of the University of
G\"ottingen for their support in this time. I thank Victor Vuletescu who
actively helped the development of the ideas during the time of the Volkswagen
Foundation grant and Tobias Bembom and Gabriele Ranieri for their critical
reading and active help with the completion of this paper.

Last but not least, the questions discussed here were the first
building block of a series of Seminar lectures held together with
S. J. Patterson in the years 2007-08, the notes of which will appear
in the sequel in the series entitled SNOQIT: {\em Seminar Notes on
  Open Questions in Iwasawa Theory}. I thank Paddy for the
inspirational discussions we had since that time.
\bibliographystyle{abbrv} \bibliography{leq-genv2}

\end{document}